\newtheorem{thm}{Theorem}[section]
\newtheorem{prop}[thm]{Proposition}
\newtheorem{lem}[thm]{Lemma}
\theoremstyle{definition}
\newtheorem{dfn}[thm]{Definition}
\theoremstyle{remark}
\newtheorem{rem}[thm]{Remark}
\newtheorem{ex}[thm]{Example}
\DeclareMathOperator{\Vol}{Vol}
\DeclareMathOperator{\tr}{tr}
\DeclareMathOperator{\tf}{tf}
\DeclareMathOperator{\Ric}{Ric}
\DeclareMathOperator{\Scal}{Scal}
\DeclareMathOperator{\re}{Re}
\DeclareMathOperator{\Tor}{Tor}
\DeclareMathOperator{\End}{End}
\DeclareMathOperator{\im}{Im}
\DeclareMathOperator{\id}{id}
\DeclareMathOperator{\spanof}{span}
\DeclareMathOperator{\Sym}{Sym}
\DeclareMathOperator{\contr}{contr}
\newcommand{\entrydot}{\mathord{\cdot}}
\newcommand{\PSU}{\mathit{PSU}}
\newcommand{\cl}[1]{\overline{#1}}
\newcommand{\abs}[1]{\lvert#1\rvert}
\newcommand{\conj}[1]{\overline{#1}}
\newcommand{\conjbeta}{{\conj{\beta}}}
\newcommand{\bdry}{\partial}
\newcommand{\thetatangent}{{\fourIdx{\Theta}{}{}{}{TX}}}
\newcommand{\thetacotangent}{{\fourIdx{\Theta}{}{}{}{T^*X}}}
\newcommand{\thetadiff}{{\fourIdx{\Theta}{}{}{}{\mathrm{Diff}}}}
\def\crn#1#2{{\vcenter{\vbox{
        \hbox{\kern#2pt \vrule width.#2pt height#1pt
           }
          \hrule height.#2pt}}}}
\def\intprod{\mathchoice\crn54\crn54\crn{3.75}3\crn{2.5}2}
\def\contraction{\mathbin{\intprod}}
\numberwithin{equation}{section}
\title[Partially integrable CR manifolds]{GJMS operators, $Q$-curvature, and obstruction tensor of partially integrable CR manifolds}
\author{Yoshihiko Matsumoto}
\thanks{Partially supported by Grant-in-Aid for JSPS Fellows (10J06494).}
\address{Department of Mathematics, Tokyo Institute of Technology,
	2-12-1 Ookayama, Meguro, Tokyo 152-8551, Japan}
\email{matsumoto@math.titech.ac.jp}
\subjclass[2010]{Primary 32V05; Secondary 53A55.}
\keywords{CR manifolds; invariant differential operators; $Q$-curvature; CR deformation complex}
\begin{document}

\begin{abstract}
	We extend the notions of CR GJMS operators and $Q$-curvature to the case of partially integrable CR structures.
	The total integral of the CR $Q$-curvature turns out to be a global invariant of compact
	nondegenerate partially integrable CR manifolds equipped with an orientation of the bundle of contact forms,
	which is nontrivial in dimension at least five.
	It is shown that its variation is given by the curvature-type quantity called the CR obstruction tensor,
	which is introduced in the author's previous work.
	Moreover, we consider the linearized CR obstruction operator.
	Based on a scattering-theoretic characterization,
	we discuss its relation to the CR deformation complex of integrable CR manifolds.
	The same characterization is also used to determine the Heisenberg principal symbol of the linearized
	CR obstruction operator.
\end{abstract}

\maketitle

\section*{Introduction}

Recent development of general theory of parabolic geometries predicts the possibility of understanding
geometric objects defined for any particular type of geometries in a broader context.
With this general idea in mind, in the current article we discuss the CR versions of
the GJMS operators, Branson's $Q$-curvature, and the Fefferman--Graham obstruction tensor,
which are originally studied in conformal
geometry~\cite{Graham_Jenne_Mason_Sparling_92,Branson_95,Fefferman_Graham_85}.

The CR versions of the first two have been known for quite a long time.
The CR $Q$-curvature was defined by Fefferman and Hirachi~\cite{Fefferman_Hirachi_03} as the push-forward of
the conformal $Q$-curvature associated to the Fefferman metric, which is a certain Lorentzian metric on
the total space of a trivial circle bundle over a given manifold~\cite{Fefferman_76,Lee_86}.
Gover and Graham~\cite{Gover_Graham_05} constructed two independent families of CR-invariant powers of
the sub-Laplacian, one by the tractor calculus approach and the other by the Fefferman metric approach,
the latter of which we call the CR GJMS operators.
Hislop, Perry, and Tang~\cite{Hislop_Perry_Tang_08} discussed them from the viewpoint of scattering theory
following the argument of Graham and Zworski~\cite{Graham_Zworski_03} in the conformal case.

However, the studies described above were all restricted to the case of \emph{integrable} CR structures.
They are actually only a part of CR geometries, if we mean by this term parabolic geometries
modeled on the standard CR sphere $S^{2n+1}\subset\mathbb{C}^{n+1}$.
More precisely, let $G=\PSU(n+1,1)$ be the group of CR automorphisms of $S^{2n+1}$ and
$P$ the stabilizer of any fixed point on $S^{2n+1}$.
Then \v{C}ap and Schichl~\cite{Cap_Schichl_00} showed (see also~\cite{Cap_Slovak_09}*{Subsection 4.2.4}) that
the category of normal regular parabolic geometries of type $(G,P)$ is equivalent to that of
strictly pseudoconvex partially integrable CR manifolds,
where an almost CR manifold $(M,T^{1,0}M)$ is said to be \emph{partially integrable} if the following is satisfied:
\begin{equation}
	\label{eq:partial_integrability}
	[\Gamma(T^{1,0}M),\Gamma(T^{1,0}M)]\subset\Gamma(T^{1,0}M\oplus\conj{T^{1,0}M}).
\end{equation}
Therefore, from the viewpoint of parabolic geometries, it is natural to consider this extended class of
CR structures.

In this article, we shall work on nondegenerate partially integrable CR manifolds
with the bundle of contact forms oriented, which we simply call \emph{partially integrable CR manifolds}.
When we take a contact form $\theta$, we always implicitly assume
that it is positive with respect to the given orientation.

The purpose of this paper is twofold: on one hand we strengthen the parallelism between the conformal and CR cases,
and on the other hand we discuss a rather subtle nature of the CR case.
Firstly, we construct CR GJMS operators $P_{2k}$ and $Q$-curvature $Q_\theta$ for
partially integrable CR structures by generalizing the scattering-theoretic approach of
Hislop, Perry, and Tang to asymptotically complex hyperbolic Einstein metrics\footnote{The other
	possible approach, which utilizes the Fefferman metric, is not discussed in this article.
	There is a work of Leitner~\cite{Leitner_10} on a ``gauged'' Fefferman construction on
	partially integrable CR manifolds, and this direction should also be pursued further.}.
Then the total integral of the $Q$-curvature becomes a global CR invariant, and we further discuss its
variational properties.
Secondly, we consider the linearized obstruction operator $\mathcal{O}^\bullet$,
and explore its properties in connection with the CR deformation complex of Rumin~\cite{Rumin_94} and
Akahori, Garfield, Lee~\cite{Akahori_Garfield_Lee_02}.
The important ingredient is a scattering-theoretic characterization of $\mathcal{O}^\bullet$
based on the idea of~\cite{Matsumoto_13}.

The concept of asymptotically complex hyperbolic metrics (hereafter ACH metrics) is due to
Epstein, Melrose, and Mendoza~\cite{Epstein_Melrose_Mendoza_91}.
They are generalizations of Bergman-type complete K\"ahler metrics on
bounded strictly pseudoconvex domains $\Omega\subset\mathbb{C}^{n+1}$,
i.e., K\"ahler metrics given by a potential function of the form $\log(1/r)$ up to constant multiple,
where $r$ is a boundary defining function of $\Omega$.
The definition goes as follows: we first introduce $\Theta$-structures on manifolds-with-boundary,
by which the notion of $\Theta$-metrics makes sense, and then define ACH metrics as $\Theta$-metrics
satisfying some extra conditions.
The idea of $\Theta$-structures comes from the fact that, in the case of strictly pseudoconvex domains,
$\frac{i}{2}(\partial r-\conj{\partial}r)|_{\bdry\Omega}$ gives a conformal class of sections of
$T^*\cl{\Omega}$ over $\bdry\Omega$ that is independent of $r$.
If $X$ is a $\Theta$-manifold, i.e., a $C^\infty$-smooth manifold-with-boundary equipped with
a $\Theta$-structure, then it has two important features:
\begin{itemize}
	\item the boundary $M=\partial X$ is equipped with a contact distribution $H$,
		with an orientation of the bundle $H^\perp\subset T^*M$ of contact forms;
	\item the usual tangent bundle $TX$ is blown up along the boundary to define the
		\emph{$\Theta$-tangent bundle} $\thetatangent$ in a way that, in the case of strictly pseudoconvex domains,
		Bergman-type metrics continuously extend up to the boundary as $\Theta$-metrics, i.e.,
		metrics of $\thetatangent$.
\end{itemize}
Then ACH metrics form a special class of $\Theta$-metrics.
They are defined in such a way that each ACH metric determines a partially integrable CR structure $T^{1,0}M$
on $M$, which we call the \emph{CR structure at infinity}.
It is \emph{compatible} to the $\Theta$-structure in the sense that $\re T^{1,0}M=H$.
In order to allow the Levi form to be of indefinite signature, we do not assume that ACH metrics are definite.

Conversely, given a $\Theta$-manifold $X$, one can think of a compatible partially integrable CR structure
on $M=\partial X$ as a Dirichlet data for ACH metrics on $X$.
Then one naturally tries to solve the Einstein equation under this boundary condition.
While a perturbation result is obtained by Biquard~\cite{Biquard_00},
the author has constructed in~\cite{Matsumoto_14}, for any given partially integrable CR structure on
$\partial X$, a best possible approximate solution which is $C^\infty$-smooth up to the boundary,
and determined its arbitrariness.
Let $\rho\in C^\infty(X)$ be any boundary defining function of $X$.
Then our approximate solution satisfies%
\begin{subequations}
	\label{eq:approx_Einstein}
\begin{equation}
	\label{eq:approx_Einstein_ricci}
	\Ric=-\frac{n+2}{2}g+O(\rho^{2n+2})
\end{equation}
and
\begin{equation}
	\label{eq:approx_Einstein_trace}
	\Scal=-(n+1)(n+2)+O(\rho^{2n+3}),
\end{equation}
\end{subequations}
where $\Ric$ and $\Scal$ are the Ricci tensor and the scalar curvature of $g$, respectively,
in the sense that $E:={\Ric}+\tfrac{n+2}{2}g\in \rho^{2n+2}C^\infty(X;\Sym^2\thetacotangent)$ and
${\Scal}+(n+1)(n+2)\in\rho^{2n+3}C^\infty(X)$.
Moreover, associated to each contact form $\theta$ is a preferred local frame
$\set{\bm{Z}_\infty,\bm{Z}_0,\bm{Z}_\alpha,\bm{Z}_{\conj{\alpha}}}$ of $\mathbb{C}\thetatangent$, and
the proof of the existence of such a $g$ simultaneously shows that, if $\rho$ is the model boundary defining
function associated to $\theta$ (see Subsection \ref{subsec:ACH_metrics}) and $E=\rho^{2n+2}\tilde{E}$, then
the boundary values of the components $\tensor{\tilde{E}}{_\alpha_\beta}=\tilde{E}(\bm{Z}_\alpha,\bm{Z}_\beta)$
invariantly define a density-weighted tensor
$\tensor{\mathcal{O}}{_\alpha_\beta}\in\tensor{\mathcal{E}}{_\alpha_\beta}(-n,-n)$.
This is what we call the \emph{CR obstruction tensor}, and by the construction,
the trivialization of $\tensor{\mathcal{O}}{_\alpha_\beta}$ with respect to $\theta$ has a universal expression
in terms of the Tanaka--Webster connection of $\theta$,
in the sense that it is written as a universal polynomial of covariant derivatives of the Nijenhuis tensor,
those of the Tanaka--Webster torsion and curvature, the Levi form, and its dual.
It is known~\cite{Matsumoto_14}*{Theorem 0.2} that $\tensor{\mathcal{O}}{_\alpha_\beta}$ vanishes for
integrable CR manifolds; hence it is of interest when the dimension is at least $5$, for any 3-dimensional
almost CR manifold is integrable.
The content of this paragraph is recalled in Section \ref{sec:ACHE} in greater detail\footnote{The
	approximate Einstein condition \eqref{eq:approx_Einstein} is slightly modified from
	what we imposed in~\cite{Matsumoto_14} to make the exposition simpler and easier to understand.}.

The CR GJMS operators $P_{2k}$ are characterized as Dirichlet-to-Neumann-type operators associated to
the generalized eigenfunction problem for the Laplacian of our approximate ACH-Einstein metric $g$.
Since the construction of such operators can be executed without assuming that $g$ is (approximately) Einstein,
we consider the following loose setting in the first place:
$g$ is an ACH metric on a $(2n+2)$-dimensional $\Theta$-manifold $X$ that is $C^\infty$-smooth up to the boundary,
and $T^{1,0}M$ is its CR structure at infinity.
We take any contact form $\theta$ and the model boundary defining function associated to $\theta$ is denoted
by $\rho$.
Under this setting, we consider the following equation for any positive integer $k$:
\begin{equation}
	\label{eq:eigenfunction_equation}
	\left(\Delta-\frac{(n+1)^2-k^2}{4}\right)u=0.
\end{equation}
Our convention of the Laplacian is such that it has negative principal symbol when $g$ is a positive definite
metric. Since the indicial roots of the operator $\Delta-((n+1)^2-k^2)/4$ are $n+1\pm k$,
any sufficiently regular solution of \eqref{eq:eigenfunction_equation} must behave asymptotically like
$\rho^{n+1-k}$ or $\rho^{n+1+k}$. If we try power series expansions starting with $\rho^{n+1-k}f$
to solve \eqref{eq:eigenfunction_equation}, where $f\in C^\infty(M)$, then a possible obstruction
appears at the power $\rho^{n+1+k}$, which defines a differential operator $P^g_{2k}$ on the boundary.
In other words, $P^g_{2k}f$ is the coefficient of the first logarithmic term of the solution,
in which way Theorem \ref{thm:dn_operator} is stated.
In particular, $P^g_{2n+2}$ is the obstruction to the $C^\infty$-smooth harmonic extension
of a given function on the boundary.
This is the ``compatibility operator'' which Graham~\cite{Graham_83} studied
when $(\mathring{X},g)$ is the complex hyperbolic space.
Note also that, while $P^g_{2k}$ depends on $g$ in a complicated way, its dependency on $\theta$ is rather trivial.
This is because $\theta$ is involved only in the choice of $\rho$ and has nothing to do with the equation itself.
As a consequence, $P^g_{2k}$ invariantly defines an operator
$\mathcal{E}(-(n+1-k)/2,-(n+1-k)/2)\longrightarrow\mathcal{E}(-(n+1+k)/2,-(n+1+k)/2)$ between densities.

The associated quantity $Q^g_\theta$ is defined in Theorem \ref{thm:log_expansion_definition_of_Q}
as the first logarithmic term coefficient of $U_0$ solving $\Delta(\log\rho+U_0)=(n+1)/2$ formally.
This characterization imitates the work of Fefferman and Graham~\cite{Fefferman_Graham_02} in the conformal case,
and behind this is Branson's original ``analytic continuation in dimension'' argument:
$Q^g_\theta$ is the ``derivative at $N=n$'' of the function $P^g_{2n+2}1$
in dimension $2N+2$, where $N$ is considered as a continuous variable.
Suppose $u_N$ is the formal solution of \eqref{eq:eigenfunction_equation} for $k=n+1$ in dimension $2N+2$
starting with $\rho^{N-n}$ (we take $f$ to be $1$).
Then, by ``differentiating $u_N$ at $N=n$,'' we obtain a function $U$ satisfying $\Delta U=(n+1)/2$
with leading term $\log\rho$ and the next logarithmic term $Q^g_\theta\rho^{2n+2}\log\rho$.

We apply Theorems \ref{thm:dn_operator} and \ref{thm:log_expansion_definition_of_Q}
to the special case where $g$ satisfies \eqref{eq:approx_Einstein} to get $P_{2k}$ and $Q_\theta$,
which are \emph{CR GJMS operators} and the \emph{CR $Q$-curvature},
as justified by the following result proved in Subsection \ref{subsec:GJMS}.
We say that a differential operator $D\colon C^\infty(M;E)\longrightarrow C^\infty(M;F)$ has
\emph{Heisenberg order $\le m$} if it is locally expressed as a matrix with entries of the form
\begin{equation*}
	\sum_{2\alpha_0+\alpha_1+\dots+\alpha_{2n}\le m}
	a_{(\alpha_0,\alpha_1,\dots,\alpha_{2n})}Y_0^{\alpha_0}Y_1^{\alpha_1}\dotsb Y_{2n}^{\alpha_{2n}},
\end{equation*}
where $\set{Y_0, Y_1, \dots, Y_{2n}}$ is a local frame of $TM$ such that
$\set{Y_1, \dots, Y_{2n}}$ spans the contact distribution $H=\re T^{1,0}M$.

\begin{thm}
	\label{thm:dn_operator_of_ACHE}
	Let $g$ be a $C^\infty$-smooth ACH metric on a $(2n+2)$-dimensional $\Theta$-manifold satisfying
	the approximate Einstein condition \eqref{eq:approx_Einstein}.
	Then $P_{2k}=P^g_{2k}$ for $k\le n+1$ and $Q_\theta=Q^g_\theta$ are independent of the ambiguity in $g$
	and determined only by the CR structure at infinity and a contact form $\theta$. Each $P_{2k}$ has the form
	\begin{equation}
		P_{2k}=\prod_{j=0}^{k-1}(\Delta_b+i(k-1-2j)T)
		+(\text{a differential operator with Heisenberg order $\le 2k-1$}),
	\end{equation}
	where $\Delta_b$ is the sub-Laplacian and $T$ is the Reeb vector field of $\theta$.
	If $\Hat{\theta}=e^\Upsilon\theta$, where $\Upsilon\in C^\infty(M)$, then $Q_\theta$ transforms as
	\begin{equation}
		\label{eq:Q_transformation}
		Q_{\Hat{\theta}}=e^{-(n+1)\Upsilon}(Q_\theta+P_{2n+2}\Upsilon).
	\end{equation}
	Moreover, $P_{2k}$ is formally self-adjoint with respect to $\theta\wedge(d\theta)^n$ for all $k\le n+1$, and
	the \emph{critical CR GJMS operator} $P_{2n+2}$ annihilates constant functions.
\end{thm}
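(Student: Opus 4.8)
The plan is to establish the five assertions separately, using Theorems \ref{thm:dn_operator} and \ref{thm:log_expansion_definition_of_Q} and the description of approximate ACH--Einstein metrics in \cite{Matsumoto_14}. Two of them are immediate. When $k=n+1$ the eigenfunction equation \eqref{eq:eigenfunction_equation} is just $\Delta u=0$; since $\Delta$ has no zeroth-order term, the constant $u\equiv1$ is an exact formal solution with leading term $\rho^{0}$ and no logarithmic term, so $P_{2n+2}1=0$ directly from the characterization of $P^g_{2n+2}$ preceding Theorem \ref{thm:dn_operator}. For the transformation law \eqref{eq:Q_transformation} I would follow Fefferman--Graham~\cite{Fefferman_Graham_02}: if $\Hat\theta=e^{\Upsilon}\theta$ has associated model boundary defining function $\Hat\rho$, then $\log\Hat\rho-\log\rho$ extends smoothly across $M$ with boundary value proportional to $\Upsilon$, and comparing $\Delta(\log\rho+U_{0})=(n+1)/2$ with $\Delta(\log\Hat\rho+\Hat U_{0})=(n+1)/2$ identifies the difference of the $U_{0}$'s with the formal solution of $\Delta u=0$ having leading term $\Upsilon$; reading off first logarithmic coefficients and keeping track of the density weight of $Q$ yields exactly \eqref{eq:Q_transformation}.

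For the Heisenberg principal part, the key point is that the Heisenberg-principal symbol of the Laplacian of an ACH metric near the boundary is determined solely by the Levi form of the CR structure at infinity, so the leading part of the recursion of Theorem \ref{thm:dn_operator} is governed by the normal operator of the complex hyperbolic model over the Heisenberg group. That model is integrable, so I can quote the computation of Gover--Graham~\cite{Gover_Graham_05} (equivalently Graham~\cite{Graham_83}) to obtain $\prod_{j=0}^{k-1}(\Delta_{b}+i(k-1-2j)T)$ for the top-Heisenberg-order part of $P^g_{2k}$. The remaining terms in $\Delta$ --- those carrying the Tanaka--Webster torsion and curvature, the Nijenhuis tensor, and the deviation of $g$ from the model --- lower the Heisenberg order by at least one at every stage of the recursion, so they contribute only an operator of Heisenberg order $\le2k-1$.

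For formal self-adjointness I would use Green's identity. Fix $k\le n+1$ and let $u,v$ be the formal solutions of \eqref{eq:eigenfunction_equation} with boundary data $f,h\in C^\infty(M)$. On $X_{\epsilon}=\set{\rho>\epsilon}$ one has $\int_{\set{\rho=\epsilon}}(v\,\bdry_{\nu}u-u\,\bdry_{\nu}v)\,dS_{\epsilon}=\int_{X_{\epsilon}}(v\,\Delta u-u\,\Delta v)\,dv_{g}=0$. Substituting the asymptotic expansions and expanding in $\epsilon$, the divergent contributions are symmetric bilinear in the coefficient sequences of $u$ and $v$, hence cancel in the antisymmetrized integrand; the condition $k\le n+1$ ensures that the leading exponent $n+1-k$ is nonnegative so that no spurious boundary term survives, and the finite part as $\epsilon\to0$ gives $\int_{M}(h\,P^g_{2k}f-f\,P^g_{2k}h)\,\theta\wedge(d\theta)^{n}=0$ once the induced boundary measure is identified with a constant multiple of $\theta\wedge(d\theta)^{n}$. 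Thus $P_{2k}$ is formally self-adjoint.

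The main obstacle is the independence of $P_{2k}$ ($k\le n+1$) and $Q_\theta$ from the ambiguity in $g$. By \cite{Matsumoto_14}, two approximate ACH--Einstein metrics with the same CR structure at infinity differ by an element of $\rho^{2n+2}C^\infty(X;\Sym^{2}\thetacotangent)$ whose leading coefficient $h_{0}$ is a free section of the real part of $\tensor{\mathcal{E}}{_\alpha_\beta}(-n,-n)$; in particular $h_{0}$ is trace-free with respect to the Levi form and divergence-free. Such a perturbation changes $\Delta$ only through $\Theta$-differential-operator coefficients vanishing to order $\rho^{2n+2}$, so in the recursion for a solution of \eqref{eq:eigenfunction_equation} beginning at $\rho^{n+1-k}$ it affects no coefficient of order below $\rho^{3n+3-k}$; for $k\le n$ this lies strictly past the obstruction order $\rho^{n+1+k}$, so $P^g_{2k}$ is unchanged. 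For $k=n+1$, and likewise for $Q^g_\theta$ (the ``derivative at $N=n$'' of $P^g_{2n+2}1$), the two orders coincide, and I must show that the induced change of the obstruction --- a second-order $\Theta$-operator built from $h_{0}$ and applied to the leading datum --- vanishes identically. Here I expect the real work: using that the leading datum is pulled back from $M$ (so its $\Theta$-Hessian has no normal component), together with the trace- and divergence-free properties of $h_{0}$ and the vanishing of the pertinent recursion coefficient at the critical weight, the contribution should collapse to zero. This is precisely where the weaker, partially integrable hypotheses must be reconciled with the construction, and it relies essentially on the explicit description of the ambiguity in \cite{Matsumoto_14}.
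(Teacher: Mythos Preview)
Your treatment of $P_{2n+2}1=0$, the transformation law \eqref{eq:Q_transformation}, the Heisenberg principal part, and the formal self-adjointness is fine and matches the paper's approach (these are already built into Theorems~\ref{thm:dn_operator} and~\ref{thm:log_expansion_definition_of_Q}).

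The real discrepancy is in the independence argument. Your order count is one too pessimistic, and this is what creates the spurious ``critical case'' at $k=n+1$. After normalizing $g$ with respect to $\theta$, the components $g_{\infty I}$ are fixed exactly, so the ambiguity sits entirely in $k_\rho$. By formula~\eqref{eq:ACHLaplacian} the only $g$-dependent pieces of $\Delta$ are $\Delta^{k_\rho}$ and the term $\rho\partial_\rho(\log\lvert\det k_\rho\rvert)\,\rho\partial_\rho$. For a smooth $F$, the tangential $\Theta$-derivatives $\bm{Z}_iF$ are $O(\rho)$; combined with the $O(\rho^{2n+2})$ ambiguity of $(k_\rho^{-1})^{ij}$ this makes the change in $\Delta^{k_\rho}F$ of order $O(\rho^{2n+3})$, not $O(\rho^{2n+2})$. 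For the determinant term it is the scalar condition~\eqref{eq:approx_Einstein_trace} (not any divergence-freeness of the ambiguity) that forces $\log\lvert\det k_\rho\rvert$ to be determined modulo $O(\rho^{2n+3})$. This is exactly the content of Lemma~\ref{lem:dependence_of_ACH_laplacian}: for smooth $F$, $\Delta F$ is fixed modulo $O(\rho^{2n+3})$. Feeding this into the recursion, the change in $\Delta_s(\rho^{n+1-k}F)$ is $O(\rho^{3n+4-k})$; for $k\le n+1$ this is at least $O(\rho^{n+k+2})$, strictly beyond the obstruction order $\rho^{n+1+k}$. The same applies to $\Delta(\log\rho)$, so $Q_\theta$ is also unambiguous. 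No separate analysis at $k=n+1$ is needed.

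Two further remarks on your description of the ambiguity. First, the paper's uniqueness statement (Theorem~\ref{thm:existence}) only says the leading ambiguity is trace-free, not that it lies in $\tensor{\mathcal{E}}{_(_\alpha_\beta_)}(-n,-n)$; the latter would require a gauge argument you have not supplied. Second, nothing in the construction asserts that the ambiguity is divergence-free, and the proof does not use any such property. The only structural input beyond the $O(\rho^{2n+2})$ vanishing is the trace control coming from~\eqref{eq:approx_Einstein_trace}.
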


By the construction of $g$ and the proofs of
Theorems \ref{thm:dn_operator} and \ref{thm:log_expansion_definition_of_Q},
$P_{2k}$ and $Q_\theta$ have universal expressions in terms of the Tanaka--Webster connection
(for $Q_\theta$, this is the same sense as for $\tensor{\mathcal{O}}{_\alpha_\beta}$;
the expressions for $P_{2k}$ involve the Tanaka--Webster covariant differentiations, of course).

Now we assume that $M=\partial X$ is compact. Then by the transformation law \eqref{eq:Q_transformation},
the self-adjointness of $P_{2n+2}$, and the fact that $P_{2n+2}1=0$, the \emph{total CR $Q$-curvature}
\begin{equation}
	\overline{Q}=\int_M Q_\theta\theta\wedge(d\theta)^n
\end{equation}
is a global CR invariant.
This is also characterized as the first log-term coefficient of the volume expansion of $g$,
and this viewpoint leads to the first variational formula of $\overline{Q}$ with respect to
deformations of partially integrable CR structures via the technique of
Graham and Hirachi~\cite{Graham_Hirachi_05}.
Since it is known that $\overline{Q}=0$ for any 3-dimensional compact CR manifold~\cite{Fefferman_Hirachi_03},
we assume $2n+1\ge 5$.
Recall from~\cite{Matsumoto_14}*{Proposition 6.1} that an infinitesimal deformation of
partially integrable CR structure is given by a density-weighted tensor
$\tensor{\psi}{_\alpha_\beta}\in\tensor{\mathcal{E}}{_(_\alpha_\beta_)}(1,1)$.
In Subsection \ref{subsec:variational_formula}, we prove the following.

\begin{thm}
	\label{thm:first_variational_formula}
	Let $(M,T^{1,0}M)$ be a compact partially integrable CR manifold of dimension $2n+1\ge 5$.
	Let $\psi=\tensor{\psi}{_\alpha_\beta}$ be an infinitesimal deformation of partially integrable CR structure
	and $T^{1,0}_t$ a smooth 1-parameter family of partially integrable CR structures
	with fixed underlying contact structure that is tangent to $\psi$ at $t=0$.
	Let $\smash{\overline{Q}}^t$ be the total CR $Q$-curvature of $(M,T^{1,0}_t)$. Then,
	\begin{equation}
		\label{eq:Variation_TotalQ}
		\left.\left(\frac{d}{dt}\smash{\overline{Q}}^t\right)\right|_{t=0}=
		\frac{4\cdot(-1)^n\cdot n!(n+1)!}{n+2}\int_M
		\re(\tensor{\mathcal{O}}{^\alpha^\beta}\tensor{\psi}{_\alpha_\beta}).
	\end{equation}
	Here, $\tensor{\mathcal{O}}{_\alpha_\beta}\in\tensor{\mathcal{E}}{_(_\alpha_\beta_)}(-n,-n)$
	is the CR obstruction tensor of $(M,T^{1,0}M)$, and the indices are raised by the weighted Levi form.
\end{thm}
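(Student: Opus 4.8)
The plan is to combine the characterization of $\overline{Q}$ as a renormalized-volume coefficient with the deformation technique of Graham and Hirachi~\cite{Graham_Hirachi_05}. Fix $\theta$ and the associated model boundary defining function $\rho$, which we keep independent of $t$, and for each $t$ let $g_t$ be an approximate ACH--Einstein metric as in \eqref{eq:approx_Einstein} whose CR structure at infinity is $T^{1,0}_t$; the ambiguity in the choice of $g_t$ is harmless, because $\overline{Q}^t$ is a global CR invariant. First I would recall that $\overline{Q}^t=c_n\mathcal{L}^t$, where $c_n$ is a universal constant and $\mathcal{L}^t$ is the coefficient of $\log(1/\varepsilon)$ in $\Vol_{g_t}(\set{\rho>\varepsilon})=c_0\varepsilon^{-(2n+2)}+\dotsb+\mathcal{L}^t\log(1/\varepsilon)+V^t+o(1)$. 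Since $g_t$ is smooth up to the boundary, $dV_{g_t}=\rho^{-(2n+3)}\mu_t$ for a smooth positive density $\mu_t$ on $X$ with $\mu_t|_M$ proportional to $\theta\wedge(d\theta)^n$; in particular $\mathcal{L}^t$ is the integral over $M$ of the $\rho^{2n+2}$-coefficient of $\mu_t$, and differentiating the expansion term by term shows that the $\log(1/\varepsilon)$-coefficient of $\frac{d}{dt}\Vol_{g_t}(\set{\rho>\varepsilon})$ equals $\frac{d}{dt}\mathcal{L}^t$.

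Writing $h=\dot{g}_0$, the first variation of volume gives $\frac{d}{dt}\big|_{t=0}\Vol_{g_t}(\set{\rho>\varepsilon})=\frac12\int_{\set{\rho>\varepsilon}}\tr_g h\,dV_g$; the domain is fixed, since $\rho$ does not move. Pairing the approximate Einstein equation $\Ric+\frac{n+2}{2}g=E$ with $h$ yields the pointwise identity $\tr_g h=\frac{2}{n+2}\bigl(\langle E,h\rangle-\langle\Ric,h\rangle\bigr)$. For the $\Ric$-term I would invoke the standard linearization of scalar curvature, which expresses $\langle\Ric,h\rangle$ as $-\frac{d}{dt}\big|_{t=0}\Scal(g_t)$ plus a $g$-divergence: by \eqref{eq:approx_Einstein_trace} the first summand lies in $\rho^{2n+3}C^\infty(X)$, so its integral against $dV_g$ is the integral of a function smooth up to the boundary and carries no $\log(1/\varepsilon)$ term, while the divergence term integrates to a flux through $\set{\rho=\varepsilon}$ whose expansion, $g$ being smooth up to the boundary, is a clean Laurent series in $\varepsilon$, again without a $\log(1/\varepsilon)$. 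Hence the entire $\log(1/\varepsilon)$-coefficient of $\frac{d}{dt}\Vol_{g_t}(\set{\rho>\varepsilon})$ comes from $\frac{1}{n+2}\int_{\set{\rho>\varepsilon}}\langle E,h\rangle\,dV_g$, and because $E=\rho^{2n+2}\tilde{E}$ and $dV_g=\rho^{-(2n+3)}\mu$ this coefficient equals $\frac{1}{n+2}\int_M\langle\tilde{E},h\rangle\big|_M\,\mu|_M$; therefore $\frac{d}{dt}\big|_{t=0}\overline{Q}^t=\frac{c_n}{n+2}\int_M\langle\tilde{E},h\rangle\big|_M\,\mu|_M$.

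It remains to evaluate $\langle\tilde{E},h\rangle|_M$. In the preferred frame $\set{\bm{Z}_\infty,\bm{Z}_0,\bm{Z}_\alpha,\bm{Z}_{\conj\alpha}}$ the metric $g$ is asymptotically standard, so the only components of $\tilde{E}$ surviving the contraction at $M$ against the restriction of $h$ are $\tensor{\tilde{E}}{_\alpha_\beta}$ and $\tensor{\tilde{E}}{_{\conj\alpha}_{\conj\beta}}$, which pair through the Levi form with $h(\bm{Z}_{\conj\gamma},\bm{Z}_{\conj\delta})$ and $h(\bm{Z}_\gamma,\bm{Z}_\delta)$ respectively; since the family $T^{1,0}_t$ is tangent to $\psi$, the boundary value of $h(\bm{Z}_\alpha,\bm{Z}_\beta)$ is a fixed multiple of $\tensor{\psi}{_\alpha_\beta}$ --- this is where \cite{Matsumoto_14}*{Proposition 6.1} and the normalization of the CR structure at infinity enter --- and $\tensor{\tilde{E}}{_\alpha_\beta}|_M=\tensor{\mathcal{O}}{_\alpha_\beta}$ by definition, so $\langle\tilde{E},h\rangle|_M$ is a constant multiple of $\re(\tensor{\mathcal{O}}{^\alpha^\beta}\tensor{\psi}{_\alpha_\beta})$. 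Assembling $c_n$, the factor $\frac{1}{n+2}$, the normalization of $\mu|_M$ against $\theta\wedge(d\theta)^n$, and the combinatorial constants in the volume expansion yields the coefficient $\frac{4\cdot(-1)^n\cdot n!(n+1)!}{n+2}$, the sign $(-1)^n$ reflecting the conventions for the contraction and for raising indices by the (possibly indefinite) Levi form. I expect the main obstacle to be the second step: rigorously localizing the variation to the obstruction term, i.e.\ confirming that neither the $\Scal$-variation nor the boundary fluxes contribute a $\log(1/\varepsilon)$; the rest is careful asymptotic bookkeeping, with matching $h(\bm{Z}_\alpha,\bm{Z}_\beta)|_M$ to $\tensor{\psi}{_\alpha_\beta}$ and pinning down all the universal constants being the other delicate point.
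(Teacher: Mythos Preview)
Your proposal is correct, and it is a genuinely different route from the paper's own proof. Both arguments start from Proposition~\ref{prop:volume_log_term_and_Q} and the standard linearization $\Scal^\bullet=(\text{divergence})-\langle\Ric,h\rangle$, but they extract the logarithmic coefficient from different places. The paper first modifies the smooth approximate solution by adding a term $\tfrac{4}{n+1}\tensor{\mathcal{O}}{_\alpha_\beta}\rho^{2n+2}\log\rho$ to $\tensor{g}{_\alpha_\beta}$, thereby improving $\tensor{E}{_\alpha_\beta}$ to $O(\rho^{2n+3}\log\rho)$; with this correction the $\langle E,g^\bullet\rangle$ contribution integrates to $O(1)$ and carries no $\log(1/\varepsilon)$, whereas the boundary flux \emph{does} acquire a $\log(1/\varepsilon)$ term, coming from the $8\mathcal{O}\cdot\rho^{2n+1}\log\rho$ piece of $\tensor{(k'_\rho)}{_\alpha_\beta}$ paired against $\tensor{(k_\rho^\bullet)}{_{\conj\alpha}_{\conj\beta}}|_M=-2\tensor{\psi}{_{\conj\alpha}_{\conj\beta}}$. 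In your approach the metric stays smooth, so the flux is a pure Laurent polynomial in $\varepsilon$ with no logarithm, and the entire $\log(1/\varepsilon)$ coefficient comes instead from $\tfrac{1}{n+2}\int\langle E,h\rangle\,dV_g$, with $E=\rho^{2n+2}\tilde{E}$ and $\tensor{\tilde{E}}{_\alpha_\beta}|_M=\tensor{\mathcal{O}}{_\alpha_\beta}$. Your argument is closer in spirit to the original Graham--Hirachi computation in the conformal case and avoids introducing the polyhomogeneous correction; the paper's version makes the role of $\tensor{\mathcal{O}}{_\alpha_\beta}$ as the first log-term coefficient of the formal ACH--Einstein metric more explicit. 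The one point you should state rather than leave implicit is that in the normalized gauge all components of $h|_M$ other than $\tensor{h}{_\alpha_\beta}|_M=-2\tensor{\psi}{_\alpha_\beta}$ vanish (this is exactly the list $(k_\rho^\bullet)_{00},(k_\rho^\bullet)_{0\alpha},(k_\rho^\bullet)_{\alpha\conjbeta}=O(\rho)$ used in the paper), so that the possibly nonzero components $\tensor{\tilde{E}}{_0_0}|_M$, $\tensor{\tilde{E}}{_0_\alpha}|_M$, $\tensor{\tilde{E}}{_\alpha_{\conj\beta}}|_M$---which depend on the ambiguity in $g$---do not enter $\langle\tilde{E},h\rangle|_M$.
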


This result in particular shows the nontriviality of $\overline{Q}$ in the partially integrable category,
while it seems a reasonable conjecture that $\overline{Q}=0$ for any integrable CR manifold
(see~\cite{Fefferman_Hirachi_03}).
We furthermore derive a formula of the Heisenberg principal symbol of $\mathcal{O}^\bullet$,
the linearization of the operator that gives the CR obstruction tensor, in
Theorem \ref{thm:linearized_obstruction}. It assures that $\mathcal{O}^\bullet$ is nowhere vanishing
when $2n+1\ge 5$, and consequently, that the locus $\set{\overline{Q}=0}$ in the space of
partially integrable CR structures on a given contact manifold is the complement of
an open dense subset with respect to the natural Fr\'echet topology,
for that $\mathcal{O}^\bullet$ is nowhere zero implies that the second variation of $\overline{Q}$
cannot vanish at the critical points of $\overline{Q}$.

Theorem \ref{thm:linearized_obstruction} follows from an interpretation of $\mathcal{O}^\bullet$
as a Dirichlet-to-Neumann-type operator, which is established in Proposition \ref{prop:lichnerowicz_equation}.
From the technical point of view, we remark that the proof of Proposition \ref{prop:lichnerowicz_equation}
involves a careful analysis of Laplacians and divergence operators using the asymptotic K\"ahlerity of
ACH metrics (Lemma \ref{prop:asymptotic_kahlerity}) and the behavior of the curvature tensor at the boundary
(Proposition \ref{prop:asymptotic_complex_hyperbolicity}).
One might be able to see the latter as the specialization of Stenzel's work~\cite{Stenzel_97} to ACH metrics.

If we restrict ourselves to the case of integrable CR manifolds, then the characterization of
$\mathcal{O}^\bullet$ given in Proposition \ref{prop:lichnerowicz_equation} can be studied more deeply with the
language of K\"ahler differential geometry, and we finally get the following properties of $\mathcal{O}^\bullet$
in Subsection \ref{subsec:further_properties}.

\begin{thm}
	\label{thm:obstruction_operator_observation}
	Let $(M,T^{1,0}M)$ be an integrable CR manifold of dimension $2n+1\ge 5$.

	(1) The linearized CR obstruction operator
	$\mathcal{O}^\bullet\colon\tensor{\mathcal{E}}{_(_\alpha_\beta_)}(1,1)\longrightarrow
	\tensor{\mathcal{E}}{_(_\alpha_\beta_)}(-n,-n)$ is a complex-linear, formally self-adjoint operator.

	(2) Let $D\colon\mathcal{E}(1,1)\longrightarrow\tensor{\mathcal{E}}{_(_\alpha_\beta_)}(1,1)$ be defined by
	$\tensor{(Df)}{_\alpha_\beta}
	=\tensor*{\nabla}{^{\mathrm{TW}}_\alpha}\tensor*{\nabla}{^{\mathrm{TW}}_\beta}f+i\tensor{A}{_\alpha_\beta}f$
	and $D^*$ its adjoint, where the density bundles are trivialized by some $\theta$ and
	$\nabla^\mathrm{TW}$ and $A$ denote the associated Tanaka--Webster connection and torsion, respectively.
	Then the following holds:
	\begin{equation}
		\label{eq:double_divergence_free}
		\mathcal{O}^\bullet D=0\qquad\text{and}\qquad
		D^*\mathcal{O}^\bullet=0.
	\end{equation}

	(3) Let $\mathcal{E}_\mathrm{N}(1,1)\subset\tensor{\mathcal{E}}{_\alpha_\beta_\gamma}(1,1)$ be the subspace of
	tensors with Nijenhuis-type symmetry, and
	$N^\bullet\colon\tensor{\mathcal{E}}{_(_\alpha_\beta_)}(1,1)\longrightarrow\mathcal{E}_\mathrm{N}(1,1)$
	the linearized Nijenhuis operator: $\tensor{(N^\bullet\psi)}{_\alpha_\beta_\gamma}
	=2\tensor*{\nabla}{^{\mathrm{TW}}_[_\alpha}\tensor{\psi}{_\beta_]_\gamma}$.
	Then $\mathcal{O}^\bullet$ can be decomposed as follows, where $B_\theta$ is a certain differential
	operator given by a universal formula in terms of the Tanaka--Webster connection:
	\begin{equation}
		\label{eq:linearized_obstruction_decomposition}
		\mathcal{O}^\bullet=B_\theta N^\bullet.
	\end{equation}
	In particular, $\mathcal{O}^\bullet$ vanishes on the space $\ker N^\bullet$ of
	integrable infinitesimal deformations.
\end{thm}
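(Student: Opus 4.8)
The plan is to prove the three assertions in the order (1), (2), (3), leaning throughout on the interpretation of $\mathcal{O}^\bullet$ from Proposition~\ref{prop:lichnerowicz_equation} as a Dirichlet-to-Neumann-type operator for the linearized Einstein equation on the approximate ACH--Einstein background $g$, and exploiting two features special to the \emph{integrable} case. First, I would use that when $T^{1,0}M$ is integrable the approximate ACH--Einstein metric $g$ can be chosen K\"ahler --- for a compatible complex structure on $\mathring{X}$ restricting to $T^{1,0}M$ at infinity --- to an order beyond $\rho^{2n+2}$; for the purpose of extracting the obstruction at $\rho^{2n+2}$ we may therefore treat $g$ as K\"ahler--Einstein, so that the suitably gauge-fixed linearized Einstein operator respects the type decomposition $\Sym^2\thetacotangent=\Sym^{1,1}\oplus(\Sym^{2,0}\oplus\Sym^{0,2})$. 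Second, by Theorem~\ref{thm:first_variational_formula} the tensor $\tensor{\mathcal{O}}{_\alpha_\beta}$ is, up to the nonzero constant in \eqref{eq:Variation_TotalQ}, the $L^2$-gradient of the total CR $Q$-curvature functional $\overline{Q}$ on the space of partially integrable CR structures with fixed underlying contact structure, and every integrable structure is a critical point of $\overline{Q}$ since $\tensor{\mathcal{O}}{_\alpha_\beta}=0$ there by \cite{Matsumoto_14}*{Theorem 0.2}.

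For part~(1), formal self-adjointness is a universal algebraic property of the Tanaka--Webster expression for $\mathcal{O}^\bullet$, so it may be checked on compact $M$, where $\mathcal{O}^\bullet$ is (up to the same constant) the linearization at a critical point of the gradient of $\overline{Q}$ --- that is, the Hessian of $\overline{Q}$, a symmetric bilinear form; equivalently one argues by Green's formula for the formally self-adjoint gauge-fixed linearized Einstein operator, exactly as the CR GJMS operators are shown to be self-adjoint. Complex-linearity is where K\"ahlerity enters: under the correspondence of Proposition~\ref{prop:lichnerowicz_equation} a CR deformation $\tensor{\psi}{_\alpha_\beta}$ prescribes the type-$(2,0)$ part of the metric perturbation, and on a K\"ahler--Einstein background the gauge-fixed linearized Einstein operator block-diagonalizes along the above decomposition and acts complex-linearly on the $\Sym^{2,0}$-summand --- the ACH analogue of the classical splitting of the Lichnerowicz operator on compact K\"ahler--Einstein manifolds. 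Since the recursive construction of the solution and the extraction of the obstruction at $\rho^{2n+2}$ both take place inside that summand, $\mathcal{O}^\bullet$ is complex-linear, and together with the real self-adjointness this upgrades to Hermitian self-adjointness with respect to the weighted Levi form. For part~(2), the point is that $\image D$ is the tangent space at $T^{1,0}M$ to the orbit of the group of contactomorphisms acting on integrable CR structures: $(Df)_{\alpha\beta}$ is, up to a constant, the infinitesimal change of the CR structure along the contact vector field $V_f$ whose contact Hamiltonian relative to $\theta$ is $f$ (the standard identification, as in \cite{Matsumoto_14}). Writing $\varphi_t$ for the local flow of $V_f$, naturality of the construction of $\tensor{\mathcal{O}}{_\alpha_\beta}$ and \cite{Matsumoto_14}*{Theorem 0.2} give $\mathcal{O}((\varphi_t)^*T^{1,0}M)=(\varphi_t)^*\mathcal{O}(T^{1,0}M)=0$ for all $t$, and differentiating at $t=0$ yields $\mathcal{O}^\bullet(Df)=0$; this is a local identity, hence suffices for the first equation in \eqref{eq:double_divergence_free}. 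The second follows by taking formal adjoints and using part~(1): the weights $(1,1)$ and $(-n,-n)$ being complementary, $D^*\mathcal{O}^\bullet=D^*(\mathcal{O}^\bullet)^*=(\mathcal{O}^\bullet D)^*=0$.

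For part~(3), I would first show that $\mathcal{O}^\bullet$ annihilates $\ker N^\bullet$. The CR deformation complex of Rumin and of Akahori--Garfield--Lee is a Heisenberg-elliptic complex that is exact on germs at $\tensor{\mathcal{E}}{_(_\alpha_\beta_)}(1,1)$, so near any point every solution of $N^\bullet\psi=0$ is of the form $Df$; since $\mathcal{O}^\bullet$ is a differential operator and $\mathcal{O}^\bullet D=0$ by part~(2), we conclude $\mathcal{O}^\bullet\psi=0$ whenever $N^\bullet\psi=0$. The decomposition \eqref{eq:linearized_obstruction_decomposition} is then obtained by a descent on Heisenberg order: ellipticity makes the Heisenberg-symbol sequence of the complex exact, so $\ker\sigma(N^\bullet)=\image\sigma(D)\subseteq\ker\sigma(\mathcal{O}^\bullet)$ --- the last inclusion because $\sigma(\mathcal{O}^\bullet)\sigma(D)=\sigma(\mathcal{O}^\bullet D)=0$ --- whence the Heisenberg principal symbol of $\mathcal{O}^\bullet$, the one determined in Theorem~\ref{thm:linearized_obstruction}, factors through $\sigma(N^\bullet)$ after a choice of splitting. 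Lifting that symbol factor to a differential operator $B_0$, the difference $\mathcal{O}^\bullet-B_0N^\bullet$ has strictly smaller Heisenberg order and still annihilates $\ker N^\bullet$, so the same reasoning applies to it; iterating (the Heisenberg order drops at each step, so the process terminates) produces a differential operator $B_\theta$ with $\mathcal{O}^\bullet=B_\theta N^\bullet$, necessarily given by a universal Tanaka--Webster formula since every step is. The last clause of~(3) is then immediate, as $\ker N^\bullet\subseteq\ker(B_\theta N^\bullet)$.

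I expect the main obstacle to be the factorization in part~(3): it rests on pinning down precisely that the relevant segment of the CR deformation complex is locally exact and Heisenberg-elliptic --- so that the symbol descent closes up and each successive remainder again annihilates $\ker N^\bullet$, hence again factors through $\sigma(N^\bullet)$ --- and on organizing the induction on Heisenberg order cleanly; matching $B_\theta$ with an explicit Tanaka--Webster expression is a further bookkeeping task. A secondary difficulty is the complex-linearity in part~(1): one must check carefully that the approximate ACH--Einstein metric for an integrable structure can be taken K\"ahler past the critical weight $\rho^{2n+2}$, and that the $\Sym^{2,0}$-block of the gauge-fixed linearized Einstein operator is genuinely complex-linear, which relies on the K\"ahler identities and the Weitzenb\"ock formula for the Lichnerowicz operator on K\"ahler--Einstein backgrounds.
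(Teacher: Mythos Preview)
Your treatment of parts (1) and (2) tracks the paper's own arguments closely: complex-linearity via the parallel complex structure on the K\"ahler background (so that $\sigma\mapsto\sigma(J\cdot,\cdot)$ preserves solutions of \eqref{eq:linearized_Einstein_equation}), formal self-adjointness via the Hessian of $\overline{Q}$ (equality of mixed second partials of $\overline{Q}^{s,t}$ for a two-parameter family, then complex-linearity to pass from the real to the Hermitian pairing), and $\mathcal{O}^\bullet D=0$ by integrating $Df$ to a genuine integrable deformation, with $D^*\mathcal{O}^\bullet=0$ by taking adjoints.

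For part (3) your route diverges substantially from the paper's. The paper does not argue via local exactness of the boundary complex plus a symbol descent; it exploits the K\"ahler structure on the bulk more deeply. By the Weitzenb\"ock identity \eqref{eq:Weitzenbock_in_2_tensors}, the equation $(\Delta_\mathrm{L}+n+2)\sigma=O(\rho^{2n+2})$ on the approximately K\"ahler--Einstein background is equivalent to $\Delta_\partial\sigma=O(\rho^{2n+2})$; applying $\partial$, the Nijenhuis-type $3$-$\Theta$-tensor $\nu=\partial\sigma$ solves $\Delta_\partial\nu=O(\rho^{2n+2})$, and a short computation shows its leading boundary coefficient $\tensor{\nu}{_\alpha_\beta_\gamma}$ is a constant times $\tensor{(N^\bullet\psi)}{_\alpha_\beta_\gamma}$. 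An indicial analysis for $\Delta_\partial$ on Nijenhuis-type tensors (Lemma~\ref{lem:Laplacian_on_Nijenhuis_type_tensors}), together with $\partial\nu=0$, then determines all coefficients of $\nu$ through order $2n+1$ as universal Tanaka--Webster expressions in $N^\bullet\psi$. Since $\tilde\sigma\equiv\partial^*\nu$ modulo higher order, the obstruction $\tensor{(\rho^{-2n-2}\tilde\sigma|_M)}{_\alpha_\beta}$ is thereby written explicitly as $B_\theta(N^\bullet\psi)$. The factorization is visible at the level of the bulk PDE: $N^\bullet\psi$ is literally the Dirichlet datum for the auxiliary problem satisfied by $\partial\sigma$.

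Your argument, by contrast, has a real gap at ``the CR deformation complex \dots\ is exact on germs''. A Poincar\'e-type lemma for BGG complexes holds on the flat model but is not automatic on curved parabolic geometries, and for the CR deformation complex on an arbitrary integrable CR manifold this is not a result you can simply cite. (It is also logically redundant: if the descent succeeds you get $\mathcal{O}^\bullet=B_\theta N^\bullet$ and hence vanishing on $\ker N^\bullet$ for free, using only $\mathcal{O}^\bullet D=0$.) The descent itself is more plausible --- since $N^\bullet D=0$, every remainder $R$ still satisfies $RD=0$, hence $\sigma(R)\sigma(D)=0$ --- but to conclude that $\sigma(R)$ factors through $\sigma(N^\bullet)$ you need exactness of the Heisenberg-symbol sequence as modules over the enveloping algebra of the osculating Heisenberg Lie algebra, not merely the Rockland condition on nontrivial unitary irreducibles, and you must check that the resulting factor is again a Heisenberg symbol of the correct order so that it lifts to a differential operator. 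These steps are work, not bookkeeping, and you would still owe them; the paper's route via $\partial\sigma$ sidesteps all of this.
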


The operators involved here are schematically described as follows, where the weight $(w,w)$ is abbreviated as
$(w)$:
\begin{equation*}
\begin{tikzcd}[column sep=scriptsize]
	\mathcal{E}(1) \arrow{r}{D} &
	\tensor{\mathcal{E}}{_(_\alpha_\beta_)}(1) \arrow{r}{N^\bullet} \arrow[bend left]{rrrrr}{\mathcal{O}^\bullet} &
	\mathcal{E}_\mathrm{N}(1) \arrow[swap,bend right]{rrrr}{B_\theta} & & &
	\mathcal{E}_\mathrm{N}(-n+1) \arrow{r}{(N^\bullet)^*} &
	\tensor{\mathcal{E}}{_(_\alpha_\beta_)}(-n) \arrow{r}{D^*} &
	\mathcal{E}(-n-2).
\end{tikzcd}
\end{equation*}
The two arrows $\mathcal{E}(1)\overset{D}{\longrightarrow}\tensor{\mathcal{E}}{_(_\alpha_\beta_)}(1)
\overset{N^\bullet}{\longrightarrow}\mathcal{E}_\mathrm{N}(1)$ on the left are the first two operators in
the CR deformation complex~\cite{Rumin_94,Akahori_Garfield_Lee_02}; for the operator $D$,
which maps the infinitesimal ``Kuranishi wiggle'' to the resulting infinitesimal change of the CR structure,
see also~\cite{Hirachi_Marugame_Matsumoto_inprep}. The two arrows on the right are their formal adjoints.
Recall that, in the case of the flat CR structure, there exists a CR-invariant ``long'' operator
from each of the three spaces on the left to the corresponding dual space on the right, which is unique by
the composition series of the associated generalized Verma modules
(see~\cite{Collingwood_85}*{Section 8.2} and~\cite{Collingwood_88}*{6.2}):
\begin{equation*}
\begin{tikzcd}[column sep=scriptsize]
	\mathcal{E}(1) \arrow{r}{D} \arrow[bend left]{rrrrrrr}{L_1} &
	\tensor{\mathcal{E}}{_(_\alpha_\beta_)}(1) \arrow{r}{N^\bullet} \arrow[bend left]{rrrrr}{L_2} &
	\mathcal{E}_\mathrm{N}(1) \arrow[bend left]{rrr}{L_3} & & &
	\mathcal{E}_\mathrm{N}(-n+1) \arrow{r}{(N^\bullet)^*} &
	\tensor{\mathcal{E}}{_(_\alpha_\beta_)}(-n) \arrow{r}{D^*} &
	\mathcal{E}(-n-2).
\end{tikzcd}
\end{equation*}
It is known that the composition of any two operators in the latter diagram vanishes if $n\ge 3$,
and even when $n=2$ this is still true except for the compositions at
$\mathcal{E}_\mathrm{N}(1)$ and $\mathcal{E}_\mathrm{N}(-n+1)$.
Part (2) of the theorem above implies that the sequence
$\mathcal{E}(1)\longrightarrow\tensor{\mathcal{E}}{_(_\alpha_\beta_)}(1)
\longrightarrow\tensor{\mathcal{E}}{_(_\alpha_\beta_)}(-n)\longrightarrow\mathcal{E}(-n-2)$ remains to be
a complex for arbitrary integrable CR manifolds;
this should be compared with the work of Branson and Gover~\cite{Branson_Gover_07} in conformal geometry.
Moreover, in the flat case, one can check (by our Proposition \ref{prop:linearized_obstruction_heisenberg})
that the operator $L_2$ locally factors into the composition of $N^\bullet$, some non-CR-invariant
differential operator $\mathcal{E}_\mathrm{N}(1)\longrightarrow\mathcal{E}_\mathrm{N}(-n+1)$, and $(N^\bullet)^*$.
Part (3) of the theorem can be considered as a partial generalization of this to the curved case.

In Theorem \ref{thm:obstruction_operator_observation}, the nontrivial statements are part (1), in particular
the complex-linearity of $\mathcal{O}^\bullet$, and part (3). The other things are more or less easy to see.
The first equality of \eqref{eq:double_divergence_free} is most obvious among them:
since $(\mathcal{O}^\bullet Df)(p)$ depends only on finite jets of $f$ and the CR structure $T^{1,0}M$ at
$p\in M$, we can formally embed $M$ to $\mathbb{C}^{n+1}$
(see Kuranishi's article~\cite{Kuranishi_99} for example),
and in this case the assertion is clear because $Df$ always integrates to a genuine deformation of integrable CR
structure (see~\cite{Akahori_Garfield_Lee_02,Hirachi_Marugame_Matsumoto_inprep}).
If part (1) is taken as a given, then the second equality of \eqref{eq:double_divergence_free} is also clear.
Therefore what we should really discuss are parts (1) and (3), and this is done in Subsection
\ref{subsec:further_properties}.
Also in this subsection, we describe a direct proof of $D^*\mathcal{O}^\bullet=0$ via the
Dirichlet-to-Neumann-type characterization of $\mathcal{O}^\bullet$
for the interest of comparing with the proof of~\cite{Matsumoto_14}*{Theorem 0.2 (3)},
from which we knew that the \emph{imaginary part of $D^*\mathcal{O}^\bullet$} vanishes.
The reason why we have a stronger conclusion for $\mathcal{O}^\bullet$ here are,
firstly we have a K\"ahler structure for the bulk ACH-Einstein metric in this case,
and secondly we are able to characterize $\mathcal{O}^\bullet$ in terms of a PDE associated to this metric and
do not have to concern anymore about the bulk metrics for perturbed partially integrable CR structures.

I thank Kengo Hirachi for various discussions and suggestions on the exposition of the paper.
I am also grateful to Robin Graham for pointing out that Lemma~\ref{lem:Masakis_lemma} in this article
is discussed in~\cite{Graham_83}.
Regarding this lemma, I acknowledge that I was benefited as well from
discussions with Hideaki Hosaka, Masaki Mori, and Masaki Watanabe.

\section{Partially integrable CR manifolds}
\label{sec:partially_integrable}

\subsection{Basic definitions}

Let $(M,T^{1,0}M)$ be a partially integrable CR manifold (that is not necessarily nondegenerate).
A measurement of the failure of $(M,T^{1,0}M)$ not being integrable is given by the \emph{Nijenhuis tensor} $N$,
which is the real $(2,1)$-tensor over $H=\re T^{1,0}M$ whose complexification is given by
\begin{equation*}
	N(X,Y)=[X_{1,0},Y_{1,0}]_{0,1}+[X_{0,1},Y_{0,1}]_{1,0},\qquad
	X,\ Y\in C^\infty(M,\mathbb{C}H),
\end{equation*}
where the subscripts ``$1,0$'' and ``$0,1$'' denote the projections from
$\mathbb{C}H=T^{1,0}M\oplus\conj{T^{1,0}M}$ onto each summand.
If $\set{Z_\alpha}$ is a local frame of $T^{1,0}M$, putting $Z_{\conj{\alpha}}=\conj{Z_\alpha}$
we introduce the index notation with respect to $\set{Z_\alpha,Z_{\conj{\alpha}}}$ and its
dual coframe $\set{\theta^\alpha,\theta^{\conj{\alpha}}}$.
Since the Nijenhuis tensor $N$ is a $(1,2)$-tensor, $N$ is represented by a collection
of functions indexed with two lower indices and a single upper index.
In this case the only components that can be nonzero are
$\tensor{N}{_\alpha_\beta^{\conj{\gamma}}}$ and their complex conjugates
$\tensor{N}{_{\conj{\alpha}}_{\conj{\beta}}^\gamma}$.
Moreover, it is clear from the definition that $\tensor{N}{_\alpha_\beta^{\conj{\gamma}}}$ is skew-symmetric
in $\alpha$ and $\beta$.
Expressing symmetrization (resp.~skew-symmetrization) by round (resp.~square) brackets,
we can write as
\begin{equation}
	\label{eq:nijenhuis_skew_symmetry}
	\tensor{N}{_(_\alpha_\beta_)^{\conj{\gamma}}}=0,\qquad\text{or equivalently,}\qquad
	\tensor{N}{_[_\alpha_\beta_]^{\conj{\gamma}}}=\tensor{N}{_\alpha_\beta^{\conj{\gamma}}}.
\end{equation}
In Penrose's \emph{abstract index notation}~\cite{Penrose_Rindler_84}, which is used throughout this paper,
the symbol $\tensor{N}{_\alpha_\beta^{\conj{\gamma}}}$ is regarded as
denoting the $(T^{1,0}M)^*\otimes(T^{1,0}M)^*\otimes\conj{T^{1,0}M}$ part of the tensor $N$ itself,
not just its components.
Equations \eqref{eq:nijenhuis_skew_symmetry} is considered as an abstract expression of the skew-symmetry of $N$.
Furthermore, in abstract index notation, the vector bundle
$(T^{1,0}M)^*\otimes(T^{1,0}M)^*\otimes\conj{T^{1,0}M}$ in which
$\tensor{N}{_\alpha_\beta^{\conj{\gamma}}}$ takes values is denoted by $\tensor{E}{_\alpha_\beta^{\conj{\gamma}}}$,
and $\tensor{\mathcal{E}}{_\alpha_\beta^{\conj{\gamma}}}$ is the space of its sections.
By $\tensor{E}{_[_\alpha_\beta_]^{\conj{\gamma}}}$ we mean
the bundle of tensors with the symmetry as in \eqref{eq:nijenhuis_skew_symmetry},
and hence $\tensor{N}{_\alpha_\beta^{\conj{\gamma}}}\in\tensor{\mathcal{E}}{_[_\alpha_\beta_]^{\conj{\gamma}}}$
has the same meaning as \eqref{eq:nijenhuis_skew_symmetry}.

Suppose that $\theta$ is any (possibly locally-defined) nowhere-vanishing 1-form on $M$ that annihilates $H$.
Then the \emph{Levi form} $h$ is defined as follows:
\begin{equation}
	\label{eq:levi_form}
	h(Z,\conj{W}):=-i\,d\theta(Z,\conj{W})=i\,\theta([Z,\conj{W}]),
	\qquad\text{$Z$, $W\in C^\infty(M,T^{1,0}M)$}.
\end{equation}
The Levi form itself depends on $\theta$, but $\Hat\theta=e^\Upsilon\theta$ implies $\Hat{h}=e^\Upsilon h$.
Invariantly, we can define the $\mathbb{C}$-linear map
\begin{equation}
	\label{eq:weighted_levi_form}
	T^{1,0}M\otimes\conj{T^{1,0}M}\longrightarrow\mathbb{C}(TM/H),
	\qquad Z\otimes\conj{W}\longmapsto(i[Z,\conj{W}]\text{ mod $\mathbb{C}H$}).
\end{equation}
It is natural to call this $\mathbb{C}(TM/H)$-valued hermitian form the \emph{weighted Levi form}
by the reason explained in Subsection \ref{subsec:density_bundles}.

A partially integrable CR structure $T^{1,0}M$ is \emph{nondegenerate} if the Levi form is
nondegenerate at each point on $M$, which is equivalent to saying that $H$ is a contact distribution.
In this case, any choice of $\theta$ is called a \emph{contact form} or a \emph{pseudohermitian structure.}
The global existence of a contact form is equivalent to the triviality of $H^\perp\subset T^*M$.
A particular example of such situation is when the Levi form has definite signature,
in which case $T^{1,0}M$ is \emph{strictly pseudoconvex}.
As declared in Introduction, in the sequel we always assume that $T^{1,0}M$ is nondegenerate and $H^\perp$ is
oriented.

If a contact form $\theta$ is specified, then by the nondegeneracy of the Levi form, one can lower and raise
indices of various tensors. Note that \eqref{eq:levi_form} implies
\begin{equation}
	\label{eq:levi_form_differential}
	d\theta=i\,\tensor{h}{_\alpha_{\conj{\beta}}}\,\theta^\alpha\wedge\theta^\conjbeta\mod\theta.
\end{equation}
For example, we define
$\tensor{N}{_\alpha_\beta_\gamma}:=\tensor{h}{_\gamma_{\conj{\sigma}}}\tensor{N}{_\alpha_\beta^{\conj{\sigma}}}$.
Then by differentiating \eqref{eq:levi_form_differential} one can show that
$\tensor{N}{_\alpha_\beta_\gamma}+\tensor{N}{_\beta_\gamma_\alpha}+\tensor{N}{_\gamma_\alpha_\beta}=0$.
Choosing a contact form $\theta$ also enables us to pick a canonical vector field $T$,
called the \emph{Reeb vector field}, that is characterized by $\theta(T)=1$ and $T\contraction d\theta=0$.
Note that $T$ is transverse to $H$.
If $\set{Z_\alpha}$ is a local frame of $T^{1,0}M$, then the associated \emph{admissible coframe}
$\set{\theta^\alpha}$ is the collection of $1$-forms vanishing on $\mathbb{C}T\oplus\conj{T^{1,0}M}$ such that
$\set{\theta^\alpha|_{T^{1,0}M}}$ is the dual coframe for $\set{Z_\alpha}$.
This makes $\set{\theta,\theta^\alpha,\theta^{\conj{\alpha}}}$ into the dual coframe for
$\set{T,Z_\alpha,Z_{\conj{\alpha}}}$.
In our index notation of tensors, the index $0$ is used for components corresponding with $T$ or $\theta$.

We next introduce the Tanaka--Webster connection $\nabla$ on a nondegenerate partially integrable CR manifold.
Just as in the integrable case, $\nabla$ is a connection of $TM$ characterized by the fact that
$H$, $T$, $J$, $h$ are all parallel with respect to $\nabla$ and
the torsion tensor $\Tor(X,Y):=\nabla_XY-\nabla_YX-[X,Y]$ satisfies
\begin{subequations}
	\label{eq:torsion_tanaka_webster}
	\begin{alignat}{2}
		\label{eq:torsion_tanaka_webster_1}
			&\Tor(X, JY)-\Tor(JX,Y)=2\,h(X,Y)T, &\qquad &X, Y\in \Gamma(H),\\
		\label{eq:torsion_tanaka_webster_2}
			&\Tor(T, JX)=-J\Tor(T, X), &\qquad &X\in \Gamma(H).
	\end{alignat}
\end{subequations}
This definition leads to the following first structure equation,
where $\set{\theta^\alpha}$ is an admissible coframe and
$\tensor{\omega}{_\alpha^\beta}$ are the connection forms:
\begin{equation}
	\label{eq:FirstStructureEquation2}
	d\theta^{\gamma}
	=\theta^{\alpha}\wedge\tensor{\omega}{_\alpha^\gamma}
	-\tensor{A}{_{\conj{\alpha}}^\gamma}\theta^{\conj{\alpha}}\wedge\theta
	-\tfrac{1}{2}\tensor{N}{_{\conj{\alpha}}_{\conj{\beta}}^\gamma}
	\theta^{\conj{\alpha}}\wedge\theta^{\conj{\beta}}.
\end{equation}
The tensor $A$ is the \emph{Tanaka--Webster torsion tensor}.
Moreover, if $\Pi$ is the curvature of $\nabla$, then the component
$\tensor{\Pi}{_\alpha^\beta_\sigma_{\conj{\tau}}}$ is called the \emph{Tanaka--Webster curvature tensor}
and denoted by $\tensor{R}{_\alpha^\beta_\sigma_{\conj{\tau}}}$.
The other components of $\Pi$ can be written in terms of $N$, $A$, $R$, and their covariant derivatives.

\subsection{Density bundles}
\label{subsec:density_bundles}

Suppose first that we can take an $(n+2)$-nd root of the CR canonical bundle
$K=\smash{\bigwedge}^{n+1}(\conj{T^{1,0}M})^\perp$.
We fix such a line bundle $E(-1,0)$ and write its dual $E(1,0)$. We set
\begin{equation*}
	E(w,w'):=E(1,0)^{\otimes w}\otimes \smash{\conj{E(1,0)}}^{\otimes w'},\qquad\text{$w$, $w'\in\mathbb{Z}$},
\end{equation*}
and call it the \emph{density bundle of biweight $(w,w')$}.
The space of sections of $E(w,w')$ is denoted by $\mathcal{E}(w,w')$, and its elements are called \emph{densities}.
Since there is a canonical isomorphism $E(-n-2,0)\cong K$, we can uniquely
define a compatible connection $\nabla$ on $E(1,0)$.
The bundles and the spaces of density-weighted tensors are indicated by the usual symbols followed by the weight:
for example,
$\tensor{E}{_\alpha_{\conj{\beta}}}(w,w'):=\tensor{E}{_\alpha_{\conj{\beta}}}\otimes E(w,w')$ and the
space of its sections is $\tensor{\mathcal{E}}{_\alpha_{\conj{\beta}}}(w,w')$.

Farris~\cite{Farris_86} observed that, if $\zeta$ is a locally-defined nonvanishing section of $K$,
then there is a unique contact form $\theta$ satisfying
\begin{equation}
	\label{eq:farris_volume_normalization}
	\theta\wedge(d\theta)^{n}
	=i^{n^{2}}n!(-1)^{q}\theta\wedge(T\contraction\zeta)\wedge(T\contraction\conj{\zeta}),
\end{equation}
where $q$ is the number of the negative eigenvalues of the Levi form.
We say that this $\theta$ is \emph{volume-normalized} by $\zeta$.
If we replace $\zeta$ with $\lambda\zeta$, where $\lambda\in C^\infty(M,\mathbb{C}^{\times})$,
then $\theta$ changes to $\lvert\lambda\rvert^{2/(n+2)}\theta$.
We set
\begin{equation*}
	\abs{\zeta}^{2/(n+2)}=\zeta^{1/(n+2)}\otimes\smash{\conj{\zeta}}^{1/(n+2)}\in\mathcal{E}(-1,-1),
\end{equation*}
which is independent of the choice of the $(n+2)$-nd root of $\zeta$.
Let $\lvert\zeta\rvert^{-2/(n+2)}\in\mathcal{E}(1,1)$ be its inverse.
Then we obtain a CR-invariant section $\bm{\theta}$ of $T^*M\otimes E(1,1)$:
\begin{equation*}
	\bm{\theta}:=\theta\otimes\lvert\zeta\rvert^{-2/(n+2)}.
\end{equation*}
Since $\bm{\theta}$ determines a trivialization of
$\mathbb{C}H^\perp\otimes E(1,1)$, there is a canonical identification
\begin{equation}
	\label{eq:contact_form_density_identification}
	\mathbb{C}H^\perp\cong E(-1,-1).
\end{equation}
This is compatible with any Tanaka--Webster connection $\nabla$ because
it is easily observed that $\nabla\bm{\theta}=0$ (see~\cite{Gover_Graham_05}*{Proposition 2.1}).
Dually, there is an identification
\begin{equation}
	\mathbb{C}(TM/H)\cong E(1,1),\qquad (v\text{ mod $\mathbb{C}H$})\longmapsto\bm{\theta}(v).
\end{equation}
We may use these isomorphisms to \emph{define} $E(w,w)$ even if we cannot take an $(n+2)$-nd root of $K$.
Since the Levi form $\tensor{h}{_\alpha_{\conj{\beta}}}$ and $\theta$ have the same scaling factor,
\begin{equation*}
	\tensor{\bm{h}}{_\alpha_{\conj{\beta}}}:=
	\tensor{h}{_\alpha_{\conj{\beta}}}\otimes\theta^{-1}\in\tensor{\mathcal{E}}{_\alpha_{\conj{\beta}}}(1,1)
\end{equation*}
is a parallel CR-invariant tensor, where $\theta$ is considered as a density in $\mathcal{E}(-1,-1)$
via \eqref{eq:contact_form_density_identification}.
This is exactly the weighted Levi form given by \eqref{eq:weighted_levi_form}.
Moreover, $\theta\wedge(d\theta)^n$ multiplies by $e^{(n+1)\Upsilon}$ when $\theta$ is replaced by
$e^\Upsilon\theta$, and thus $E(-n-1,-n-1)$ is identified with the bundle of volume densities.

We say that any weighted symmetric tensor
$\tensor{\psi}{_\alpha_\beta}\in\tensor{\mathcal{E}}{_(_\alpha_\beta_)}(1,1)$ determines an
\emph{infinitesimal deformation of partially integrable CR structure}.
This is because, if $T^{1,0}M$ is modified to a new almost CR structure
$\Hat{T}^{1,0}=\spanof\set{\Hat{Z}_\alpha=Z_\alpha+\tensor{\varphi}{_\alpha^{\conj{\beta}}}Z_{\conj{\beta}}}$ by
$\tensor{\varphi}{_\alpha^{\conj{\beta}}}\in\tensor{\mathcal{E}}{_\alpha^{\conj{\beta}}}$,
then $\Hat{T}^{1,0}$ is partially integrable if and only if
$\tensor{\varphi}{_\alpha_\beta}=\tensor{\bm{h}}{_\alpha_{\conj{\gamma}}}\tensor{\varphi}{_\beta^{\conj{\gamma}}}$
is symmetric.
In the case where we are given a 1-parameter family $T^{1,0}_t$ of partially integrable CR structures
such that $T^{1,0}_0=T^{1,0}M$ and each $T^{1,0}_t$ is described as above by
$\tensor*{\varphi}{^t_\alpha_\beta}$ that is smooth in $t$, then we say that the family $T^{1,0}_t$ is
\emph{tangent} to the infinitesimal deformation
$\tensor{\psi}{_\alpha_\beta}=\tensor*{\varphi}{^\bullet_\alpha_\beta}$.
If moreover the original partially integrable CR structure $T^{1,0}M$ is integrable, then the
differential $N^\bullet$ of the Nijenhuis tensor is given by, for any choice of a contact form,
\begin{equation*}
	\tensor{(N^\bullet\psi)}{_\alpha_\beta_\gamma}
	=2\tensor*{\nabla}{^{\mathrm{TW}}_[_\alpha}\tensor{\psi}{_\beta_]_\gamma},
\end{equation*}
where the last index of
$\tensor{(N^\bullet\psi)}{_\alpha_\beta^{\conj{\gamma}}}\in\tensor{\mathcal{E}}{_\alpha_\beta^{\conj{\gamma}}}$
is lowered by the weighted Levi form.

\section{Summary on asymptotically complex hyperbolic Einstein metrics}
\label{sec:ACHE}

\subsection{$\Theta$-structures}

Let $X$ be a $C^\infty$-smooth manifold-with-boundary of dimension $2n+2$.
Suppose we are given a section $\Theta\in C^\infty(\bdry X,T^*X|_{\bdry X})$.
We assume the following conditions are satisfied, where $\iota\colon\bdry X\hookrightarrow X$ is the
inclusion map:
\begin{enumerate}
	\item $\iota^*\Theta$ is a nowhere vanishing 1-form on $\bdry X$;
	\label{item:nowhere_vanishing_condition}
	\item The kernel $H$ of $\theta=\iota^*\Theta$ is a contact distribution on $\partial X$.
	\label{item:contact_condition}
\end{enumerate}
A \emph{$\Theta$-structure} on $X$ is a conformal class $[\Theta]$ of elements of
$C^\infty(\bdry X,T^*X|_{\bdry X})$
satisfying (\ref{item:nowhere_vanishing_condition}) and (\ref{item:contact_condition}) above,
and a pair $(X,[\Theta])$ is called a \emph{$\Theta$-manifold}.
Any contact form on $\bdry X$ that belongs to the class $\iota^*[\Theta]$ is called a
\emph{compatible contact form}.
Note that picking up the conformal class $\iota^*[\Theta]$ amounts to fixing an orientation of
$H^\perp\subset T^*\bdry X$.

If we are given a $\Theta$-manifold $X$, there is a canonical smooth vector bundle $\thetatangent$,
which we call the \emph{$\Theta$-tangent bundle} of $X$.
Over the interior $\mathring{X}$, $(\thetatangent)|_{\mathring{X}}$ is canonically isomorphic to the usual
tangent bundle $T\mathring{X}$, while its structure near the boundary is described as follows.
Let $p\in\bdry X$, and $\set{N,T,Y_i}=\set{N,T,Y_1,\dots,Y_{2n}}$ a local frame of $TX$ in a neighborhood $U$ of
$p$ such that
\begin{itemize}
	\item $N|_{\bdry X}$ is annihilated by $[\Theta]$,
	\item $T$, $Y_1$, $\dots$, $Y_{2n}$ are tangent to $\bdry X\cap U$, and
	\item $\set{Y_1|_{\bdry X},\dots,Y_{2n}|_{\bdry X}}$ span the contact distribution $H$.
\end{itemize}
Then $\thetatangent|_U$ is spanned by $\set{\rho N,\rho^2T,\rho Y_i}$,
where $\rho\in C^\infty(X)$ is any boundary defining function.
The dual vector bundle of $\thetatangent$ is denoted by $\thetacotangent$,
and sections of tensor products of $\thetatangent$'s and $\thetacotangent$'s are called
\emph{$\Theta$-tensors} in general.
A fiber metric of $\thetatangent$ is called a \emph{$\Theta$-metric}.

A virtue of the concept of the $\Theta$-tangent bundle is
that the space $\mathcal{V}_\Theta$ of its sections is closed under the Lie bracket.
Due to this fact, the Levi-Civita connection associated to any $\Theta$-metric can be naturally
considered as a $\Theta$-connection on $\thetatangent$.
Here we say that $\nabla$ is a \emph{$\Theta$-connection} on a vector bundle $E$ if it is
an $\mathbb{R}$-linear mapping
\begin{equation*}
	\nabla\colon C^\infty(X;E)\longrightarrow C^\infty(X;\thetacotangent\otimes E),\qquad
	s\longmapsto(v\longmapsto\nabla_vs),
\end{equation*}
satisfying the usual Leibniz rule.
As a consequence, the Riemann curvature tensor of a $\Theta$-metric is regarded as a $\Theta$-tensor,
and so is the Ricci tensor.

A differential operator on functions is called a \emph{$\Theta$-differential operator}
if it is locally expressed as a polynomial in elements of $\mathcal{V}_\Theta$, and the set of such operators
is denoted by $\thetadiff(X)$.
If $E$ and $F$ are vector bundles over $X$, then the bundle version $\thetadiff(X;E,F)$ is similarly defined.
As is easily observed, $\Theta$-connections are typical examples of elements of
$\thetadiff(X;E,\thetacotangent\otimes E)$.

\subsection{ACH metrics}
\label{subsec:ACH_metrics}

We explain what we call ACH metrics only in a rather practical form; a more intrinsic definition can be
found in~\cite{Epstein_Melrose_Mendoza_91,Guillarmou_SaBarreto_08,Matsumoto_14}.
We start with a partially integrable CR manifold $(M,T^{1,0}M)$.
The \emph{standard $\Theta$-structure} on the product manifold $M\times[0,\infty)$
is the class $[\Theta]$ that annihilates the vector field $\partial_\rho=\partial/\partial\rho$,
where $\rho$ is the second coordinate of $M\times[0,\infty)$,
and pulls back to the given conformal class of contact forms on the boundary $M=M\times\set{0}$.
The manifold $X=M\times[0,\infty)$ equipped with the standard $\Theta$-structure is called the
\emph{product $\Theta$-manifold}.
Suppose we take a contact form $\theta$ on $M$ and a local frame $\set{Z_\alpha}$ of $T^{1,0}M$.
Then we write
\begin{equation*}
	\bm{Z}_\infty=\rho\partial_\rho,\qquad
	\bm{Z}_0=\rho^2T,\qquad
	\bm{Z}_\alpha=\rho Z_\alpha,\qquad\text{and}\qquad
	\bm{Z}_{\conj{\alpha}}=\rho Z_{\conj{\alpha}},
\end{equation*}
where $T$ is the Reeb vector field.
The set $\set{\bm{Z}_I}=\set{\bm{Z}_\infty,\bm{Z}_0,\bm{Z}_\alpha,\bm{Z}_{\conj{\alpha}}}$ spans the complexified
$\Theta$-tangent bundle $\mathbb{C}\thetatangent$.

If $U\subset M\times[0,\infty)$ is an open neighborhood of $M$, we say that a $\Theta$-metric $g$
defined on $U$ is an \emph{ACH metric} if, for some choice of $\theta$,
the boundary values of its components with respect to $\set{\bm{Z}_I}$ are as follows:
\begin{alignat*}{5}
	\tensor{g}{_\infty_\infty}&=4,&\qquad
	\tensor{g}{_\infty_0}&=0,&\qquad
	\tensor{g}{_\infty_\alpha}&=0,\\
	\tensor{g}{_0_0}&=1,&\qquad
	\tensor{g}{_0_\alpha}&=0,&\qquad
	\tensor{g}{_\alpha_{\conj{\beta}}}&=\tensor{h}{_\alpha_{\conj{\beta}}},&\qquad
	\tensor{g}{_\alpha_\beta}&=0&\qquad
	&\text{on $M$}.
\end{alignat*}
Here $\tensor{h}{_\alpha_{\conj{\beta}}}$ is the components of the Levi form with respect to $\set{Z_\alpha}$.

When $X$ is an arbitrary $\Theta$-manifold, by a \emph{compatible partially integrable CR structure} on
$M=\bdry X$ we mean a partially integrable CR structure whose underlying contact distribution is
the one induced by the $\Theta$-structure.
Then we define ACH metrics on $X$ as follows, where
an $\Theta$-diffeomorphism between $\Theta$-manifolds means
a diffeomorphism that preserves the $\Theta$-structures.

\begin{dfn}
	Let $X$ be a $\Theta$-manifold of dimension $2n+2$.
	Then a $\Theta$-metric $g$ on $X$ is called an \emph{ACH metric} if there exist following:
	\begin{enumerate}
		\item A compatible partially integrable CR structure $T^{1,0}M$ on $M=\bdry X$;
		\item An open neighborhood $U$ of $M$ in the product $\Theta$-manifold $M\times[0,\infty)$,
			an open neighborhood $V$ of $M$ in $X$,
			and a $\Theta$-diffeomorphism $\Phi\colon U\longrightarrow V$
			such that $\Phi|_M=\id_M$ and $\Phi^*g$ is an ACH metric on $U$.
	\end{enumerate}
	While $\Phi$ is not unique, it is known that $T^{1,0}M$ is determined by $g$.
	This is called the \emph{CR structure at infinity}.
\end{dfn}

An ACH metric $g$ on a neighborhood $U$ of $M\subset M\times[0,\infty)$ is called \emph{normalized} if
\begin{equation*}
	\tensor{g}{_\infty_\infty}=4,\qquad
	\tensor{g}{_\infty_0}=0,\qquad
	\tensor{g}{_\infty_\alpha}=0\qquad
	\text{everywhere in $U$}.
\end{equation*}
The following proposition, whose proof is given in~\cite{Guillarmou_SaBarreto_08},
makes this terminology reasonable.

\begin{prop}
	\label{prop:ACH_normalization}
	Let $X$ be a $\Theta$-manifold and $g$ a $C^\infty$-smooth ACH metric.
	If $\theta$ is any compatible contact form on $M=\bdry X$,
	then one can take a $\Theta$-diffeomorphism $\Phi\colon U\longrightarrow V$ so that $\Phi^*g$ is normalized.
	For any given $\theta$, the germ of $\Phi$ along $M$ is uniquely determined.
\end{prop}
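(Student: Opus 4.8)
The plan is to realize the normalized form as the outcome of a \emph{$\Theta$-normal form} construction, in direct analogy with the geodesic normal form for asymptotically hyperbolic metrics. Unwinding the definitions, a $\Theta$-metric on a collar $U\subset M\times[0,\infty)$ is normalized precisely when the coordinate vector field $\partial_\rho$ is $g$-orthogonal to every slice $M\times\set{\rho}$ and satisfies $g(\partial_\rho,\partial_\rho)=4/\rho^2$; since $d\rho(\partial_\rho)=1$, this is in turn equivalent to asking that $\rho$ be a boundary defining function with $\abs{d\rho}_g^2=\rho^2/4$ whose slices are the level sets of $\rho$. In other words, normalization amounts to solving the eikonal equation $\abs{dr}_g^2=1$ for $r=-2\log\rho$ with suitable Cauchy data at the boundary, and then identifying the collar by the orthogonal ``radial'' flow. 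So the first step is a reduction: by the very definition of an ACH metric we may pass, via an initial $\Theta$-diffeomorphism, to the situation where $X=U$ is a collar neighborhood of $M$ in $M\times[0,\infty)$, $g$ is an ACH metric there with CR structure at infinity the given one, and the second coordinate $\rho_0$ realizes the standard boundary values of $g$ in the associated frame $\set{\bm{Z}_I}$; if $g$ is a priori ACH only with respect to another contact form $e^\Upsilon\theta$ in the conformal class, one first applies the fibrewise rescaling $(m,\rho_0)\mapsto(m,e^{-\Upsilon(m)}\rho_0)$ to pass to $\theta$.

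Second, I would solve the eikonal equation by perturbing $\rho_0$. Write $\rho=\rho_0 e^{\omega}$ with $\omega$ to be found and smooth up to the boundary. The ACH boundary values give, in the frame $\set{\bm{Z}_I}$, the relations $g^{\infty\infty}=\tfrac14+O(\rho_0)$ and $g^{\infty 0}=g^{\infty\alpha}=O(\rho_0)$, so that $\mathrm{grad}_g\rho_0=\tfrac14\rho_0\bm{Z}_\infty+O(\rho_0^2)$ as a $\Theta$-vector field and $\abs{d\rho_0}_g^2=\rho_0^2/4+O(\rho_0^3)$. Substituting $\rho=\rho_0e^{\omega}$ into $\abs{d\rho}_g^2=\rho^2/4$ and dividing by $\rho^2$ turns the equation into a first-order PDE for $\omega$ whose principal part is $\bm{Z}_\infty\omega=\rho_0\partial_{\rho_0}\omega$, plus terms of order $O(\rho_0)$ or lower; because of the explicit $\rho_0\partial_{\rho_0}$ this is a non-characteristic problem in the category of functions smooth up to $\bdry X$, solvable by integrating an ODE along the $\rho_0$-lines with the restriction $\omega|_M$ prescribed freely, and unique for each such choice. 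Taking $\omega|_M=0$ keeps the leading jet of the new $\Theta$-frame equal to the one determined by $\theta$, so the resulting $\rho$ is a boundary defining function that solves the eikonal equation and still realizes the standard boundary values. \emph{This step is the analytic heart of the argument} — verifying that the degenerate (and, as $\rho\to0$, singular) eikonal equation for the ordinary metric is a well-posed non-characteristic problem in the $\Theta$-category, with solutions smooth up to $\bdry X$ — and it is exactly what is carried out in~\cite{Guillarmou_SaBarreto_08}.

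Third, with such a $\rho$ in hand I would build $\Phi$ from the flow of the $\Theta$-vector field $R:=\tfrac{4}{\rho}\,\mathrm{grad}_g\rho$, which satisfies $R\rho=\rho$ and $g(R,Y)=0$ for every $Y$ tangent to a level set of $\rho$, and which near $\bdry X$ has the form $\bm{Z}_\infty+O(\rho)$ (equivalently, the integral curves of $R$ are reparametrized unit-speed $g$-geodesics issuing orthogonally from $M$). Since $R$ is a $\Theta$-vector field vanishing simply along $M$, its flow extends smoothly up to the boundary, fixes $M$ pointwise, and its orbits foliate $V\smallsetminus M$ for a collar $V\subset X$; pushing a fixed interior slice $\set{\rho=\rho_1}$ back toward $M$ along these orbits identifies $V$ with a collar $U\subset M\times[0,\infty)$, and this identification is the desired $\Phi$. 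By construction $\Phi^*\rho$ is the second coordinate, $\Phi$ conjugates the flow of $\rho\,\partial_\rho$ to that of $R$, $\Phi|_M=\id_M$, and $\Phi^*g$ is an ACH metric that is normalized.

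Finally, for the uniqueness of the germ, suppose $\Phi'$ is a second such $\Theta$-diffeomorphism and put $\Psi:=\Phi^{-1}\circ\Phi'$, a germ at $M$ of a $\Theta$-diffeomorphism of $M\times[0,\infty)$ fixing $M$ with $\Psi^*g_1=g_2$, where $g_1=\Phi^*g$ and $g_2=\Phi'^*g$ are both normalized ACH metrics. Pulling back the eikonal identity shows that $\rho\circ\Psi$ and $\rho$ both solve $\abs{d(\,\cdot\,)}_{g_2}^2=(\,\cdot\,)^2/4$ with the same Cauchy data along $M$ — vanishing, with the same leading jet fixed by $\theta$ — so by uniqueness in the non-characteristic Cauchy problem $\rho\circ\Psi=\rho$, i.e.\ $\Psi$ preserves the level sets of $\rho$. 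The normalization conditions $g_i(\partial_\rho,\partial_\rho)=4/\rho^2$ and $g_i(\partial_\rho,\,\cdot\,)=0$ on those level sets, together with nondegeneracy of $g_1$ on the slices near $M$, then force $\Psi_*\partial_\rho=\partial_\rho$; combined with $\Psi|_M=\id_M$ this yields $\Psi=\id$ as a germ, completing the proof. The only obstacle beyond the PDE step of the second paragraph is the bookkeeping that ensures ``normalized'' and ``ACH with respect to $\theta$'' are compatible requirements, which is arranged by the choice $\omega|_M=0$ throughout.
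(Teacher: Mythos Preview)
The paper does not give its own proof of this proposition; it simply states that the proof is given in~\cite{Guillarmou_SaBarreto_08}. Your proposal is a faithful sketch of exactly that argument---the geodesic normal form construction adapted to the $\Theta$-setting, via the eikonal equation $\abs{d\rho}_g^2=\rho^2/4$ and the gradient flow---and you correctly identify the analytic core and defer it to the same reference, so you are in complete agreement with the paper.

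One small terminological quibble: calling the eikonal problem ``non-characteristic'' is slightly misleading, since the coefficient of $\partial_{\rho_0}$ in the reduced equation vanishes at $\rho_0=0$; what makes the Cauchy problem well posed is rather that, after dividing through, one obtains a regular first-order equation $\partial_{\rho_0}\omega=H(\rho_0,m,\omega,\nabla\omega)$ with $H$ smooth up to the boundary, so one can integrate from $\rho_0=0$ with $\omega|_M$ as free data. This is the content of the cited result and does not affect the correctness of your outline.
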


This asserts in particular that there is a distinguished (germ of) boundary defining function(s) for
each $\theta$. This is called the \emph{model boundary defining function} in~\cite{Guillarmou_SaBarreto_08}.

\begin{ex}
	\label{ex:complex_hyperbolic}
	The model of ACH metrics is, of course, the complex hyperbolic metric.
	In the Siegel upper-half space model, it is the K\"ahler metric $g$ on
	$\Omega=\set{(z',w)\in\mathbb{C}^n\times\mathbb{C}|\im w>\abs{z'}^2}$
	with a global potential $4\log(1/r)$, where $r=\im w-\abs{z'}^2$:
	\begin{equation*}
		g=4\frac{\partial^2}{\partial z^i\partial\conj{z}^j}\left(\log\frac{1}{r}\right)dz^id\conj{z}^j
		=4\left(\frac{r_ir_{\conj{j}}}{r^2}-\frac{r_{i\conj{j}}}{r}\right)dz^id\conj{z}^j.
	\end{equation*}
	The boundary of $\Omega$ is the Heisenberg group $\mathcal{H}$, which is identified with
	$\mathbb{C}^n\times\mathbb{R}$ by $(z',w)\longmapsto(z',\re w)$.
	Extending this identification, we consider the following diffeomorphism between $\overline{\Omega}$ and
	$\mathcal{H}\times[0,\infty)=\mathbb{C}^n\times\mathbb{R}\times[0,\infty)$:
	\begin{equation*}
		\overline{\Omega}\longrightarrow\mathbb{C}^n\times\mathbb{R}\times[0,\infty),\qquad
		(z',w)\longmapsto(z',\re w,r=\im w-\abs{z'}^2).
	\end{equation*}
	Let $\Phi$ be the inverse of this mapping: $\Phi(z',t,r)=(z',t+i(r+\abs{z'}^2))$. Then,
	\begin{equation*}
		\Phi^*(r_idz^i)=\Phi^*\left(-\sum_{\alpha=1}^n\conj{z}^\alpha dz^\alpha+\frac{1}{2i}dw\right)
		=\frac{1}{2}dr-\frac{i}{2}dt
		-\frac{1}{2}\sum_{\alpha=1}^n(\conj{z}^\alpha dz^\alpha-z^\alpha d\conj{z}^\alpha)
		=\frac{1}{2}dr-i\theta,
	\end{equation*}
	where $\theta$ is the standard contact form. Therefore, by setting $\rho=\sqrt{r/2}$ we obtain
	\begin{equation*}
		\Phi^*g=\frac{dr^1}{r^2}+4\frac{\theta^2}{r^2}
		+\frac{4}{r}\sum_{\alpha=1}^ndz^\alpha d\conj{z}^\alpha
		=4\frac{d\rho^2}{\rho^2}+\frac{\theta^2}{\rho^4}
		+\frac{2}{\rho^2}\sum_{\alpha=1}^ndz^\alpha d\conj{z}^\alpha.
	\end{equation*}
	This shows that $g$ is an ACH metric on $\overline{\Omega}$ with $C^\infty$-structure replaced by
	the one that $\set{(z',t,\rho)}$ defines, and the CR structure at infinity is the standard one.
\end{ex}

\begin{ex}
	\label{ex:Bergman_type}
	We had to take the square root of $r$ in the example above.
	This generalizes to the square root construction of Epstein, Melrose,
	and Mendoza~\cite{Epstein_Melrose_Mendoza_91}.
	If $\Omega$ is a domain in a complex manifold $\mathcal{N}$ with $C^\infty$-smooth Levi nondegenerate boundary,
	then we can canonically define a $\Theta$-manifold $X$ called the \emph{square root} of $\overline{\Omega}$,
	which is
	\begin{itemize}
		\item $\cl{\Omega}$ with $C^\infty$-structure replaced by adjoining the square roots of $C^\infty$-smooth
			boundary defining functions; and
		\item equipped with the $\Theta$-structure given by the pullback of
			$\frac{i}{2}(\partial r-\conj{\partial}r)|_{T\bdry\Omega}$ by the identity map
			$\iota\colon X\longrightarrow\cl{\Omega}$.
	\end{itemize}
	The map $\iota$ is $C^\infty$-smooth but not vice versa;
	nevertheless it gives diffeomorphisms $\mathring{X}\cong\Omega$ and $\bdry X\cong\bdry\Omega$.
	Let $g$ be any Bergman-type metric on $\Omega$:
	\begin{equation}
		\label{eq:Bergman_type_metric}
		g=4\frac{\partial^2}{\partial z^i\partial\conj{z}^j}\left(\log\frac{1}{r}\right)dz^id\conj{z}^j,
	\end{equation}
	where $r$ is a boundary defining function with respect to the original $C^\infty$-structure of
	$\overline{\Omega}$ that is at least $C^2$.
	Then $g$ can be naturally interpreted as an ACH metric on $X$.
	The CR structure at infinity $\bdry X=M$ is exactly the integrable CR structure
	induced by the complex structure of $\mathcal{N}$.
\end{ex}

\subsection{Approximate solutions to the Einstein equation}
\label{subsec:approximate_ACHE}

Now we describe the existence and uniqueness result of approximate solutions of the Einstein equation
under the assumption of $C^\infty$ boundary regularity.

\begin{thm}
	\label{thm:existence}
	Let $X$ be a $\Theta$-manifold of dimension $2n+2$
	and $T^{1,0}M$ a compatible partially integrable CR structure on $M=\bdry X$.
	Then there exists a $C^\infty$-smooth ACH metric $g$ on $X$ with CR structure at infinity $T^{1,0}M$ for which
	\eqref{eq:approx_Einstein} is satisfied.
	Such a metric $g$ is, up to $\Theta$-diffeomorphism actions that restrict to the identity on $M$,
	unique modulo $O(\rho^{2n+2})$ symmetric 2-$\Theta$-tensors with $O(\rho^{2n+3})$ traces.
\end{thm}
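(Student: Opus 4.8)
The plan is to work with a fixed compatible contact form $\theta$ and to build $g$ in the normalized gauge of Proposition~\ref{prop:ACH_normalization}, solving the Einstein equation recursively in powers of the model boundary defining function $\rho$ associated to $\theta$. By the definition of ACH metrics together with Proposition~\ref{prop:ACH_normalization}, every ACH metric is carried by a $\Theta$-diffeomorphism fixing $M$ to a normalized one, and for fixed $\theta$ this normalization is unique up to a germ along $M$; hence it suffices to prove existence and uniqueness within the class of normalized ACH metrics with CR structure at infinity $T^{1,0}M$, the uniqueness being up to the stated tensor ambiguity (passage to a different contact form being realized by a genuine boundary-fixing $\Theta$-diffeomorphism). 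In the frame $\set{\bm{Z}_\infty,\bm{Z}_0,\bm{Z}_\alpha,\bm{Z}_{\conj{\alpha}}}$ a normalized metric is determined by its tangential block $g_{ij}$ with $i,j$ ranging over $\set{0,\alpha,\conj{\beta}}$, i.e.\ the restriction of $g$ to the span of $\set{\bm{Z}_0,\bm{Z}_\alpha,\bm{Z}_{\conj{\alpha}}}$, a $\rho$-dependent Hermitian-type form whose boundary value is $g_{00}=1$, $g_{0\alpha}=0$, $g_{\alpha\conj{\beta}}=h_{\alpha\conj{\beta}}$, $g_{\alpha\beta}=0$; imposing this boundary value is exactly what makes the CR structure at infinity equal the given $T^{1,0}M$.

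Next I would compute the Ricci tensor of such a $g$ as a $\Theta$-tensor and decompose the Einstein equation $\Ric+\tfrac{n+2}{2}g=0$ along the frame. The $(\infty,\infty)$ and $(\infty,i)$ components are first-order constraint equations; the contracted second Bianchi identity for the $\Theta$-Levi-Civita connection relates them to the remaining equations and shows that for a normalized $g$ the constraints hold on $M$ automatically and propagate to higher order once the tangential (``evolution'') equations do, so the whole system reduces to the latter. The evolution equations form a regular-singular system in $\rho$,
\begin{equation*}
	(\rho\partial_\rho)^2 g_{ij}+(\text{terms of lower order in }\rho\partial_\rho)+(\text{tangential Heisenberg-type operators})=0,
\end{equation*}
whose leading coefficient defines an indicial operator $I(m)$ on the bundle of symmetric $2$-tensors over $M$. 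Substituting the Taylor ansatz $g_{ij}\sim\sum_{m\ge 0}g^{(m)}_{ij}\rho^m$, the coefficient of $\rho^m$ yields an equation of the form $I(m)g^{(m)}_{ij}=F^{(m)}_{ij}$, where $F^{(m)}$ is a universal polynomial expression in $g^{(0)},\dots,g^{(m-1)}$ and their Tanaka--Webster covariant derivatives, hence ultimately in the Nijenhuis tensor $N$, the Tanaka--Webster torsion $A$ and curvature $R$, the Levi form $h_{\alpha\conj{\beta}}$, and its inverse.

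The heart of the matter is the indicial analysis. One must verify that $I(m)$ is invertible on the summands of the symmetric $2$-tensor bundle relevant to a normalized metric for all $0<m<2n+2$, so that the $g^{(m)}_{ij}$ with $0<m\le 2n+1$ are uniquely determined with no logarithmic terms forced, and that at the critical order $m=2n+2$ the kernel of $I(2n+2)$ is supported precisely on the $g_{\alpha\beta}$-type summand (trace-free, symmetric in $\alpha,\beta$), while on the trace summand the kernel first appears one step later, at $m=2n+3$. Granting this: for $0<m\le 2n+1$ the coefficients are forced, the trace part of the $\rho^{2n+2}$-coefficient is forced too, and we may take any $C^\infty$ symmetric $2$-$\Theta$-tensor realizing this jet (Borel's lemma) to obtain a $C^\infty$-smooth ACH metric $g$ with $E=\Ric+\tfrac{n+2}{2}g=O(\rho^{2n+2})$; solving the scalar equation one further order, which is possible since its first exceptional order is $2n+3$, gives $\Scal+(n+1)(n+2)=O(\rho^{2n+3})$. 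For uniqueness, two such normalized solutions share all Taylor coefficients through order $\rho^{2n+1}$ and the trace at order $\rho^{2n+2}$, so their difference is an $O(\rho^{2n+2})$ symmetric $2$-$\Theta$-tensor with $O(\rho^{2n+3})$ trace; undoing the normalizations then shows that two arbitrary approximate-Einstein ACH metrics with the same CR infinity differ by a boundary-fixing $\Theta$-diffeomorphism followed by adding such a tensor. The main obstacle is exactly this indicial computation on $2$-tensors---pinning down that the critical exponent is $2n+2$ with the trace exceptional one order later, and excluding forced logarithms below it---which is genuinely harder than in the integrable case because partial integrability couples the $g_{\alpha\beta}$ block to the Nijenhuis tensor; the $\Theta$-geometric bookkeeping of the Levi-Civita connection and curvature of $g$ near $M$ is the bulk of the technical work.
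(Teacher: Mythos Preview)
Your proposal is correct and follows essentially the same route as the paper's sketch: reduce via Proposition~\ref{prop:ACH_normalization} to the normalized gauge, then build $g$ inductively in powers of $\rho$ by analyzing the linearized Einstein operator order by order, identifying $m=2n+2$ as the first critical order (on the $g_{\alpha\beta}$ block) and fixing the trace one step further via the scalar equation. The paper phrases the inductive step as the bijectivity of the linear map $\psi\longmapsto\Psi$ on $\mathcal{S}_m/\mathcal{S}_{m+1}$ for $1\le m\le 2n+1$, which is exactly your indicial-operator analysis; your explicit invocation of the contracted Bianchi identity to dispose of the $(\infty,I)$ constraint components is a standard ingredient the sketch here suppresses but which is implicit in the full argument referenced to~\cite{Matsumoto_14} and the author's thesis.
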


This is essentially~\cite{Matsumoto_14}*{Theorem 1.1} (see Theorem 6.2 in the same article for the uniqueness),
but the approximate Einstein condition is slightly modified.
We only sketch the proof of this version here as the complete one can be found in~\cite{Matsumoto_13_Thesis}.
It is sufficient by Proposition \ref{prop:ACH_normalization}
to consider the case where $g$ is defined near the boundary of $M\times[0,\infty)$ and is normalized.
One proves under this assumption the existence of a metric $g$ satisfying
\eqref{eq:approx_Einstein} and the uniqueness up to $O(\rho^{2n+2})$ symmetric 2-$\Theta$-tensors with
$O(\rho^{2n+3})$ traces.
This is done inductively: we perturb $g$ by an $O(\rho^m)$ 2-$\Theta$-tensor $\psi$,
and compute the change $\Psi$ of $E:=\Ric+\frac{n+2}{2}g$ modulo $O(\rho^{m+1})$.
The conclusion is that, if $1\le m\le 2n+1$, the correspondence $\psi\longmapsto\Psi$ is
a one-to-one $\mathbb{R}$-linear mapping from $\mathcal{S}_m/\mathcal{S}_{m+1}$ onto itself, where
$\mathcal{S}_m$ denotes the space of $C^\infty$-smooth symmetric 2-$\Theta$-tensors that are $O(\rho^m)$.
As a result we obtain an ACH metric $g$ for which \eqref{eq:approx_Einstein_ricci} holds.
Then one fixes the trace of $g$ modulo $O(\rho^{2n+3})$ by requiring \eqref{eq:approx_Einstein_trace},
which needs a bit subtler observation of the map $\psi\longmapsto\Psi$.

Let $g$ be a $C^\infty$-smooth ACH metric satisfying \eqref{eq:approx_Einstein}.
If $\theta$ is a compatible contact form, then for the normalization of $g$ with respect to $\theta$,
we set $E=\rho^{2n+2}\tilde{E}$.
Then $\tensor{\tilde{E}}{_\alpha_\beta}|_M=\tilde{E}(\bm{Z}_\alpha,\bm{Z}_\beta)|_M$ is uniquely determined by
the CR structure at infinity and $\theta$, and it invariantly defines the \emph{CR obstruction tensor}
$\tensor{\mathcal{O}}{_\alpha_\beta}\in\tensor{\mathcal{E}}{_(_\alpha_\beta_)}(-n,-n)$.

\begin{rem}
	The approximate solution $g$ is necessarily even (up to the ambiguous terms)
	in the sense of Guillarmou and S\`a Barreto~\cite{Guillarmou_SaBarreto_08}*{Section 3.2}.
	This is because \eqref{eq:approx_Einstein} remains to be satisfied
	even if we formally replace $\rho$ with $-\rho$ in the expansion of $g$.
	However, we do not need the evenness in our subsequent discussion.
\end{rem}

In particular, consider the case in which $M$ is the boundary of a domain $\Omega$ in $\mathbb{C}^{n+1}$.
Then the Bergman-type metric \eqref{eq:Bergman_type_metric} given by Fefferman's approximate solution
$r$ to the complex Monge--Amp\`ere equation~\cite{Fefferman_76} satisfies,
if considered as a $\Theta$-metric on the square root $X$ of $\cl{\Omega}$,
\begin{equation}
	\label{eq:approx_Einstein_Kahler}
	\Ric=-\frac{n+2}{2}g+O(\rho^{2n+4}).
\end{equation}
Hence we have $\tensor{\mathcal{O}}{_\alpha_\beta}=0$ in this case. Moreover, since
$\tensor{\mathcal{O}}{_\alpha_\beta}$ admits an expression in terms of the Tanaka--Webster connection,
we conclude by formal embedding that $\tensor{\mathcal{O}}{_\alpha_\beta}=0$ holds for an arbitrary integrable
CR manifold. For details, see~\cite{Matsumoto_14}*{Proposition 5.5}.

\section{Dirichlet problems and volume expansion}
\label{sec:dirichlet_prob}

\subsection{Laplacian on functions}

We consider an arbitrary $C^\infty$-smooth normalized ACH metric $g$ defined near the boundary of
a $(2n+2)$-dimensional product $\Theta$-manifold $X=M\times[0,\infty)$.
The purpose of this subsection is to prove the following formula of the Laplacian of $g$.

\begin{prop}
	\label{prop:ACH_Laplacian}
	The Laplacian of a $C^\infty$-smooth ACH metric $g$ normalized with respect to
	$\theta$ is a $\Theta$-differential operator of the form
	\begin{equation}
		\label{eq:ACHLaplacianWithSublaplacian}
		\Delta=-\frac{1}{4}(\rho\partial_\rho)^2+\frac{n+1}{2}\rho\partial_\rho
		+\rho^2\Delta_b-\rho^4T^2+\rho\Psi,
		\qquad \Psi\in\thetadiff(X),
	\end{equation}
	where $\Delta_b$ is the sub-Laplacian and $T$ is the Reeb vector field associated to $\theta$.
\end{prop}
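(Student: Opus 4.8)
The plan is to exploit the fact that the normalization of $g$ makes it a warped product over the level sets of $\rho$, so that computing $\Delta$ reduces to the classical formula for the Laplacian of such a metric together with an analysis of how the induced slice metrics degenerate at the boundary.

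\emph{Warped product structure and the first two terms.} Since $g_{\infty\infty}=4$ and $g_{\infty 0}=g_{\infty\alpha}=0$ hold \emph{everywhere}, the field $\partial_\rho=\rho^{-1}\bm{Z}_\infty$ satisfies $g(\partial_\rho,\partial_\rho)=4\rho^{-2}$ and is $g$-orthogonal to each slice $M_\rho=M\times\set{\rho}$; hence $g=\tfrac4{\rho^2}d\rho^2+g_\rho$ for a smooth (up to $\rho=0$) family $g_\rho$ of metrics on $M$. With $s=2\log\rho$, so that $\partial_s=\tfrac12\rho\partial_\rho=\tfrac12\bm{Z}_\infty$ and $g=ds^2+g_s$, the formula for the Laplacian of a Riemannian product gives $\Delta=-\partial_s^2-\tfrac12(\partial_s\log\det g_s)\partial_s+\Delta_{g_s}$ (with $\det$ taken in a fixed local coframe on $M$). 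As $\partial_s^2=\tfrac14(\rho\partial_\rho)^2$, the first term is exactly $-\tfrac14(\rho\partial_\rho)^2$. For the second, fix an admissible coframe $\set{\theta^0=\theta,\theta^\alpha,\theta^{\conj{\alpha}}}$ with dual frame $\set{T,Z_\alpha,Z_{\conj{\alpha}}}$, and set $G_{IJ}=g(\bm{Z}_I,\bm{Z}_J)$, which are smooth up to $M$ with $G_{00}|_M=1$, $G_{\alpha\conj{\beta}}|_M=h_{\alpha\conj{\beta}}$, $G_{0\alpha}|_M=G_{\alpha\beta}|_M=0$. From $\bm{Z}_0=\rho^2T$, $\bm{Z}_\alpha=\rho Z_\alpha$ one gets $\det g_\rho=\rho^{-(4n+4)}(\det\Theta)^2\det\tilde G$, where $\tilde G=(G_{IJ})_{I,J\neq\infty}$ is smooth and nondegenerate near $M$ and $\det\Theta\neq0$ is the coframe determinant; hence $\log\det g_\rho=-(4n+4)\log\rho+(\text{smooth in }\rho)$, and since $\partial_s\log\rho=\tfrac12$ while $\partial_s$ of any smooth function of $\rho$ lies in $\rho\,C^\infty(X)$, the second term becomes $\tfrac{n+1}2\rho\partial_\rho+\rho\Psi_1$ with $\Psi_1\in\thetadiff(X)$ (namely $\rho$ times a smooth multiple of $\bm{Z}_\infty$).

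\emph{The slice Laplacian.} It remains to show $\Delta_{g_\rho}=\rho^2\Delta_b-\rho^4T^2+\rho\Psi_2$ with $\Psi_2\in\thetadiff(X)$. Inverting $g_\rho={}^{\mathsf T}\Theta\,\mathcal D\,\tilde G\,\mathcal D\,\Theta$ with $\mathcal D=\diag(\rho^{-2},\rho^{-1},\dots,\rho^{-1})$ and using that $\tilde G|_M$ is block-diagonal (index $0$ versus indices $\alpha,\conj{\alpha}$, by the vanishing of $G_{0\alpha}|_M$), the dual metric splits into just two terms $g_\rho^{jk}=\rho^2\mathsf A^{jk}+\rho^4\mathsf C^{jk}$, where $\mathsf A^{jk}$, $\mathsf C^{jk}$ are smooth up to $M$, $\mathsf A$ is valued in $\mathbb CH\otimes\mathbb CH$ with $\mathsf A|_M=L^{jk}$ the contravariant Levi form, and $\mathsf C|_M=T^jT^k$; the point is that every $T$-bearing entry of $g_\rho^{jk}$ carries a factor $\rho^4$, again because $G_{0\alpha}$ and $G_{00}-1$ vanish on $M$. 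Substituting into $\Delta_{g_\rho}=-w^{-1}\partial_j(w\,g_\rho^{jk}\partial_k\,\cdot\,)$ with $w=\rho^{2n+2}\sqrt{\lvert\det g_\rho\rvert}\in C^\infty$ nonvanishing, all powers of $\rho$ pass freely through the $M$-derivatives $\partial_j$ and through $w^{-1}$, and one uses repeatedly $\rho^aZ_IZ_J=\rho^{\,a-d_I-d_J}\bm{Z}_I\bm{Z}_J$ and $\rho^aZ_I=\rho^{\,a-d_I}\bm{Z}_I$, with $d_0=2$, $d_\alpha=d_{\conj{\alpha}}=1$, valid because each $\bm{Z}_I$ annihilates $\rho$. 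The $\rho^2\mathsf A$ term yields $\rho^2$ times a second-order operator $P(\rho)$ on $M$ whose value $P(0)$ has the same second-order part as $\Delta_b$ (both being $-h^{\alpha\conj{\beta}}(Z_\alpha Z_{\conj{\beta}}+Z_{\conj{\beta}}Z_\alpha)$ modulo first order) and involves no $T$; since $P(0)$ is formally self-adjoint with respect to $\theta\wedge(d\theta)^n$ (the renormalized slice volume $w|_M\,dx$ being a constant multiple of it), $P(0)-\Delta_b$ is of order zero, and as $P(\rho)-P(0)$ is $\rho$ times a smooth family of second-order operators whose principal part remains $\mathbb CH\otimes\mathbb CH$-valued, one gets $\rho^2P(\rho)\in\rho^2\Delta_b+\rho\thetadiff(X)$. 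The $\rho^4\mathsf C$ term yields $\rho^4$ times a second-order operator whose value at $\rho=0$ has second-order part $-T^2$, contributing exactly $-\rho^4T^2$ modulo terms absorbed into $\rho\thetadiff(X)$ (e.g.\ $\rho^4T=\rho^2\bm{Z}_0$, $\rho^5T^2=\rho\bm{Z}_0^2$). Summing everything gives $\Delta=-\tfrac14(\rho\partial_\rho)^2+\tfrac{n+1}2\rho\partial_\rho+\rho^2\Delta_b-\rho^4T^2+\rho\Psi$ with $\Psi=\Psi_1+\Psi_2\in\thetadiff(X)$.

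\emph{Main obstacle.} The delicate step is the last one: one must keep track of precisely which power of $\rho$ accompanies each frame-derivative in order to separate the two terms that survive undivided by $\rho$ — namely $\rho^2\Delta_b$ and $-\rho^4T^2$ — from everything absorbed into $\rho\Psi$. Concretely, no term of the shape $\rho^2T$, $\rho^2(\text{order }0)$, or $\rho^3(\text{order }0)$ may appear outside those two operators; this rests on the boundary vanishing of $G_{0\alpha}$, $G_{\alpha\beta}$, $G_{00}-1$ (which pushes all $T$'s in the dual metric to order $\rho^4$) and on identifying the leading slice operator with $\Delta_b$ itself up to an order-zero, hence $\rho$-absorbable, error, for which formal self-adjointness with respect to $\theta\wedge(d\theta)^n$ is the clean input.
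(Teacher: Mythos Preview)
Your argument is correct, though it takes a somewhat different route from the paper's for the slice Laplacian $\Delta_{g_\rho}$. Both proofs start from the same warped-product formula (the paper quotes it from Guillarmou--S\`a Barreto as \eqref{eq:ACHLaplacian}); the divergence is in how one identifies $\Delta_{g_\rho}$ with $\rho^2\Delta_b-\rho^4T^2$ modulo $\rho\,\thetadiff(X)$.

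The paper's approach (Lemma~\ref{lem:parametrized_laplacian}) is to write $\Delta_{g_\rho}$ in terms of the Tanaka--Webster connection $\nabla^{\mathrm{TW}}$ plus a difference tensor $K$, and then observe that the contracted $K$-term vanishes on $M$ because $\nabla^{\mathrm{TW}}$ annihilates the model metric $\bm\theta^2+2h_{\alpha\conj\beta}\bm\theta^\alpha\bm\theta^{\conj\beta}$. This gives the identification with $\rho^2\Delta_b-\rho^4T^2$ directly from the second-derivative term $-(k_\rho^{-1})^{ij}\nabla^{\mathrm{TW}}_i\nabla^{\mathrm{TW}}_j$. Your approach instead stays in divergence form, splits $g_\rho^{-1}=\rho^2\mathsf A+\rho^4\mathsf C$ by the block structure of $\tilde G$, and then matches the leading $H$-block operator $P(0)$ with $\Delta_b$ by the nice observation that both are real, formally self-adjoint with respect to $\theta\wedge(d\theta)^n$, and share a second-order part, so their difference is order zero and hence absorbed into $\rho\Psi$. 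The paper's route is more geometric (it makes explicit why the Tanaka--Webster connection is the natural comparison object), while yours is more elementary in that it never invokes $\nabla^{\mathrm{TW}}$ except through $\Delta_b$ itself and replaces the parallelism of $h$ by a symmetry argument.

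One small imprecision: your claim $\mathsf C|_M=T^jT^k$ is not literally true, since $g_\rho^{0\alpha}=\rho^3(G^{-1})^{0\alpha}$ with $(G^{-1})^{0\alpha}|_M=0$ gives $\mathsf C^{0\alpha}|_M=\partial_\rho(G^{-1})^{0\alpha}|_M$, which need not vanish. This does not affect the conclusion, however, because $\rho^4\mathsf C^{0\alpha}TZ_\alpha=\rho\,\mathsf C^{0\alpha}\bm Z_0\bm Z_\alpha\in\rho\,\thetadiff(X)$ regardless, exactly as you note for the higher-order corrections.
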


We start with the following expression of $g$, where $k$ is a 2-tensor over the subbundle
whose complexification is spanned by $\set{\bm{Z}_0,\bm{Z}_\alpha,\bm{Z}_{\conj{\alpha}}}$.
Here, $\set{\bm{\theta},\bm{\theta}^\alpha,\bm{\theta}^{\conj{\alpha}}}$ denotes the dual coframe:
\begin{equation*}
	g=4\frac{d\rho^2}{\rho^2}+k,\qquad
	k=\bm{\theta}^2+2\tensor{h}{_\alpha_{\conj{\beta}}}\bm{\theta}^\alpha\bm{\theta}^{\conj{\beta}}+O(\rho).
\end{equation*}
If we identify $(\thetatangent)|_{\mathring{X}}$ with $T\mathring{X}$,
then on each hypersurface $M_\rho=M\times\set{\rho}$, $k$ gives a Riemannian metric of $M_\rho$.
By the standard identification $M_\rho\cong M$, we can regard $k$ as
a 1-parameter family $k_\rho$ of Riemannian metrics on $M$. In this sense, by abusing the notation we write
\begin{equation}
	\label{eq:normalized_ACH}
	g=4\frac{d\rho^2}{\rho^2}+k_\rho.
\end{equation}
Let $\nabla^{k_\rho}$ be the Levi-Civita connection of $k_\rho$
and $\Delta^{k_\rho}\colon C^\infty(M)\longrightarrow C^\infty(M)$ the associated Laplacian on functions.
Then, as stated in~\cite{Guillarmou_SaBarreto_08}*{Equation (5.1)}, $\Delta$ is given by
\begin{equation}
	\label{eq:ACHLaplacian}
	\Delta=-\frac{1}{4}(\rho\partial_\rho)^2+\frac{n+1}{2}\rho\partial_\rho+\Delta^{k_\rho}
	-\frac{1}{8}\rho\partial_\rho(\log\abs{\det k_\rho})\rho\partial_\rho,
\end{equation}
where $\det k_\rho$ is the determinant of the matrix representing $k_\rho$ with respect to
$\set{\rho^2T,\rho Y_i}$.
Therefore, to show Proposition \ref{prop:ACH_Laplacian}, it suffices to verify the following lemma.

\begin{lem}
	\label{lem:parametrized_laplacian}
	In the situation above, $\Delta^{k_\rho}$ is a $\Theta$-differential operator and is expressed as
	\begin{equation}
		\label{eq:Laplacian_of_metric_on_level_sets}
		\Delta^{k_\rho}=\rho^2\Delta_b-\rho^4T^2+\rho\Psi,
		\qquad\Psi\in\thetadiff(X).
	\end{equation}
\end{lem}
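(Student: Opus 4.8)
The plan is to compute $\Delta^{k_\rho}$ directly from the formula $\Delta^{k_\rho}u = -\frac{1}{\sqrt{\det k_\rho}}\,\partial_i\!\bigl(\sqrt{\det k_\rho}\,k_\rho^{ij}\partial_j u\bigr)$, working throughout in the $\Theta$-framework. The key point is a dictionary between ordinary tensor calculus on the level sets $M_\rho\cong M$ and $\Theta$-tensor calculus on $X$: in the frame $\set{\rho^2 T,\rho Y_i}$ (the spatial part of $\set{\bm{Z}_I}$), the metric $k_\rho$ has components $\tensor{k}{_0_0}=1+O(\rho)$, $\tensor{k}{_\alpha_{\conj\beta}}=\tensor{h}{_\alpha_{\conj\beta}}+O(\rho)$, $\tensor{k}{_0_\alpha}=O(\rho)$, $\tensor{k}{_\alpha_\beta}=O(\rho)$, all of which are $C^\infty$ up to the boundary as $\Theta$-tensor components. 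What I would emphasize first is that $\Delta^{k_\rho}$, a priori built from $\partial_i$ (the frame vector fields $T$, $Y_i$ on $M$, together with the inverse-metric and Christoffel coefficients), becomes, when the frame vector fields are rescaled to $\bm Z_0=\rho^2 T$ and $\bm Z_\alpha=\rho Z_\alpha$, a polynomial expression in $\mathcal V_\Theta$ with coefficients smooth up to $\rho=0$; this is exactly the assertion that $\Delta^{k_\rho}\in\thetadiff(X)$. First I would write $\Delta^{k_\rho}=-k_\rho^{ij}\partial_i\partial_j+(\text{lower order})$ and translate each piece into $\bm Z$'s: $k_\rho^{ij}\partial_i\partial_j$ becomes $\tensor{(k_\rho)}{^I^J}\bm Z_I\bm Z_J$ (spatial indices only) up to reordering terms, which introduces first-order $\Theta$-operators with smooth coefficients, hence lands in $\thetadiff(X)$.

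Second, I would extract the leading behavior in $\rho$. Because $\bm Z_0=\rho^2 T$ and $\bm Z_\alpha=\rho Z_\alpha$, the principal (second-order) part $\tensor{(k_\rho)}{^I^J}\bm Z_I\bm Z_J$ contributes: from the $\alpha\conj\beta$ block, $-\tensor{h}{^\alpha^{\conj\beta}}\rho^2 Z_\alpha Z_{\conj\beta}$ up to $O(\rho^3)$, which is $\rho^2\Delta_b$ up to the first-order correction absorbed into $\rho\Psi$; from the $00$ block, $-(\rho^2 T)^2=-\rho^4 T^2$ up to lower order in $\rho^4$ times $\thetadiff$; the mixed $0\alpha$ entries are $O(\rho)$ as metric coefficients and carry the rescaled frame vectors $\bm Z_0\bm Z_\alpha$, so they contribute $\rho\cdot\thetadiff$. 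The first-order terms of $\Delta^{k_\rho}$ — those coming from $\partial_i\bigl(\sqrt{\det k_\rho}\,k_\rho^{ij}\bigr)$ — always come with at least one rescaled frame vector that carries a factor $\rho$ (since $T=\rho^{-2}\bm Z_0$ and $Z_\alpha=\rho^{-1}\bm Z_\alpha$, but each such operator in $\thetadiff$ already absorbs the $\rho$), and the derivatives of the smooth coefficients $\tensor{(k_\rho)}{_I_J}$ are themselves smooth; after carefully tracking powers one finds every first-order term is $O(\rho)\cdot\thetadiff$. Summing, $\Delta^{k_\rho}=\rho^2\Delta_b-\rho^4 T^2+\rho\Psi$ with $\Psi\in\thetadiff(X)$, which is precisely \eqref{eq:Laplacian_of_metric_on_level_sets}.

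I expect the main obstacle to be the bookkeeping in the second step: one must verify that after converting the Laplacian's coefficients $k_\rho^{ij}$ and $\partial_i(\sqrt{\det k_\rho}\,k_\rho^{ij})$ into the $\bm Z$-frame, the \emph{entire} discrepancy between $\Delta^{k_\rho}$ and the clean leading part $\rho^2\Delta_b-\rho^4 T^2$ is genuinely divisible by $\rho$ as a $\Theta$-differential operator — not merely $O(\rho)$ coefficient-wise in some fixed ordinary frame, which would be a weaker statement. The subtlety is that $\Delta_b$ itself is only the \emph{leading} symbol-level approximation of the $\alpha\conj\beta$ block; the genuine second-order operator from that block is $-\tensor{(k_\rho)}{^\alpha^{\conj\beta}}\bm Z_\alpha\bm Z_{\conj\beta}$ symmetrized, and one needs that $\tensor{(k_\rho)}{^\alpha^{\conj\beta}}=\tensor{h}{^\alpha^{\conj\beta}}+\rho(\cdots)$ together with the fact that $\Delta_b$ is defined intrinsically so that the $\rho$-derivative-of-coefficient terms and the reordering commutators all land in $\rho\,\thetadiff(X)$. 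Once this divisibility is confirmed term by term, Proposition \ref{prop:ACH_Laplacian} follows by substituting \eqref{eq:Laplacian_of_metric_on_level_sets} into \eqref{eq:ACHLaplacian} and noting that the extra term $-\frac18\rho\partial_\rho(\log\abs{\det k_\rho})\rho\partial_\rho$ has a coefficient that vanishes at $\rho=0$ (since $\det k_\rho$ is smooth and nonzero there, its log has smooth $\rho\partial_\rho$-derivative, which is $O(\rho)$ after one more $\rho\partial_\rho$ only if... in fact $\rho\partial_\rho(\log\abs{\det k_\rho})$ is smooth and $O(\rho)$ because $\det k_\rho=\det k_0+O(\rho)$), so that whole term is absorbed into $\rho\Psi$.
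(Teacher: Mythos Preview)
Your outline is reasonable in spirit but has two gaps---one technical, one substantive---and the paper handles both by a cleaner route.

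The technical point: the formula $\Delta^{k_\rho}u=-\frac{1}{\sqrt{\det k_\rho}}\partial_i(\sqrt{\det k_\rho}\,k_\rho^{ij}\partial_j u)$ holds in a \emph{coordinate} frame, but you then work in the non-holonomic frame $\{\bm{Z}_0,\bm{Z}_\alpha,\bm{Z}_{\conj\alpha}\}$. On a contact manifold this frame has nonzero structure constants at leading order---for instance $[\bm{Z}_\alpha,\bm{Z}_{\conj\beta}]=-i\tensor{h}{_\alpha_{\conj\beta}}\bm{Z}_0+O(\rho)$---so the divergence formula acquires extra terms. You should instead write $\Delta^{k_\rho}f=-(k_\rho^{-1})^{ij}(\bm{Z}_i\bm{Z}_j f-\Gamma^k_{ij}\bm{Z}_k f)$ with the Levi--Civita Christoffel symbols in that frame.

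The substantive gap is the one you yourself flag: showing the first-order part lies in $\rho\,\thetadiff(X)$. Your power-counting argument (``each rescaled frame vector carries a factor $\rho$'') does not settle it, because the contracted Christoffel symbols $(k_\rho^{-1})^{ij}\Gamma^k_{ij}$ pick up $O(1)$ contributions from the structure constants above. What is actually needed is that the \emph{traced} structure constants $c^i_{il}$ vanish at $\rho=0$; this can be checked by hand, but you have not done it, and it is precisely the point where the argument has content.

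The paper sidesteps this bookkeeping by comparing with the Tanaka--Webster connection rather than computing Levi--Civita directly. Writing $\nabla^{k_\rho}=\nabla^{\mathrm{TW}}+K$, one gets
\[
\Delta^{k_\rho}f=-(k_\rho^{-1})^{ij}\nabla^{\mathrm{TW}}_i\nabla^{\mathrm{TW}}_j f-(k_\rho^{-1})^{ij}(k_\rho^{-1})^{kl}K_{kij}\nabla^{\mathrm{TW}}_l f.
\]
The first term is visibly $\rho^2\Delta_b-\rho^4T^2+\rho\Psi'$. For the second, the torsion part of $K_{kij}$ drops out upon contraction because $\tensor{\Tor}{^j_j_k}=0$ for Tanaka--Webster, and the remaining metric-derivative part $\nabla^{\mathrm{TW}}(k_\rho)$ vanishes at $\rho=0$ since $\nabla^{\mathrm{TW}}$ annihilates the model metric $\bm{\theta}^2+2\tensor{h}{_\alpha_{\conj\beta}}\bm{\theta}^\alpha\bm{\theta}^{\conj\beta}$. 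This single compatibility fact is what makes the $O(\rho)$ divisibility transparent---exactly the step your proposal leaves open.
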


\begin{proof}
	We compute using a local frame $\set{\bm{Z}_i}=\set{\bm{Z}_0,\bm{Z}_\alpha,\bm{Z}_{\conj{\alpha}}}$
	of $TM\cong TM_\rho$.
	Let us introduce the tensor $K$ defined by
	\begin{equation*}
		\nabla^{k_\rho}_{\bm{Z}_i}\bm{Z}_j=\nabla^\mathrm{TW}_{\bm{Z}_i}\bm{Z}_j+\tensor{K}{^k_i_j}\bm{Z}_k,
	\end{equation*}
	which measures the difference of $\nabla^{k_\rho}$ and the Tanaka--Webster connection $\nabla^\mathrm{TW}$.
	Then we obtain
	\begin{equation*}
		\Delta^{k_\rho}f
		=-\tensor{(k_\rho^{-1})}{^i^j}\tensor*{\nabla}{^{\mathrm{TW}}_i}\tensor*{\nabla}{^{\mathrm{TW}}_j}f
		-\tensor{(k_\rho^{-1})}{^i^j}\tensor{(k_\rho^{-1})}{^k^l}\tensor{K}{_k_i_j}
		\tensor*{\nabla}{^{\mathrm{TW}}_l}f,
		\qquad f\in C^\infty(M),
	\end{equation*}
	where the upper index of $K$ is lowered by $k_\rho$.
	The first term of the right-hand side expands as $\rho^2\Delta_bf-\rho^4T^2f+\rho\Psi'f$,
	where $\Psi'\in\thetadiff(X)$. On the other hand, one can show that
	\begin{equation*}
		\tensor{K}{_k_i_j}=
		\frac{1}{2}(\tensor*{\nabla}{^{\mathrm{TW}}_i}\tensor{(k_\rho)}{_j_k}
		+\tensor*{\nabla}{^{\mathrm{TW}}_j}\tensor{(k_\rho)}{_i_k}
		-\tensor*{\nabla}{^{\mathrm{TW}}_k}\tensor{(k_\rho)}{_i_j}
		-\tensor{\Tor}{_k_i_j}+\tensor{\Tor}{_i_j_k}+\tensor{\Tor}{_j_i_k}),
	\end{equation*}
	where $\tensor{\Tor}{^k_i_j}$ is the torsion of the Tanaka--Webster connection and again the first index is
	lowered by $k_\rho$. Since $\tensor{(k_\rho^{-1})}{^i^j}\tensor{(k_\rho^{-1})}{^k^l}
	(\tensor{\Tor}{_k_i_j}-\tensor{\Tor}{_i_j_k}-\tensor{\Tor}{_j_i_k})$ equals
	$-2\tensor{(k_\rho^{-1})}{^k^l}\tensor{\Tor}{^j_j_k}$ and $\tensor{\Tor}{^j_j_k}$ is actually zero, we have
	\begin{equation}
		\label{eq:diff_LC_TW_contracted}
		\tensor{(k_\rho^{-1})}{^i^j}\tensor{(k_\rho^{-1})}{^k^l}\tensor{K}{_k_i_j}=
		\frac{1}{2}\tensor{(k_\rho^{-1})}{^i^j}\tensor{(k_\rho^{-1})}{^k^l}
		(\tensor*{\nabla}{^{\mathrm{TW}}_i}\tensor{(k_\rho)}{_j_k}
		+\tensor*{\nabla}{^{\mathrm{TW}}_j}\tensor{(k_\rho)}{_i_k}
		-\tensor*{\nabla}{^{\mathrm{TW}}_k}\tensor{(k_\rho)}{_i_j}).
	\end{equation}
	Since $\nabla^\mathrm{TW}$ annihilates
	$\bm{\theta}^2+2\tensor{h}{_\alpha_{\conj{\beta}}}\bm{\theta}{^\alpha}\bm{\theta}{^{\conj{\beta}}}$,
	the right-hand side vanishes at $\rho=0$,
	from which we conclude that \eqref{eq:Laplacian_of_metric_on_level_sets} holds.
\end{proof}

\subsection{Construction of $P^g_{2k}$ and $Q^g_\theta$}

Proposition \ref{prop:ACH_Laplacian} provides sufficient knowledge of $\Delta$ to analyze
our Dirichlet-type problems.

\begin{thm}
	\label{thm:dn_operator}
	Let $g$ be a $C^\infty$-smooth ACH metric on a $(2n+2)$-dimensional $\Theta$-manifold $X$, and
	$T^{1,0}M$ its CR structure at infinity. We take a contact form $\theta$ on $M$ and $\rho$ denotes the
	associated model boundary defining function. Let $k$ be a positive integer.
	Then, for any real-valued function $f\in C^\infty(M)$, there exists $u\in C^\infty(\mathring{X})$ of the form
	\begin{equation}
		\label{eq:form_of_generalized_eigenfunction}
		u=\rho^{n+1-k}F+\rho^{n+1+k}\log\rho\cdot G,\qquad F,G\in C^\infty(X),\qquad F|_{\partial X}=f
	\end{equation}
	that solves
	\begin{equation}
		\label{eq:formal_eigenfunction_equation}
		\left(\Delta-\frac{(n+1)^2-k^2}{4}\right)u=O(\rho^\infty).
	\end{equation}
	The function $F$ is unique modulo $O(\rho^{2k})$, and $G$ is unique modulo $O(\rho^\infty)$.
	Moreover, there is a differential operator $P^g_{2k}\colon C^\infty(M)\longrightarrow C^\infty(M)$
	determined by $g$ and $\theta$ such that
	\begin{equation}
		\label{eq:definition_of_P}
		G|_M=c_kP^g_{2k}f,\qquad c_k=\frac{2\cdot(-1)^{k+1}}{k!(k-1)!}
	\end{equation}
	with the form
	\begin{equation}
		\label{eq:principal_part_of_GJMS}
		P^g_{2k}=\prod_{j=0}^{k-1}(\Delta_b+i(k-1-2j)T)+(\text{an operator with Heisenberg order $\le 2k-1$}),
	\end{equation}
	where $\Delta_b$ is the sub-Laplacian and $T$ is the Reeb vector field.
	The operators $P^g_{2k}$ are formally self-adjoint, and $P^g_{2n+2}$ annihilates constant functions.
\end{thm}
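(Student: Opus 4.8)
The argument rests on the explicit shape of $\Delta$ furnished by Proposition~\ref{prop:ACH_Laplacian}. First I would record the indicial computation: applying the right-hand side of~\eqref{eq:ACHLaplacianWithSublaplacian} to $\rho^s$ shows that $\Delta-\frac{(n+1)^2-k^2}{4}$ has indicial polynomial $-\frac14\bigl(s-(n+1-k)\bigr)\bigl(s-(n+1+k)\bigr)$, so its roots are $n+1\pm k$. Writing $u=\rho^{n+1-k}\sum_{j\ge0}\rho^j f_j$ with $f_0=f$ and feeding this into~\eqref{eq:formal_eigenfunction_equation}, the coefficient of $\rho^{n+1-k+j}$ produces the recursion
\begin{equation*}
	-\tfrac14\,j(j-2k)\,f_j+\Delta_b f_{j-2}-T^2 f_{j-4}+\bigl(\text{terms from }\rho\Psi\text{ acting on }f_0,\dots,f_{j-1}\bigr)=0,
\end{equation*}
where the last group is subleading because $\rho\Psi$ raises the power of $\rho$ by at least one. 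For $0<j<2k$ the factor $-\tfrac14\,j(j-2k)$ is nonzero and determines $f_j$; at $j=2k$ it vanishes, leaving an obstruction $\mathcal{R}=\Delta_b f_{2k-2}-T^2 f_{2k-4}+(\text{subleading})$. One then inserts the term $\rho^{n+1+k}\log\rho\cdot G$, $G=g_0+\rho g_1+\cdots$; a short computation with~\eqref{eq:ACHLaplacianWithSublaplacian} gives $\bigl(\Delta-\tfrac{(n+1)^2-k^2}{4}\bigr)\bigl(\rho^{n+1+k}\log\rho\cdot g_0\bigr)=-\tfrac k2\,\rho^{n+1+k}g_0+O\bigl(\rho^{n+2+k}\log\rho\bigr)$, so the obstruction at $\rho^{n+1+k}$ is removed exactly when $g_0=\tfrac2k\mathcal{R}$. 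Because $n+1-k+j=n+1+k$ only for $j=2k$ and there is no third indicial root, no second obstruction arises: the expansion continues with no further logarithmic terms, and after Borel summation we obtain $F,G\in C^\infty(X)$ with the required properties. Since $f_0,\dots,f_{2k-1}$ arise from $f$ by finitely many differentiations, $\mathcal{R}$ — and hence the operator defined by $G|_M=c_k P^g_{2k}f$ — is a differential operator determined by $g$ and $\theta$.

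For the uniqueness clauses I would apply the same indicial analysis to the difference of two solutions of the prescribed form, $w=\rho^{n+1-k}\phi+\rho^{n+1+k}\log\rho\cdot\gamma$, whose leading coefficient $\phi|_M$ vanishes: this forces the coefficients of $\rho^{n+1-k+j}$ for $1\le j\le 2k-1$ to vanish, then the coefficient of $\rho^{n+1+k}$ forces $\gamma|_M=0$ and the subsequent orders force $\gamma=O(\rho^\infty)$, while the coefficient of $\rho^{n+1+k}$ in $\rho^{n+1-k}\phi$ is unconstrained. Hence $F$ is determined modulo $O(\rho^{2k})$ and $G$ modulo $O(\rho^\infty)$; in particular $G|_M$, and therefore $P^g_{2k}$, is well defined.

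The leading form~\eqref{eq:principal_part_of_GJMS} comes from the fact that the Heisenberg principal symbol of $P^g_{2k}$ is insensitive both to the perturbation $\rho\Psi$ and to the noncommutativity of $\Delta_b$ and $T$. Indeed, in the recursion each factor contributed by $\rho\Psi$ advances the power of $\rho$ strictly faster than it advances the Heisenberg order — e.g.\ $\rho\cdot(\rho^2 T)$, $\rho\cdot(\rho Y_i)$ and $\rho\cdot(\rho\partial_\rho)$ contribute $(3,2)$, $(2,1)$ and $(1,0)$ to $(\text{power},\text{order})$ — whereas $\rho^2\Delta_b$ and $-\rho^4 T^2$ contribute $(2,2)$ and $(4,4)$; so any chain reaching $j=2k$ that uses $\rho\Psi$ at all has Heisenberg order $\le 2k-1$, and compositions of $\Delta_b$'s and $T^2$'s have a Heisenberg principal symbol independent of their order of composition. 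Thus $\sigma_{2k}(P^g_{2k})$ is governed by the reduced recursion $\beta_j=\frac{4}{j(j-2k)}\bigl(b\,\beta_{j-2}-t^2\beta_{j-4}\bigr)$, $\beta_0=1$, in the commuting symbols $b=\sigma(\Delta_b)$, $t=\sigma(T)$, and equals $\tfrac{2}{k c_k}\bigl(b\,\beta_{2k-2}-t^2\beta_{2k-4}\bigr)$. The remaining task is to identify this polynomial with $\prod_{j=0}^{k-1}\bigl(b+i(k-1-2j)t\bigr)$, the Heisenberg principal symbol of $\prod_{j=0}^{k-1}(\Delta_b+i(k-1-2j)T)$; this is precisely the scattering computation for the complex hyperbolic metric carried out by Graham~\cite{Graham_83} and by Hislop, Perry, and Tang~\cite{Hislop_Perry_Tang_08}, and it can also be obtained directly by substituting $x=\rho^2$ into~\eqref{eq:ACHLaplacianWithSublaplacian}, solving the resulting confluent-hypergeometric equation, and extracting the obstruction coefficient from ratios of Gamma functions. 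I expect this identification — together with the bookkeeping that certifies the $\rho\Psi$-contributions to be genuinely subleading — to be the main obstacle.

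Formal self-adjointness follows from the Green's-identity method of Graham and Zworski: for $f,\tilde f\in C^\infty(M)$ with associated solutions $u,\tilde u$, the quantity $\tilde u\,\Delta u-u\,\Delta\tilde u=\tilde u\bigl(\Delta-\tfrac{(n+1)^2-k^2}{4}\bigr)u-u\bigl(\Delta-\tfrac{(n+1)^2-k^2}{4}\bigr)\tilde u$ is $O(\rho^\infty)$, hence integrable against the volume form of $g$, which behaves like $\rho^{-2n-3}\,d\rho\wedge\theta\wedge(d\theta)^n$ near $M$; applying Green's formula on $\{\rho\ge\epsilon\}$ and letting $\epsilon\to0$, the finiteness of the bulk integral forces the coefficient of $\log\epsilon$ in the boundary integral over $\{\rho=\epsilon\}$ to vanish, and inserting~\eqref{eq:form_of_generalized_eigenfunction} shows that this coefficient is a nonzero multiple of $\int_M\bigl(\tilde f\,P^g_{2k}f-f\,P^g_{2k}\tilde f\bigr)\,\theta\wedge(d\theta)^n$. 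Finally, for $k=n+1$ equation~\eqref{eq:formal_eigenfunction_equation} reads $\Delta u=O(\rho^\infty)$, and since the Laplacian of $g$ annihilates constants, $u\equiv1$ (so $F\equiv1$, $G\equiv0$) is a solution of the prescribed form with $F|_M=1$; the uniqueness of $G|_M$ then gives $P^g_{2n+2}1=0$.
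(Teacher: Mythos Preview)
Your proposal is correct and follows essentially the same route as the paper: the indicial/recursion construction of $F$ and $G$ via Proposition~\ref{prop:ACH_Laplacian} and Borel's Lemma, the reduction of the Heisenberg principal part to the three-term recurrence solved by Graham's lemma (the paper records this as Lemma~\ref{lem:Masakis_lemma}), the self-adjointness via the logarithmic coefficient of a Green's-formula boundary term, and $P^g_{2n+2}1=0$ via $u\equiv 1$. The only cosmetic differences are that the paper carries the recursion at the operator level (introducing the normalizations $p_{l,s}=c_{2l,s}P_{2l,s}$) rather than at the symbol level, and packages self-adjointness through the integral of $\langle du_1,du_2\rangle-\tfrac14((n+1)^2-k^2)u_1u_2$ in the style of~\cite{Fefferman_Graham_02} rather than the antisymmetrized form $\tilde u\,\Delta u-u\,\Delta\tilde u$; both variants are equivalent.
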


\begin{proof}
	By Proposition \ref{prop:ACH_normalization}, we may assume that $g$ is normalized.
	For any $s\in\mathbb{R}$, let
	\begin{equation*}
		\Delta_s:=\Delta-s(n+1-s).
	\end{equation*}
	We consider $F\in C^\infty(M\times[0,\infty))$ with expansion
	\begin{equation*}
		F\sim\sum_{j=0}^\infty\rho^jf^{(j)},\qquad f^{(j)}\in C^\infty(M),
	\end{equation*}
	and try to solve $\Delta_s(\rho^{2(n+1-s)}F)=O(\rho^\infty)$ by determining $f^{(j)}$'s.
	Proposition \ref{prop:ACH_Laplacian} implies
	\begin{equation*}
		\Delta_s(\rho^{2(n+1-s)+j}f^{(j)})
		\sim\rho^{2(n+1-s)+j}\cdot\left(-\frac{1}{4}j(j-4s+2n+2)f^{(j)}+\rho D^j_sf^{(j)}\right),
	\end{equation*}
	where $D^j_s$ is a formal power series in $\rho$ with coefficients in the space of
	linear differential operators on $M$.
	This formula is used to determine the expansion of $F$.
	First we set $f^{(0)}$ to be the given function $f$. If we write $F_0=f^{(0)}=f$, then
	\begin{equation*}
		\Delta_s(\rho^{2(n+1-s)}F_0)=\rho^{2(n+1-s)}(0+\rho D_{0,s}f)=O(\rho^{2(n+1-s)+1}).
	\end{equation*}
	We inductively define $f^{(j)}$, as far as $j-4s+2n+2\not=0$, by
	\begin{equation*}
		\frac{1}{4}j(j-4s+2n+2)f^{(j)}=
		(\text{the $\rho^{2(n+1-s)+j}$-coefficient of $\Delta_s(\rho^{2(n+1-s)}F_{j-1})$}),
	\end{equation*}
	and set $F_j:=F_{j-1}+\rho^jf^{(j)}$ so that $\Delta_sF_j=O(\rho^{2(n+1-s)+j+1})$.
	Then $F_j$ is written as follows using some linear differential operators $p_{l,s}$ on $M$:
	\begin{equation}
		\label{eq:Expansion_F_with_p}
		F_j=f+\rho p_{1,s}f+\rho^2p_{2,s}f+\dots+\rho^{j}p_{j,s}f.
	\end{equation}
	If we furthermore set $p_{0,s}:=1$, then $p_{j,s}$ is recursively given by
	\begin{equation*}
		\label{eq:recursive_formula_p}
		p_{j,s}
		=\frac{4}{j(j-4s+2n+2)}\sum_{l=0}^{j-1}(\text{the $\rho^{j-1-l}$-coefficient of $D^l_s$})p_{l,s}.
	\end{equation*}
	
	In our situation, $s$ is taken to be $(n+1+k)/2$, which implies that $4s-2n-2=2k$ is a positive integer.
	Hence the procedure above only works while $j<2k$.
	As a result, $F_{2k-1}$ is determined so that $\Delta_s(\rho^{n+1-k}F_{2k-1})=O(\rho^{n+1+k})$.
	In the next step, we have seen that in general we cannot solve
	$\Delta_s(\rho^{n+1-k}F_{2k})=O(\rho^{n+1+k+1})$ by polynomials in $\rho$, and so here we need the first
	logarithmic term. For $g^{(j)}\in C^\infty(M)$, we have
	\begin{multline*}
		\Delta_{(n+1+k)/2}(\rho^{n+1+k+j}\log\rho\cdot g^{(j)})
		=-\frac{1}{4}j(j+2k)\rho^{n+1+k+j}\log\rho\cdot g^{(j)}
		-\frac{1}{2}(j+k)\rho^{n+1+k+j}g^{(j)}\\
		\mod \rho^{n+1+k+j+2}(C^\infty(M\times[0,\infty))+\log\rho\cdot C^\infty(M\times[0,\infty))).
	\end{multline*}
	So we can uniquely take $g^{(0)}$ so that
	$\Delta_s(\rho^{n+1-k}F_{2k-1}+\rho^{n+1+k}\log\rho\cdot g^{(0)})=O(\rho^{n+1+k})$ holds:
	\begin{equation}
		\label{eq:LowestTerm_G}
		g^{(0)}:=\frac{2}{k}\tilde{p}_{2k,(n+1+k)/2}f,\qquad\text{where}\quad
		\tilde{p}_{2k,s}=\sum_{l=0}^{2k-1}(\text{the $\rho^{2k-1-l}$-coefficient of $D^l_s$})p_{l,s}.
	\end{equation}
	We set $G_0:=g^{(0)}$ and $F_{2k}:=F_{2k-1}+\rho^{2k}f^{(2k)}$ by choosing
	$f^{(2k)}\in C^\infty(M)$ arbitrarily.
	Then $\Delta_s(\rho^{n+1-k}F_{2k}+\rho^{n+1+k}\log\rho\cdot G_0)=O(\rho^{n+1+k})$, or more precisely,
	\begin{equation*}
		\Delta_s(\rho^{n+1-k}F_{2k}+\rho^{n+1+k}\log\rho\cdot G_0)=\rho^{n+1+k}(R_0+\log\rho\cdot S_0),
	\end{equation*}
	where $R_0$, $S_0\in C^\infty(M\times[0,\infty))$ vanish on the boundary.
	Now we can continue the induction to determine $F_{2k+j}$ and $G_j$, which are polynomials in $\rho$ of
	degrees $2k+j$ and $j$, respectively, by adding higher-order terms to $F_{2k}$ and $G_0$ so that
	\begin{equation*}
		\Delta_s(\rho^{n+1-k}F_{2k+j}+\rho^{n+1+k}\log\rho\cdot G_j)=\rho^{n+1+k}(R_j+\log\rho\cdot S_j)
	\end{equation*}
	with some $R_j$, $S_j\in C^\infty(M\times[0,\infty))$ that are $O(\rho^{j+1})$,
	which are again uniquely achieved.
	By Borel's Lemma, we obtain $F$ and $G\in C^\infty(M\times[0,\infty))$ such that
	\begin{equation*}
		\Delta_s(\rho^{n+1-k}F+\rho^{n+1+k}\log\rho\cdot G)=O(\rho^\infty).
	\end{equation*}
	Thus we obtain a solution of the form \eqref{eq:form_of_generalized_eigenfunction}, and
	the ambiguity lives only in $f_{2k}$.

	Self-adjointness of $P^g_{2k}$ can be shown by, as~\cite{Fefferman_Graham_02}*{Proposition 3.3}, looking at
	the logarithmic term in the expansion of the integral of $\braket{du_1,du_2}-\frac{1}{4}((n+1)^2-k^2)u_1u_2$,
	where $u_1$ and $u_2$ solve \eqref{eq:formal_eigenfunction_equation}. We omit the details.
	The fact that $P^g_{2n+2}1=0$ is immediate from the definition because $u=1$ is of course harmonic.

	Formula \eqref{eq:principal_part_of_GJMS} is shown as follows, as in~\cite{Graham_83}.
	If we write $\rho D^j_s=\rho^2\Delta_b-\rho^4T^2+\tilde{D}^j_s$,
	then by Proposition \ref{prop:ACH_Laplacian},
	the Heisenberg order of the $\rho^w$-coefficient of $\tilde{D}^j_s$ is less than $w$. Hence, if we set
	\begin{equation*}
		p_{2l,s}=c_{2l,s}P_{2l,s},\qquad\text{where}\quad
		c_{2l,s}:=\prod_{\nu=0}^{l-1}\frac{1}{(l-\nu)(l-\nu-2s+n+1)},
	\end{equation*}
	then modulo differential operators of Heisenberg order $\le 2l-1$,
	\begin{equation*}
		\begin{split}
			P_{2l,(n+1+k)/2}
			&=\Delta_bP_{2l-2,(n+1+k)/2}-(l-1)(k-l+1)(iT)^2P_{2l-4,(n+1+k)/2}.
		\end{split}
	\end{equation*}
	Since $P^g_{2k}$ equals $P_{2k,(n+1+k)/2}$ by \eqref{eq:LowestTerm_G},
	the proof of \eqref{eq:principal_part_of_GJMS} is reduced to the next lemma.
\end{proof}

\begin{lem}
	\label{lem:Masakis_lemma}
	Let $k$ be a fixed nonnegative integer. If we define the polynomials $q_l\in\mathbb{C}[x,y]$ by
	\begin{equation*}
		q_0=1,\qquad
		q_1=x,\qquad
		q_l=xq_{l-1}-(l-1)(k-l+1)y^2q_{l-2},
	\end{equation*}
	then $q_k=\prod_{j=0}^{k-1}(x+(k-1-2j)y)$.
\end{lem}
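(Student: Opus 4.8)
The plan is to realize $q_k$ as a characteristic polynomial and to diagonalize the relevant matrix by means of the $k$-dimensional irreducible representation of $\mathfrak{sl}_2$. First, an immediate induction shows that $q_l$ is homogeneous of degree $l$ in $(x,y)$, so it suffices to prove the identity for $y=1$, that is, $q_k(x)=\prod_{j=0}^{k-1}(x+k-1-2j)$; the general case follows by substituting $x\mapsto x/y$ and multiplying through by $y^k$.

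Next I would invoke the standard correspondence between three-term recursions and tridiagonal matrices. Let $T$ be the $k\times k$ matrix with zero diagonal and off-diagonal entries $T_{m,m+1}=m$, $T_{m+1,m}=k-m$ for $1\le m\le k-1$, so that $T_{m,m+1}T_{m+1,m}=m(k-m)$. Expanding along the last row and column shows that the characteristic polynomials $D_l(x)=\det(xI_l-T^{(l)})$ of the leading principal submatrices satisfy $D_0=1$, $D_1=x$, and $D_l=xD_{l-1}-(l-1)(k-l+1)D_{l-2}$, which is exactly the recursion defining the $q_l$; hence $q_k(x)=\det(xI_k-T)$. The key point is that $T$ is the matrix of $E+F$ in the representation of $\mathfrak{sl}_2$ on homogeneous polynomials of degree $k-1$ in two variables $u,v$, where $E=u\,\partial_v$, $F=v\,\partial_u$, $H=u\,\partial_u-v\,\partial_v$: on the monomial basis $e_j=u^{k-j}v^{j-1}$ ($1\le j\le k$) one computes $Ee_j=(j-1)e_{j-1}$, $Fe_j=(k-j)e_{j+1}$, and $He_j=(k-2j+1)e_j$, so that $E+F$ has matrix $T$ while $H$ is diagonal with entries $k-1,k-3,\dots,-(k-1)$. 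Since $E+F$ and $H$ are conjugate inside $SL_2(\mathbb{C})$ — both being semisimple with eigenvalues $\pm1$ in the defining representation — they are conjugate in every finite-dimensional $\mathfrak{sl}_2$-representation, so they share the spectrum $\{k-1,k-3,\dots,-(k-1)\}$ here, and therefore
\begin{equation*}
	q_k(x)=\det\bigl(xI_k-(E+F)\bigr)=\prod_{j=1}^{k}\bigl(x-(k-2j+1)\bigr)=\prod_{j=0}^{k-1}(x+k-1-2j),
\end{equation*}
as claimed.

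The argument is elementary throughout; the only place where care is needed is the matching of index and sign conventions in the Jacobi-matrix recursion. A more pedestrian alternative would be a direct induction on $l$, but the intermediate polynomials $q_l$ (for $l<k$) generally do not factor over $\mathbb{Q}$, so one would have to carry explicit closed forms for all their coefficients: up to the substitution $x\mapsto x/2$ these $q_l$ are the monic Krawtchouk polynomials with $p=\tfrac{1}{2}$ and $N=k-1$, and the lemma then amounts to the fact that $q_k$ vanishes precisely on the support $\{-(k-1),-(k-3),\dots,k-1\}$ of the orthogonality measure — convenient to cite but less self-contained than the $\mathfrak{sl}_2$ computation above.
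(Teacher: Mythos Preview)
Your argument is correct. The reduction to $y=1$ by homogeneity is straightforward; the identification of $q_k(x,1)$ with the characteristic polynomial of the tridiagonal matrix $T$ via the standard Jacobi recursion is accurate (with $b_{l-1}c_{l-1}=(l-1)(k-l+1)$ as you wrote); and the $\mathfrak{sl}_2$ step is clean: in the $k$-dimensional irreducible module, $E+F$ has matrix $T$ in the monomial basis, $E+F$ and $H$ are $SL_2(\mathbb{C})$-conjugate (both semisimple with eigenvalues $\pm 1$ in the standard representation, and every finite-dimensional $\mathfrak{sl}_2$-module integrates to the simply connected group), so $T$ has the same spectrum $\{k-1,k-3,\dots,-(k-1)\}$ as $H$. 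The final product matches the claimed one because that spectrum is symmetric about $0$.

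As for comparison with the paper: the paper does not give its own proof of this lemma but defers to Graham's articles (\cite{Graham_83}*{2.5.~Proposition} and \cite{Graham_84}*{Section 1}). Your argument is therefore a genuine, self-contained addition rather than a rederivation of the paper's reasoning. The $\mathfrak{sl}_2$ viewpoint has the advantage of explaining \emph{why} the roots are the integers $k-1,k-3,\dots,-(k-1)$ (they are the weights of the irreducible module), whereas a direct induction---as you note---is awkward because the intermediate $q_l$ do not factor nicely. The side remark about Krawtchouk polynomials is plausible but inessential; the proof stands without it.
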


For the proof of Lemma~\ref{lem:Masakis_lemma}, see~\cite{Graham_83}*{2.5.~Proposition}
or~\cite{Graham_84}*{Section 1}.

One can also observe by the proof of Theorem \ref{thm:dn_operator} above
that the operator $P_{2k,s}$ is a polynomial with respect to $s$.
This allows us to construct $Q^g_\theta$ by the Graham--Zworski argument.
Here we do not take this route, as explained in Introduction, and prove the following theorem instead.

\begin{thm}
	\label{thm:log_expansion_definition_of_Q}
	Let $g$ be a $C^\infty$-smooth ACH metric on a $(2n+2)$-dimensional $\Theta$-manifold $X$, and
	$T^{1,0}M$ its CR structure at infinity. We take a contact form $\theta$ on $M$ and $\rho$ denotes the
	associated model boundary defining function. Then there exists $U\in C^\infty(\mathring{X})$ of the form
	\begin{equation}
		\label{eq:log_expansion}
		U=\log\rho+A+B\rho^{2n+2}\log\rho,\qquad
		A, B\in C^\infty(X),\qquad
		A|_M=0
	\end{equation}
	such that
	\begin{equation}
		\label{eq:equation_for_log_expansion}
		\Delta U=(n+1)/2\mod O(\rho^\infty).
	\end{equation}
	The function $A$ is unique modulo $O(\rho^{2n+2})$ and $B$ is unique modulo $O(\rho^\infty)$.
	Moreover, we define $Q^g_\theta\in C^\infty(M)$ by
	\begin{equation}
		B|_M=\frac{(-1)^n}{n!(n+1)!}Q^g_\theta.
	\end{equation}
	Then, for any two contact forms $\theta$ and $\Hat{\theta}=e^\Upsilon\theta$, the following holds:
	\begin{equation}
		\label{eq:Qg_transformation}
		Q^g_{\Hat{\theta}}=e^{-(n+1)\Upsilon}(Q^g_\theta+P^g_{2n+2}\Upsilon).
	\end{equation}
\end{thm}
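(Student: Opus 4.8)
The plan is to run, with $k=n+1$, the same formal-solution scheme as in the proof of Theorem~\ref{thm:dn_operator}, and then to extract the transformation law \eqref{eq:Qg_transformation} by comparing the functions $U$ and $\hat U$ built from $\theta$ and from $\hat\theta$.

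For the existence and uniqueness of $U$: since $\Delta_b$ and $T$ annihilate $\log\rho$ and, by Proposition~\ref{prop:ACH_Laplacian}, $\rho\Psi(\log\rho)\in\rho\,C^\infty(X)$, we have $\Delta(\log\rho)=\tfrac{n+1}{2}+O(\rho)$. One then removes the $O(\rho)$ error by adding a power series $A\sim\sum_{j\ge1}\rho^j a^{(j)}$, solving order by order exactly as in Theorem~\ref{thm:dn_operator}: the indicial factor acting on $\rho^j$ is $-\tfrac14 j(j-2n-2)$, which is invertible for $1\le j\le 2n+1$ but vanishes at $j=2n+2$, so the obstruction there is absorbed into the term $B\rho^{2n+2}\log\rho$, whose boundary coefficient is fixed by the same computation of $\Delta(\rho^{2n+2}\log\rho\cdot w)$ used in that proof, and no $(\log\rho)^2$ term is forced. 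Borel's lemma yields $U$ as in \eqref{eq:log_expansion}, with $A$ unique modulo $O(\rho^{2n+2})$ and $B$ modulo $O(\rho^\infty)$; in particular $B|_M$, hence $Q^g_\theta$, is well defined.

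For the transformation law, I would first determine how the model boundary defining function changes. Comparing the normalized expansions of $g$ in $\rho$ and in $\hat\rho$ --- whose leading parts are $4\rho^{-2}d\rho^2+\rho^{-4}\theta^2+2\rho^{-2}\tensor{h}{_\alpha_{\conj{\beta}}}\theta^\alpha\theta^{\conj{\beta}}$ and the same with hats --- and using $\hat h=e^\Upsilon h$, one finds that $\hat\rho/\rho$ extends to a positive $C^\infty$ function on $X$ with $(\hat\rho/\rho)|_M=e^{\Upsilon/2}$ (see also~\cite{Guillarmou_SaBarreto_08}); equivalently $\upsilon:=\log(\hat\rho/\rho)\in C^\infty(X)$ and $\upsilon|_M=\tfrac12\Upsilon$. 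Writing $\hat U=\log\hat\rho+\hat A+\hat B\,\hat\rho^{2n+2}\log\hat\rho$ for the function attached to $\hat\theta$ (so $\hat A|_M=0$, $\hat B|_M=\tfrac{(-1)^n}{n!(n+1)!}Q^g_{\hat\theta}$), substituting $\log\hat\rho=\log\rho+\upsilon$ and $\hat\rho^{2n+2}=\mu\rho^{2n+2}$ with $\mu:=e^{(2n+2)\upsilon}\in C^\infty(X)$, $\mu|_M=e^{(n+1)\Upsilon}$, and regrouping gives
\begin{equation*}
	\hat U=\log\rho+\bigl(\upsilon+\hat A+\hat B\mu\upsilon\rho^{2n+2}\bigr)+\hat B\mu\,\rho^{2n+2}\log\rho .
\end{equation*}
Thus $V:=\hat U-U$ has the form \eqref{eq:form_of_generalized_eigenfunction} with $k=n+1$: its smooth leading part has boundary value $\upsilon|_M=\tfrac12\Upsilon$, its smooth $\rho^{2n+2}\log\rho$-coefficient has boundary value $\mu|_M\hat B|_M-B|_M$, and $\Delta V=\Delta\hat U-\Delta U=O(\rho^\infty)$. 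The uniqueness part of Theorem~\ref{thm:dn_operator} then identifies this $\rho^{2n+2}\log\rho$-boundary value with $c_{n+1}P^g_{2n+2}(\tfrac12\Upsilon)=\tfrac{(-1)^n}{n!(n+1)!}P^g_{2n+2}\Upsilon$ (recall $c_{n+1}=\tfrac{2(-1)^n}{n!(n+1)!}$), i.e.
\begin{equation*}
	e^{(n+1)\Upsilon}Q^g_{\hat\theta}-Q^g_\theta=P^g_{2n+2}\Upsilon ,
\end{equation*}
which is \eqref{eq:Qg_transformation}.

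The construction of $U$ is routine given Proposition~\ref{prop:ACH_Laplacian}. I expect the real obstacle to be the second step: establishing precisely that the model defining function rescales by $e^{\Upsilon/2}$ at the boundary under $\theta\mapsto e^\Upsilon\theta$ --- the exponent $\tfrac12$ being exactly what reconciles the density weights on the two sides of \eqref{eq:Qg_transformation} --- together with the care needed to verify that $V=\hat U-U$ is \emph{exactly} of the admissible form \eqref{eq:form_of_generalized_eigenfunction}, with smooth coefficients and no $(\log\rho)^2$ term, so that Theorem~\ref{thm:dn_operator} applies without change.
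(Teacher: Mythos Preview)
Your proposal is correct and follows essentially the same route as the paper: the formal construction of $U$ via the indicial factor $-\tfrac14 j(j-2n-2)$ with the log term inserted at $j=2n+2$, and the derivation of \eqref{eq:Qg_transformation} by writing $\hat\rho=e^{\tilde\Upsilon/2}\rho$ with $\tilde\Upsilon|_M=\Upsilon$, expanding $\hat U-U$, and applying Theorem~\ref{thm:dn_operator} with $k=n+1$, are exactly the paper's arguments. The two points you flag as potential obstacles---the boundary value $e^{\Upsilon/2}$ of $\hat\rho/\rho$ and the admissible form of $\hat U-U$---are handled in the paper just as you outline, so there is no hidden difficulty.
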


\begin{proof}
	Again we may assume that $g$ is normalized.
	By Proposition \ref{prop:ACH_Laplacian}, $\Delta\log\rho$ is $C^\infty$-smooth up to the boundary
	and $(\Delta\log\rho)|_M=(n+1)/2$.
	Moreover, if $a$, $b\in C^\infty(M)$, then
	\begin{equation}
		\label{eq:ACHLaplacianActingOnSmoothTerm}
		\Delta(\rho^ja)=-\frac{1}{4}j(j-2n-2)\rho^ja+O(\rho^{j+1})
	\end{equation}
	and
	\begin{equation}
		\label{eq:ACHLaplacianActingOnLogTerm}
		\Delta(\rho^j\log\rho\cdot b)=
		-\frac{1}{2}(j-n-1)\rho^jb+O(\rho^{j+1})-\frac{1}{4}\log\rho\cdot(j(j-2n-2)\rho^jb+O(\rho^{j+1})),
	\end{equation}
	where the terms denoted by $O(\rho^{j+2})$ are all $C^\infty$-smooth.
	By using \eqref{eq:ACHLaplacianActingOnSmoothTerm} inductively, we can show that there is
	a unique finite expansion
	\begin{equation*}
		A_{2n+1}=\sum_{j=1}^{2n+1}\rho^ja^{(j)},\qquad a^{(j)}\in C^\infty(M)
	\end{equation*}
	such that $\Delta(\log\rho+F_{2n+1})=(n+1)/2+O(\rho^{2n+2})$.
	The next thing to do is to introduce a $(\rho^{2n+2}\log\rho)$-term so that $\Delta$ applied to it kills
	the $\rho^{2n+2}$-coefficient of the error term.
	This is possible in view of \eqref{eq:ACHLaplacianActingOnLogTerm} because $j-n-1$ is nonzero for $j=2n+2$:
	there uniquely exists $b^{(0)}\in C^\infty(M)$ for which
	\begin{equation*}
		\Delta(\log\rho+A_{2n+1}+\rho^{2n+2}\log\rho\cdot b^{(0)})=\frac{n+1}{2}+O(\rho^{2n+3}\log\rho).
	\end{equation*}
	We set $B_0=b^{(0)}$ while $A_{2n+2}=A_{2n+1}+\rho^{2n+2}a^{(2n+2)}$,
	where we choose $a^{(2n+2)}\in C^\infty(M)$ arbitrarily.
	Equation \eqref{eq:ACHLaplacianActingOnSmoothTerm} implies that
	$\Delta(\log\rho+A_{2n+2}+\rho^{2n+2}\log\rho\cdot B_0)=(n+1)/2+O(\rho^{2n+3}\log\rho)$ holds.
	Then, inductively in $k$, we can determine $a^{(2n+2+k)}$, $b^{(k)}\in C^\infty(M)$ such that
	\begin{equation*}
		A_{2n+2+k}=A_{2n+2+k-1}+\rho^{2n+2+k}a^{(2n+2+k)}\qquad\text{and}\qquad
		B_k=B_{k-1}+\rho^{2n+2+k}\log\rho\cdot b^{(k)}
	\end{equation*}
	satisfies $\Delta(\log\rho+A_{2n+2+k}+\rho^{2n+2}\log\rho\cdot B_k)=(n+1)/2+O(\rho^{2n+2+k+1}\log\rho)$.
	Finally, by Borel's Lemma, we obtain $A$, $B\in C^\infty(M\times[0,\infty))$ for which
	$\Delta(2\log\rho+A+\rho^{2n+2}\log\rho\cdot B)=n+1+O(\rho^\infty)$.
	The construction shows the ambiguity of $A$ and $B$ are as stated.

	To show \eqref{eq:Qg_transformation}
	we first remark that $\tilde{\Upsilon}|_M=\Upsilon$ if $\Hat{\rho}=e^{\tilde{\Upsilon}/2}\rho$ is the model
	boundary defining function for the new contact form $\Hat{\theta}=e^\Upsilon\theta$. Let
	$\Hat{U}=\log\Hat{\rho}+\Hat{A}+\Hat{B}\Hat{\rho}^{2n+2}\log\Hat{\rho}$
	be a solution of \eqref{eq:equation_for_log_expansion} associated to $\Hat{\rho}$. Then
	\begin{equation*}
		\Hat{U}
		=\log\rho+\frac{\tilde{\Upsilon}}{2}+\Hat{A}
		+e^{(n+1)\tilde{\Upsilon}}\Hat{B}\rho^{2n+2}\left(\log\rho+\frac{\tilde{\Upsilon}}{2}\right)
	\end{equation*}
	and hence
	\begin{equation*}
		\Hat{U}-U
		=\left(\frac{\tilde{\Upsilon}}{2}+\Hat{A}
			+\frac{1}{2}e^{(n+1)\tilde{\Upsilon}}\rho^{2n+2}\Hat{B}\tilde{\Upsilon}\right)
		+\rho^{2n+2}\log\rho\cdot(e^{(n+1)\tilde{\Upsilon}}\Hat{B}-B).
	\end{equation*}
	Since $\Hat{U}-U$ solves $\Delta(\Hat{U}-U)=O(\rho^\infty)$, by Theorem \ref{thm:dn_operator}, we have
	\begin{equation*}
		e^{(n+1)\Upsilon}\Hat{B}|_M-B|_M=2^{-1}c_{n+1}P^g_{2n+2}\Upsilon,
	\end{equation*}
	or equivalently $e^{(n+1)\Upsilon}Q^g_{\Hat{\theta}}-Q^g_\theta=P^g_{2n+2}\Upsilon$,
	which is \eqref{eq:Qg_transformation}.
\end{proof}

\subsection{Volume expansion}

We relate the integral of $Q^g_\theta$ with the volume expansion of $g$.
Since $dV_g$ extends to a nowhere vanishing section of the $\Theta$-volume bundle $\abs{\det\thetacotangent}$,
it diverges at the rate of $\rho^{-2n-3}$ at $\bdry X$ as the usual volume density on $X$.
Hence if $M$ is compact, for some arbitrarily fixed $\varepsilon_0$,
the volume of the subset $\set{\varepsilon\le\rho\le\varepsilon_0}\subset X$ has
the following asymptotic behavior when $\varepsilon\to 0$:
\begin{equation}
	\label{eq:volume_expansion}
	\Vol(\set{\varepsilon\le\rho\le\varepsilon_0})
	=\sum_{j=-2n-2}^{-1}c_j\varepsilon^j+L\log\frac{1}{\varepsilon}+O(1).
\end{equation}

\begin{prop}
	\label{prop:volume_log_term_and_Q}
	Let $M$ be compact, and $g$ a $C^\infty$-smooth ACH metric normalized with respect to $\theta$.
	Then the coefficient $L$ in \eqref{eq:volume_expansion} is given by
	\begin{equation}
		\label{eq:VolumeLogTermAndQ}
		L=\frac{2\cdot(-1)^{n+1}}{n!^2(n+1)!}\smash{\overline{Q}}^g,\qquad
		\smash{\overline{Q}}^g:=\int_MQ^g_\theta\,\theta\wedge(d\theta)^n=n!\int_MQ^g_\theta\,dV_{\theta^2+h}.
	\end{equation}
\end{prop}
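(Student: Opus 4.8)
The plan is to mimic the volume-renormalization argument of Fefferman and Graham~\cite{Fefferman_Graham_02} in the conformal case. Fix $\varepsilon_0$ small enough that $g$ is normalized on the collar $\set{0<\rho\le\varepsilon_0}$, where $\rho$ is the model boundary defining function for $\theta$, so that there $g=4\rho^{-2}d\rho^2+k_\rho$ with $k_\rho$ a smooth one-parameter family of Riemannian metrics on $M$; write $\Omega_\varepsilon=\set{\varepsilon\le\rho\le\varepsilon_0}$. Let $U=\log\rho+A+B\rho^{2n+2}\log\rho$ be the function furnished by Theorem~\ref{thm:log_expansion_definition_of_Q}, so that $\Delta U=(n+1)/2+O(\rho^\infty)$, $A$ and $B$ are $C^\infty$ up to $M$, $A|_M=0$, and $B|_M=\frac{(-1)^n}{n!(n+1)!}Q^g_\theta$. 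The idea is to evaluate $\int_{\Omega_\varepsilon}(\Delta U)\,dV_g$ in two ways and compare the $\log(1/\varepsilon)$ coefficients as $\varepsilon\to0$.

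Since $dV_g$ diverges at $M$ only at rate $\rho^{-2n-3}$, the error $O(\rho^\infty)$ integrates to $O(1)$ and $\int_{\Omega_\varepsilon}(\Delta U)\,dV_g=\frac{n+1}{2}\Vol(\Omega_\varepsilon)+O(1)$, whose $\log(1/\varepsilon)$ coefficient is $\frac{n+1}{2}L$ by~\eqref{eq:volume_expansion}. On the other hand, with the paper's sign convention (so that~\eqref{eq:ACHLaplacianWithSublaplacian} holds) $\Delta$ is minus the Laplace--Beltrami operator, and Green's identity gives $\int_{\Omega_\varepsilon}(\Delta U)\,dV_g=-\int_{\bdry\Omega_\varepsilon}(\partial_\nu U)\,dS$ with $\nu$ the outward unit normal. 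The $\set{\rho=\varepsilon_0}$ part of this integral does not depend on $\varepsilon$, hence is $O(1)$; on the slice $\set{\rho=\varepsilon}$ the outward normal points toward smaller $\rho$ and equals $-\tfrac{\varepsilon}{2}\partial_\rho$ by the normalized form of $g$, while the induced volume is $dV_{k_\varepsilon}$. Therefore
\[
	\int_{\Omega_\varepsilon}(\Delta U)\,dV_g=\frac{\varepsilon}{2}\int_M(\partial_\rho U)\,dV_{k_\varepsilon}+O(1),
	\qquad
	\partial_\rho U=\frac1\rho+\partial_\rho A+(2n+2)B\rho^{2n+1}\log\rho+(\cdots),
\]
where $(\cdots)$ gathers terms involving neither $\rho^{-1}$ nor $\rho^{2n+1}\log\rho$.

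The key remaining input is the elementary observation that, for any $f\in C^\infty(X)$, the function $\rho\mapsto\rho^{2n+2}\int_M f\,dV_{k_\rho}$ extends smoothly to $\rho=0$, with value $\int_M f|_M\,d\sigma$, where $d\sigma:=\lim_{\rho\to0}\rho^{2n+2}dV_{k_\rho}$; this follows at once from the $C^\infty$-smoothness of $g$ and the $\Theta$-framing $\bm{Z}_0=\rho^2T$, $\bm{Z}_\alpha=\rho Z_\alpha$, which moreover identifies $d\sigma$ with $dV_{\theta^2+h}$ via the normalization $\tensor{g}{_0_0}|_M=1$, $\tensor{g}{_\alpha_{\conj{\beta}}}|_M=\tensor{h}{_\alpha_{\conj{\beta}}}$. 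Consequently the smooth term $\partial_\rho A$ and the slice volumes produce no $\log$, whereas $\frac{\varepsilon}{2}(2n+2)\varepsilon^{2n+1}\log\varepsilon\int_M B\,dV_{k_\varepsilon}=(n+1)\log\varepsilon\int_M B|_M\,dV_{\theta^2+h}+o(1)$. Matching the $\log(1/\varepsilon)$ coefficients gives $\frac{n+1}{2}L=-(n+1)\int_M B|_M\,dV_{\theta^2+h}$, i.e.\ $L=-2\int_M B|_M\,dV_{\theta^2+h}$; substituting $B|_M=\frac{(-1)^n}{n!(n+1)!}Q^g_\theta$ and $\theta\wedge(d\theta)^n=n!\,dV_{\theta^2+h}$ gives~\eqref{eq:VolumeLogTermAndQ}.

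The one genuine point of care is the bookkeeping: one must make sure that the single term $B\rho^{2n+2}\log\rho$ in $U$ is the sole source of $\log$-terms on both sides of the identity --- this is why the normalized form of $g$ (which makes the $d\rho^2$-coefficient exactly $4\rho^{-2}$) and the honest smoothness of $A$ matter --- and one must keep track of all signs and constants, namely the sign convention for $\Delta$, the orientation of the outward normal on the two components of $\bdry\Omega_\varepsilon$, and the normalization of $d\sigma$ relative to $dV_{\theta^2+h}$ and $\theta\wedge(d\theta)^n$. Once these are handled the computation is entirely routine.
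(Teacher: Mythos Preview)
Your argument is correct and follows essentially the same route as the paper: integrate $\Delta U=(n+1)/2+O(\rho^\infty)$ over $\set{\varepsilon\le\rho\le\varepsilon_0}$, convert to a boundary integral via Green's formula using the outward normal $-\tfrac12\varepsilon\partial_\rho$, and read off the $\log(1/\varepsilon)$ coefficient. The only cosmetic difference is that the paper packages the fact ``$\rho^{2n+2}dV_{k_\rho}$ extends smoothly to $dV_{\theta^2+h}$'' by introducing the rescaled family $\tilde{k}_\rho=\delta_\rho^*k_\rho$ with $\tilde{k}_0=\theta^2+h$, whereas you state it directly; the computations and constants match.
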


\begin{proof}
	We use Green's formula.
	Let $g=4\rho^{-2}d\rho^2+k_\rho$, and we define the family $\tilde{k}_\rho$ of Riemannian metrics
	on $M$ by
	\begin{equation*}
		\tilde{k}_\rho=\delta_\rho^*k_\rho,\qquad\text{where}\quad
		\delta_\rho T=\rho^2 T\quad\text{and}\quad \delta_\rho Y_i=\rho Y_i.
	\end{equation*}
	Then $\tilde{k}_\rho$ smoothly extends to $\rho=0$ and $\tilde{k}_0=\theta^2+h$.
	Since the outward unit normal vector field along the hypersurface
	$M_\varepsilon=\set{\rho=\varepsilon}$ is $-\frac{1}{2}\varepsilon\partial_\rho$, for any $U\in C^2(X)$
	\begin{equation*}
		\int_{\varepsilon\le\rho\le\varepsilon_0}\Delta U\,dV_g
		=\int_{M_\varepsilon}\frac{1}{2}\varepsilon(\partial_\rho U) dV_{k_\varepsilon}+O(1)
		=\frac{1}{2}\varepsilon^{-2n-1}\int_{M_\varepsilon}(\partial_\rho U)dV_{\tilde{k}_\varepsilon}+O(1).
	\end{equation*}
	Now let $U$ be a solution of the Dirichlet problem in Theorem \ref{thm:log_expansion_definition_of_Q}.
	Then since $\Delta U=(n+1)/2+O(\rho^\infty)$, the equality above reads
	\begin{equation*}
		\frac{n+1}{2}\Vol(\set{\varepsilon\le\rho\le\varepsilon_0})
		=\frac{1}{2}\varepsilon^{-2n-1}\int_{M_\varepsilon}(\partial_\rho U)dV_{\tilde{k}_\varepsilon}+O(1).
	\end{equation*}
	Comparing the coefficients of $\log(1/\varepsilon)$ we can see
	\begin{equation*}
		\frac{n+1}{2}L=-(n+1)\int_M(B|_M)dV_{\theta^2+h}
		=\frac{(-1)^{n+1}}{n!^2}\int_MQ^g_\theta\,dV_{\theta^2+h}.
	\end{equation*}
	Thus we obtain \eqref{eq:VolumeLogTermAndQ}.
\end{proof}

\section{GJMS operators and $Q$-curvature}

\subsection{Invariance of $P^g_{2k}$ and $Q^g_\theta$ for approximately Einstein ACH metrics}
\label{subsec:GJMS}

Now we show Theorem \ref{thm:dn_operator_of_ACHE} by applying the results of the previous section
to approximate ACH-Einstein metrics $g$.
In order to carry out this idea, we have to discuss the dependence of $\Delta$ on the ambiguity of $g$
to ensure that it is not problematic for our construction.

\begin{lem}
	\label{lem:dependence_of_ACH_laplacian}
	Let $(M,T^{1,0}M)$ be a partially integrable CR manifold, and $g$ a $C^\infty$-smooth normalized ACH metric
	satisfying the approximate Einstein condition \eqref{eq:approx_Einstein}.
	If $F\in C^\infty(X)$, then $\Delta F$ modulo $O(\rho^{2n+3})$ is irrelevant to the ambiguity of $g$.
	Moreover, if $\rho$ is a $C^\infty$-smooth boundary defining function, then
	$\Delta(\log\rho)$ is irrelevant modulo $O(\rho^{2n+3}\log\rho)$ to the ambiguity of $g$.
\end{lem}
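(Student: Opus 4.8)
The plan is to reduce to normalized ACH metrics and then to isolate exactly how $\Delta$ depends on the remaining, purely tensorial, ambiguity. By Proposition~\ref{prop:ACH_normalization} we may assume $g$ is normalized with respect to $\theta$, and we write $\rho$ for the associated model boundary defining function. If $g'$ is a second $C^\infty$-smooth ACH metric, normalized with respect to $\theta$, satisfying~\eqref{eq:approx_Einstein}, and with the same CR structure at infinity, then by the uniqueness part of Theorem~\ref{thm:existence} the difference $\psi:=g'-g$ is a symmetric $2$-$\Theta$-tensor in $\rho^{2n+2}C^\infty(X;\Sym^2\thetacotangent)$ with $\tr_g\psi\in\rho^{2n+3}C^\infty(X)$; moreover, since both metrics are normalized, $\psi(\bm{Z}_\infty,\bm{Z}_I)=0$ for every $I$. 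In the fixed frame $\set{\bm{Z}_I}=\set{\bm{Z}_\infty,\bm{Z}_0,\bm{Z}_\alpha,\bm{Z}_{\conj\alpha}}$ this says that $\tensor{\psi}{_I_J}\in\rho^{2n+2}C^\infty(X)$, that $\tensor{\psi}{_\infty_I}=0$, and (using $\tensor{g}{^\infty^\infty}=\tfrac14$ and $\tensor{g}{^\infty^i}=0$) that the full trace $\tr_g\psi$ coincides with the spatial trace $\tensor{g}{^i^j}\tensor{\psi}{_i_j}$.

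For the first assertion I would use that $\Delta_g$ is a second-order $\Theta$-differential operator \emph{with no zeroth-order term} (it annihilates constants) whose coefficients, in the \emph{fixed} frame $\set{\bm{Z}_I}$ with its fixed structure functions, are universal smooth functions of the components $\tensor{g}{_I_J}$ and their $\bm{Z}_K$-derivatives: indeed $\Delta_g F=-\tensor{g}{^I^J}\bigl(\bm{Z}_I\bm{Z}_JF-\tensor{\Gamma}{^K_I_J}\bm{Z}_KF\bigr)$, and the $\tensor{\Gamma}{^K_I_J}$ are given by the Koszul formula in terms of the $\tensor{g}{_I_J}$'s, their $\bm{Z}_K$-derivatives, and the fixed structure functions. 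Since $\tensor{g'}{_I_J}-\tensor{g}{_I_J}=\tensor{\psi}{_I_J}\in\rho^{2n+2}C^\infty(X)$ and $\bm{Z}_K$ preserves $\rho^{2n+2}C^\infty(X)$, the coefficients of $\Delta_{g'}-\Delta_g$ all lie in $\rho^{2n+2}C^\infty(X)$. On the other hand, every $\Theta$-vector field, viewed as an ordinary vector field, vanishes along $\bdry X$, so a first- or second-order $\Theta$-differential operator carries $C^\infty(X)$ into $\rho\,C^\infty(X)$; hence $\bm{Z}_IF$ and $\bm{Z}_I\bm{Z}_JF$ lie in $\rho\,C^\infty(X)$ for $F\in C^\infty(X)$. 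Combining, $(\Delta_{g'}-\Delta_g)F\in\rho^{2n+2}C^\infty(X)\cdot\rho\,C^\infty(X)=\rho^{2n+3}C^\infty(X)$, which is the first claim.

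For $\Delta(\log\rho)$ I would first observe that any smooth boundary defining function is of the form $e^\phi\rho$ with $\phi\in C^\infty(X)$, so $\Delta\log(e^\phi\rho)=\Delta\phi+\Delta\log\rho$ and the $\Delta\phi$-term is covered by the previous step; thus we may take $\rho$ to be the model boundary defining function of $\theta$. Here the convenient input is~\eqref{eq:ACHLaplacian} (equivalently Proposition~\ref{prop:ACH_Laplacian} applied to $\log\rho$): $\Delta_g\log\rho=\tfrac{n+1}{2}-\tfrac18\rho\partial_\rho\log\abs{\det k_\rho}$, where $k_\rho$ is the family of metrics on $M$ represented by $(\tensor{g}{_i_j})$ in the spatial part of the $\Theta$-frame. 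Replacing $g$ by $g'$ replaces $k_\rho$ by $k_\rho+\psi|_H$, the spatial block of $\psi$, and therefore
\[
\Delta_{g'}\log\rho-\Delta_g\log\rho
=-\tfrac18\rho\partial_\rho\log\det\!\bigl(\id+k_\rho^{-1}(\psi|_H)\bigr)
=-\tfrac18\rho\partial_\rho\bigl(\tr_g\psi+O(\abs{\psi}^2)\bigr),
\]
using $\tr\bigl(k_\rho^{-1}(\psi|_H)\bigr)=\tensor{g}{^i^j}\tensor{\psi}{_i_j}=\tr_g\psi$. Since $\tr_g\psi\in\rho^{2n+3}C^\infty(X)$ by hypothesis, the remainder $O(\abs{\psi}^2)$ is a smooth function in $\rho^{4n+4}C^\infty(X)\subset\rho^{2n+3}C^\infty(X)$, and $\rho\partial_\rho$ preserves $\rho^{2n+3}C^\infty(X)$, we conclude $\Delta_{g'}\log\rho-\Delta_g\log\rho\in\rho^{2n+3}C^\infty(X)$, which in particular lies in $\rho^{2n+3}\bigl(C^\infty(X)+\log\rho\cdot C^\infty(X)\bigr)$, as asserted.

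The one genuinely delicate point is the last one. A naive estimate would only give that $\Delta(\log\rho)$ changes by $O(\rho^{2n+2})$ — one power of $\rho$ short — because $\bm{Z}_\infty\log\rho=1$ does not decay. The extra power is recovered by combining two facts: the dependence of $\Delta_g$ on $g$ \emph{through} $\log\rho$ enters only via $\log\abs{\det k_\rho}$, so only the \emph{trace} of the perturbation matters; and the normalization of $g$ forces $\tr_g\psi$ to be $O(\rho^{2n+3})$. Making the first of these precise is the only computational step; besides quoting~\eqref{eq:ACHLaplacian} it can be obtained directly from $\Delta_g\log\rho=\tensor{g}{^I^J}\tensor{\Gamma}{^\infty_I_J}$ together with the elementary identities $\tensor{c}{_I_J^\infty}\equiv0$ and $\tensor{c}{_I_\infty^K}=-d_I\tensor{\delta}{_I^K}$ for the frame $\set{\bm{Z}_I}$ (with $d_\infty=0$, $d_0=2$, $d_\alpha=d_{\conj\alpha}=1$), which give $\tensor{\Gamma}{^\infty_I_J}=\tfrac18\bigl(-\bm{Z}_\infty\tensor{g}{_I_J}+(d_I+d_J)\tensor{g}{_I_J}\bigr)$ for normalized $g$ and hence the displayed formula once more.
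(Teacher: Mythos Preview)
Your argument is correct and follows essentially the same line as the paper's proof: both reduce to normalized metrics, use that the ambiguity $\psi=g'-g$ lies in $\rho^{2n+2}C^\infty$ with $\tr_g\psi\in\rho^{2n+3}C^\infty$, and then observe that (i) for smooth $F$ every $\bm{Z}$-derivative contributes an extra factor of $\rho$, while (ii) for $\log\rho$ only the determinant term in~\eqref{eq:ACHLaplacian} is affected and this is governed by the trace. The paper routes step~(i) through the explicit Tanaka--Webster decomposition of $\Delta^{k_\rho}$ in Lemma~\ref{lem:parametrized_laplacian} and~\eqref{eq:diff_LC_TW_contracted}, whereas you argue directly with the Koszul formula in the fixed $\Theta$-frame; both give the same $O(\rho^{2n+3})$ conclusion.
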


\begin{proof}
	By \eqref{eq:diff_LC_TW_contracted},
	$\tensor{(k_\rho^{-1})}{^i^j}\tensor{(k_\rho^{-1})}{^k^l}\tensor{K}{_k_i_j}\tensor{\nabla}{_l}F$ is
	uniquely determined modulo $O(\rho^{2n+3})$.
	Since $\tensor{(k_\rho^{-1})}{^i^j}$ is determined modulo $O(\rho^{2n+2})$ and
	$\nabla_i\nabla_jF$ is $O(\rho)$, $\tensor{(k_\rho^{-1})}{^i^j}\nabla_i\nabla_jF$ is also determined modulo
	$O(\rho^{2n+3})$.
	Thus we conclude by Lemma \ref{lem:parametrized_laplacian} that $\Delta^{k_\rho}F$ is determined up to the
	error of $O(\rho^{2n+3})$.
	Furthermore, the trace condition imposed on $g$ implies that
	$\abs{\det k_\rho}$ is determined modulo $O(\rho^{2n+3})$,
	and hence so is $\rho\partial_\rho(\log\abs{\det k_\rho})$.
	Therefore, by Proposition \ref{prop:ACH_Laplacian}, $\Delta F$ is determined modulo $O(\rho^{2n+3})$.
	The second statement is shown similarly.
\end{proof}

\begin{proof}[Proof of Theorem \ref{thm:dn_operator_of_ACHE}]
	Recall the Dirichlet-type problem in Theorem \ref{thm:dn_operator}.
	Lemma \ref{lem:dependence_of_ACH_laplacian} and the proof of Theorem \ref{thm:dn_operator} show that,
	if $k\le n+1$, then the correspondence of $F|_M=f$ and $G|_M$ depends only on $(M,T^{1,0}M)$ and $\theta$,
	and not on the ambiguity of $g$.
	Therefore, the operators $P^g_{2k}$ for such $k$ are determined invariantly.
	Likewise, the function $B|_M$ in Theorem \ref{thm:log_expansion_definition_of_Q} depends only on
	$(M,T^{1,0}M)$ and $\theta$, and so is $Q^g_\theta$.
\end{proof}

\subsection{First variational formula of $\overline{Q}$}
\label{subsec:variational_formula}
 
We prove Theorem \ref{thm:first_variational_formula} using the characterization of $\overline{Q}$
given in Proposition \ref{prop:volume_log_term_and_Q}.
The key to the proof is introducing the first logarithmic term to our $C^\infty$-smooth approximate ACH-Einstein
metric $g^\mathrm{smooth}$, namely, one satisfying \eqref{eq:approx_Einstein}.
Let $g^\mathrm{smooth}=4\rho^{-2}d\rho^2+k_\rho$ be normalized by $\theta$,
and $\tensor{\mathcal{O}}{_\alpha_\beta}$ the CR obstruction tensor trivialized by $\theta$.
We define the new ACH metric $g$ by correcting $\tensor*{g}{^{\mathrm{smooth}}_\alpha_\beta}$ with the
additional term $4(n+1)^{-1}\cdot\tensor{\mathcal{O}}{_\alpha_\beta}\rho^{2n+2}\log\rho$;
or, in the matrix form with respect to
$\set{\bm{Z}_I}=\set{\bm{Z}_\infty,\bm{Z}_0,\bm{Z}_\alpha,\bm{Z}_{\conj{\alpha}}}$,
\begin{equation*}
	g=\begin{pmatrix}
		4 & 0 & 0 & 0 \\
		0 & 1 & \tensor{(k_\rho)}{_0_\beta} & \tensor{(k_\rho)}{_0_{\conj{\beta}}} \\
		0 & \tensor{(k_\rho)}{_\alpha_0}
		& \tensor{(k_\rho)}{_\alpha_\beta}+\frac{4}{n+1}\tensor{\mathcal{O}}{_\alpha_\beta}\rho^{2n+2}\log\rho
		& \tensor{(k_\rho)}{_\alpha_{\conj{\beta}}} \\
		0 & \tensor{(k_\rho)}{_{\conj{\alpha}}_0} & \tensor{(k_\rho)}{_{\conj{\alpha}}_\beta}
		& \tensor{(k_\rho)}{_{\conj{\alpha}}_{\conj{\beta}}}
		+\frac{4}{n+1}\tensor{\mathcal{O}}{_{\conj{\alpha}}_{\conj{\beta}}}\rho^{2n+2}\log\rho \\
	\end{pmatrix}.
\end{equation*}
Then the vanishing order of the $(\alpha\beta)$-component of $E={\Ric}+\frac{n+2}{2}g$ improves to
$O(\rho^{2n+3}\log\rho)$; see~\cite{Matsumoto_14}*{Equations (8.2) and (8.7)}.

\begin{proof}[Proof of Theorem \ref{thm:first_variational_formula}]
	Let $T_t^{1,0}$ be a smooth 1-parameter family of partially integrable CR structures
	that is tangent to the given infinitesimal deformation
	$\tensor{\psi}{_\alpha_\beta}\in\tensor{\mathcal{E}}{_(_\alpha_\beta_)}(1,1)$.
	Then the construction of $g$ above shows that we can take a family $g^t$ of such ACH metrics,
	each of which is associated to $T_t^{1,0}$ and satisfies
	$E^t=\Ric(g^t)+\frac{n+2}{2}g^t=O(\rho^{2n+3}\log\rho)$,
	so that the coefficients of each components of $g_t$ smoothly depend on the parameter $t$.
	Let $\set{Z_\alpha}$ be a local frame of the original partially integrable CR structure $T_0^{1,0}=T^{1,0}M$.
	If we compute with $\set{\bm{Z}_0,\bm{Z}_\alpha,\bm{Z}_{\conj{\alpha}}}$,
	where $\bm{Z}_\alpha=\rho Z_\alpha$,
	then the derivatives of the components of $k_\rho^t$ at $t=0$, which we write $k_\rho^\bullet$, are
	\begin{alignat*}{2}
		\tensor*{(k_\rho^\bullet)}{_0_0}&=O(\rho),&\qquad
		\tensor*{(k_\rho^\bullet)}{_0_\alpha}&=O(\rho),\\
		\tensor*{(k_\rho^\bullet)}{_\alpha_\conjbeta}&=O(\rho),&\qquad
		\tensor*{(k_\rho^\bullet)}{_\alpha_\beta}&=-2\tensor{\psi}{_\alpha_\beta}+O(\rho).
	\end{alignat*}
	Now we start with the fact that there is a uniform estimate on the scalar curvature of $g^t$:
	\begin{equation*}
		\Scal^t=-(n+1)(n+2)+O(\rho^{2n+3}).
	\end{equation*}
	From this we can see that
	\begin{equation*}
		\int_{\varepsilon\le\rho\le\varepsilon_0}\Scal^\bullet dV_g=O(1)\qquad\text{as $\varepsilon\to 0$}.
	\end{equation*}
	On the other hand, the well-known formula of the first variation of the scalar curvature implies
	\begin{equation}
		\label{eq:variation_of_scalar_curvature}
		\begin{split}
			\Scal^\bullet
			&=\tensor{(g^\bullet)}{_I_J_,^I^J}-\tensor{(g^\bullet)}{_I^I_,_J^J}
			-\tensor{\Ric}{^I^J}\tensor*{g}{^\bullet_I_J}\\
			&=\tensor{(g^\bullet)}{_I_J_,^I^J}-\tensor{(g^\bullet)}{_I^I_,_J^J}
			+\frac{1}{2}(n+2)\tensor{g}{^I^J}\tensor*{g}{^\bullet_I_J}+O(\rho^{2n+4}\log\rho).
		\end{split}
	\end{equation}
	Since $dV_g^\bullet=\frac{1}{2}\tensor{g}{^I^J}\tensor*{g}{^\bullet_I_J}dV_g$,
	\eqref{eq:variation_of_scalar_curvature} integrates to
	\begin{equation}
		\label{eq:integral_of_variation_of_scalar_curvature}
		\int_{\varepsilon\le\rho\le\varepsilon_0}
		(\tensor{(g^\bullet)}{_I_J_,^I^J}-\tensor{(g^\bullet)}{_I^I_,_J^J})dV_g
		+(n+2)\int_{\varepsilon\le\rho\le\varepsilon_0}dV^\bullet_g=O(1).
	\end{equation}
	The unit outward normal vector for $g$ along $\set{\rho=\varepsilon}$ is
	$\nu=-\frac{1}{2}\varepsilon\partial_\rho$.
	Therefore, \eqref{eq:integral_of_variation_of_scalar_curvature} implies
	\begin{equation}
		\label{eq:VariationOfVolume}
		\begin{split}
			(n+2)\Vol(\set{\varepsilon\le\rho\le\varepsilon_0})^\bullet
			&=-\int_{\rho=\varepsilon}
			(\tensor{(g^\bullet)}{_I_J_,^I}-\tensor{(g^\bullet)}{_I^I_,_J})\tensor{\nu}{^J}d\sigma+O(1)\\
			&=
			\int_M(\tensor{(g^\bullet)}{_I_J_,^I}-\tensor{(g^\bullet)}{_I^I_,_J})\cdot
			\frac{1}{2}\varepsilon\tensor{\delta}{_\infty^J}\cdot\varepsilon^{-2n-2}dV_{k_\varepsilon}+O(1)\\
			&=\frac{1}{2}\varepsilon^{-2n-1}\int_M(\tensor{(g^\bullet)}{_I_\infty_,^I}
			-\tensor{(g^\bullet)}{_I^I_,_\infty})
			dV_{k_\varepsilon}+O(1).
		\end{split}
	\end{equation}
	We compare the $\log(1/\varepsilon)$-terms of the both sides of \eqref{eq:VariationOfVolume}.
	That of the left-hand side is obviously $(n+2)L^\bullet$.
	As for the right-hand side, we use
	\begin{equation}
		\label{eq:volume_variation_integrand}
			\tensor{(g^\bullet)}{_I_\infty_,^I}-\tensor{(g^\bullet)}{_I^I_,_\infty}
			=-\frac{1}{2}\tensor{(k_\rho^{-1})}{^i^j}\tensor{(k_\rho^{-1})}{^k^l}
			\tensor{{(k'_\rho)}}{_j_l}\tensor{{(k_\rho^\bullet)}}{_i_k}
			-(\tensor{(k_\rho^{-1})}{^i^j}\tensor{{(k_\rho^\bullet)}}{_i_j})',
	\end{equation}
	where the primes denotes differentiations in $\rho$.
	Since $(\det k_\rho)^\bullet=(\det k_\rho)\tensor{(k_\rho^{-1})}{^i^j}\tensor{{(k_\rho^\bullet)}}{_i_j}$,
	we conclude that
	$\tensor{(k_\rho^{-1})}{^i^j}\tensor{{(k_\rho^\bullet)}}{_i_j}$ contains no
	$(\rho^{2n+2}\log\rho)$-term, which implies that the $\log(1/\varepsilon)$-term of the variation of the volume
	expansion may come only from the first term of the right-hand side of \eqref{eq:volume_variation_integrand}.
	The logarithmic term that appears in the expansion of $\tensor{{(k'_\rho)}}{_j_l}$ is
	$8\mathcal{O}\cdot\rho^{2n+1}\log\rho$, and hence, modulo logarithm-free terms,
	\begin{equation*}
		\begin{split}
			&-\frac{1}{2}\tensor{(k_\rho^{-1})}{^i^j}\tensor{(k_\rho^{-1})}{^k^l}
			\tensor{{(k'_\rho)}}{_j_l}\tensor{{(k_\rho^\bullet)}}{_i_k}\\
			&= -4\rho^{2n+1}\log\rho\cdot
			\tensor{(k_\rho^{-1})}{^{\conj{\alpha}}^\beta}\tensor{(k_\rho^{-1})}{^{\conj{\gamma}}^\sigma}
			\tensor{\mathcal{O}}{_\beta_\sigma}\tensor{{(k_\rho^\bullet)}}{_{\conj{\alpha}}_{\conj{\gamma}}}
			+(\text{the complex conjugate})+O(\rho^{2n+3}\log\rho)\\
			&= 8\rho^{2n+1}\log\rho\cdot
			(\tensor{\mathcal{O}}{^\alpha^\beta}\tensor{\psi}{_\alpha_\beta}
			+\tensor{\mathcal{O}}{^{\conj{\alpha}}^{\conj{\beta}}}
			\tensor{\psi}{_{\conj{\alpha}}_{\conj{\beta}}})+O(\rho^{2n+3}\log\rho).
		\end{split}
	\end{equation*}
	Therefore the $\log(1/\varepsilon)$-coefficient of the right-hand side of \eqref{eq:VariationOfVolume} is
	$-4$ times the integral of
	$(\tensor{\mathcal{O}}{^\alpha^\beta}\tensor{\psi}{_\alpha_\beta}
	+\tensor{\mathcal{O}}{^{\conj\alpha}^{\conj\beta}}\tensor{\psi}{_{\conj\alpha}_{\conj\beta}})dV_{\theta^2+h}$.
	Thus we conclude that
	\begin{equation*}
		L^\bullet=-\frac{4}{n+2}\int_M
		(\tensor{\mathcal{O}}{^\alpha^\beta}\tensor{\psi}{_\alpha_\beta}
		+\tensor{\mathcal{O}}{^{\conj\alpha}^{\conj\beta}}\tensor{\psi}{_{\conj\alpha}_{\conj\beta}})
		dV_{\theta^2+h}.
	\end{equation*}
	By combining this with Proposition \ref{prop:volume_log_term_and_Q}, we obtain \eqref{eq:Variation_TotalQ}.
\end{proof}

\section{Linearized obstruction operator}

\subsection{Asymptotic K\"ahlerity}

Although ACH metrics are not K\"ahler in general,
there is an asymptotic K\"ahler-like phenomenon about them, which introduces some insight into our
computation in the next subsection.

Let $g=4\rho^{-2}d\rho^2+k_\rho$ be a $C^\infty$-smooth ACH metric normalized with respect to $\theta$.
Let $\set{Z_\alpha}$ be a local frame of $T^{1,0}M$, and take the associated
local frame $\set{\bm{Z}_I}=\set{\bm{Z}_\infty,\bm{Z}_0,\bm{Z}_\alpha,\bm{Z}_{\conj{\alpha}}}$ of $\thetatangent$.
We define the complex subbundle $(\thetatangent|_M)^{1,0}$ of $\mathbb{C}\thetatangent|_M$
to be the one spanned by $\bm{Z}_\alpha|_M$ and $\bm{Z}_\tau|_M$, where
\begin{equation*}
	\bm{Z}_\tau:=\frac{1}{2}\bm{Z}_\infty+i\bm{Z}_0\qquad\text{and}\qquad
	\bm{Z}_{\conj{\tau}}:=\conj{\bm{Z}_\tau}=\frac{1}{2}\bm{Z}_\infty-i\bm{Z}_0.
\end{equation*}
Then $(\thetatangent|_M)^{1,0}$ is actually independent of $\theta$ with which we normalize $g$.
The complex structure endomorphism of $\thetatangent|_M$ with $i$-eigenbundle $(\thetatangent|_M)^{1,0}$
is denoted by $J|_M$.

In the sequel, $\set{\bm{Z}_P}=\set{\bm{Z}_\tau,\bm{Z}_\alpha,\bm{Z}_{\conj{\tau}},\bm{Z}_{\conj{\alpha}}}$ is
mainly used instead of $\set{\bm{Z}_I}=\set{\bm{Z}_\infty,\bm{Z}_0,\bm{Z}_\alpha,\bm{Z}_{\conj{\alpha}}}$.
The next lemma shows the efficiency of this approach.
From now on, the indices $P$, $Q$, $R$, $\dots$ run $\set{\tau, 1, \dots, n, \conj{\tau},\conj{1},\dots,\conj{n}}$,
while $A$, $B$, $C$, $\dots$ run $\set{\tau, 1, \dots, n}$.
The barred indices $\conj{A}$, $\conj{B}$, $\conj{C}$, $\dots$ of course run
$\set{\conj{\tau},\conj{1},\dots,\conj{n}}$.
We also make an agreement that $\tensor{h}{_\alpha_{\conj{\beta}}}$ denotes $h(Z_\alpha,Z_{\conj{\beta}})$;
it is not $h(\bm{Z}_\alpha,\bm{Z}_{\conj{\beta}})$ (which does not even make sense).
Therefore,
\begin{equation*}
	\tensor{g}{_\tau_{\conj{\tau}}}=2,\qquad\tensor{g}{_\tau_{\conj{\alpha}}}=0,\qquad
	\tensor{g}{_\alpha_{\conj{\beta}}}=\tensor{h}{_\alpha_{\conj{\beta}}}\qquad\text{on $M$},
\end{equation*}
and $\tensor{g}{_A_B}$ vanishes on the boundary.

\begin{lem}
	\label{prop:asymptotic_kahlerity}
	Under the situation above,
	let $J\in C^\infty(M,\End(\thetatangent))$ be any $(1,1)$-$\Theta$-tensor on $X$ that extends $J|_M$.
	Then the $(2,1)$-$\Theta$-tensor $\nabla J$ vanishes on $M$.
\end{lem}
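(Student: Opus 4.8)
The plan is to compute the Levi--Civita $\Theta$-connection $\nabla$ of $g$ along $M$ in a local $\Theta$-frame adapted to $J|_M$ and to show that its ``mixed'' Christoffel symbols vanish on $M$. I would work with $\set{\bm{Z}_P}=\set{\bm{Z}_\tau,\bm{Z}_\alpha,\bm{Z}_{\conj\tau},\bm{Z}_{\conj\alpha}}$, so that $(\thetatangent|_M)^{1,0}$ is spanned by the unbarred vectors. First I would note that $\nabla J|_M$ is independent of the chosen extension $J$: two extensions differ by $\rho S$ with $S$ a smooth endomorphism-valued $\Theta$-tensor, and every $V\in\mathcal{V}_\Theta$ vanishes on $M$ as an ordinary vector field, so $V\rho|_M=0$ and hence $\nabla_V(\rho S)|_M=0$. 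I may therefore take the extension with constant components $J\bm{Z}_A=i\bm{Z}_A$, $J\bm{Z}_{\conj A}=-i\bm{Z}_{\conj A}$. Writing $\nabla_{\bm{Z}_P}\bm{Z}_Q=\tensor{\Gamma}{_P_Q^R}\bm{Z}_R$ and $\epsilon_R=\pm i$ according to the type of $R$, one has $(\nabla_{\bm{Z}_P}J)\bm{Z}_Q=\sum_R(\epsilon_Q-\epsilon_R)\tensor{\Gamma}{_P_Q^R}\bm{Z}_R$, so it suffices to prove $\tensor{\Gamma}{_P_A^{\conj B}}|_M=0$ for all $P$ and all unbarred $A$, $B$; the vanishing of $\tensor{\Gamma}{_P_{\conj A}^B}|_M$ then follows by complex conjugation.

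Next I would evaluate these Christoffel symbols through the Koszul formula. On $M$ the only nonvanishing components of $g$ in this frame are $\tensor{g}{_\tau_{\conj\tau}}=2$ and $\tensor{g}{_\alpha_{\conj\beta}}=\tensor{h}{_\alpha_{\conj\beta}}$, together with their conjugates; thus $g|_M$ is block off-diagonal with respect to the splitting of indices into unbarred and barred, and so is $g^{-1}|_M$. Consequently $\tensor{\Gamma}{_P_A^{\conj B}}|_M=\tensor{g}{^{\conj B}^C}|_M\,g(\nabla_{\bm{Z}_P}\bm{Z}_A,\bm{Z}_C)|_M$, and I am reduced to showing $g(\nabla_{\bm{Z}_P}\bm{Z}_A,\bm{Z}_C)|_M=0$ for all $P$ and all unbarred $A$, $C$. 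In the Koszul expansion of $2g(\nabla_{\bm{Z}_P}\bm{Z}_Q,\bm{Z}_R)$ the three terms of the form $V(g_{QR})$ vanish on $M$ because $V\in\mathcal{V}_\Theta$ annihilates boundary values of smooth functions, so only the bracket terms survive there:
\begin{equation*}
	2\,g(\nabla_{\bm{Z}_P}\bm{Z}_Q,\bm{Z}_R)\big|_M
	=g([\bm{Z}_P,\bm{Z}_Q],\bm{Z}_R)\big|_M-g([\bm{Z}_Q,\bm{Z}_R],\bm{Z}_P)\big|_M+g([\bm{Z}_R,\bm{Z}_P],\bm{Z}_Q)\big|_M.
\end{equation*}
The boundary values of the $\Theta$-frame brackets are elementary: using $\bm{Z}_\tau=\tfrac12\rho\partial_\rho+i\rho^2T$, the partial integrability of $T^{1,0}M$ (which kills the $T$-component of $[Z_\alpha,Z_\beta]$), and the definition of the Levi form, one gets $[\bm{Z}_\alpha,\bm{Z}_\beta]|_M=0$, $[\bm{Z}_\tau,\bm{Z}_\alpha]|_M=[\bm{Z}_{\conj\tau},\bm{Z}_\alpha]|_M=\tfrac12\bm{Z}_\alpha$, $[\bm{Z}_\tau,\bm{Z}_{\conj\tau}]|_M=-2i\bm{Z}_0$, and $[\bm{Z}_\alpha,\bm{Z}_{\conj\beta}]|_M=-i\tensor{h}{_\alpha_{\conj\beta}}\bm{Z}_0$, where $\bm{Z}_0=\tfrac{1}{2i}(\bm{Z}_\tau-\bm{Z}_{\conj\tau})$.

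It then remains to substitute these values into the displayed identity for the finitely many relevant choices of $P$, $A$, $C$. In each case the surviving contributions organize into a cancellation of the schematic shape $-\tensor{h}{_\alpha_{\conj\beta}}+\tfrac12\tensor{h}{_\alpha_{\conj\beta}}+\tfrac12\tensor{h}{_\alpha_{\conj\beta}}=0$: the term $-\tensor{h}{_\alpha_{\conj\beta}}$ comes from pairing the $\bm{Z}_0$-part of $[\bm{Z}_\alpha,\bm{Z}_{\conj\beta}]$ against $\bm{Z}_\tau$ through $\tensor{g}{_\tau_{\conj\tau}}=2$, and the two halves from pairing the $\tfrac12\bm{Z}$-parts of the $[\bm{Z}_\tau,\bm{Z}_\bullet]$-type brackets through the Levi form. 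I expect this finite case analysis, and in particular keeping the normalization constants straight, to be the only real work; it is precisely the values $\tensor{g}{_\infty_\infty}=4$ and $\tensor{g}{_0_0}=1$ in the definition of an ACH metric that make the three pieces cancel. One can also read the argument conceptually: the displayed identity shows that $\nabla J(p)$ is a universal expression in $\tensor{h}{_\alpha_{\conj\beta}}(p)$ alone, and this expression must vanish because it does so for the (pseudo-)K\"ahler complex hyperbolic metric of Example \ref{ex:complex_hyperbolic}, whose boundary data and frame brackets are exactly the ones above.
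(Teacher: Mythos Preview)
Your proposal is correct and follows essentially the same route as the paper: reduce, via the Koszul formula and the fact that all $\bm{Z}_PF$ vanish on $M$, to showing that the lowered Christoffel symbols $\tensor{\Gamma}{_C_P_A}$ with unbarred $A$, $C$ vanish on $M$, and verify this from the explicit boundary values of the brackets $[\bm{Z}_P,\bm{Z}_Q]$. The paper carries out exactly this computation (and records the remaining Christoffel symbols for later use); your added remarks on independence of the extension and the ``universal in $\tensor{h}{_\alpha_{\conj\beta}}$, hence compare with the model'' shortcut are correct but not needed.
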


\begin{proof}
	Let $\tensor{\omega}{_Q^R}=\tensor{\Gamma}{^R_P_Q}\bm{\theta}^P$ be the connection 1-$\Theta$-forms of
	$\nabla$ with respect to the local frame $\set{\bm{Z}_P}$,
	where $\set{\bm{\theta}^P}$ denotes the dual coframe.
	Then $\tensor{\Gamma}{_R_P_Q}:=\tensor{g}{_R_S}\tensor{\Gamma}{^S_P_Q}$ are given by
	\begin{equation}
		\label{eq:Levi_Civita}
		\tensor{\Gamma}{_R_P_Q}
		=\frac{1}{2}(\bm{Z}_P\tensor{g}{_Q_R}+\bm{Z}_Q\tensor{g}{_P_R}-\bm{Z}_R\tensor{g}{_P_Q}
		+[\bm{Z}_P,\bm{Z}_Q]_R-[\bm{Z}_P,\bm{Z}_R]_Q-[\bm{Z}_Q,\bm{Z}_R]_P),
	\end{equation}
	where $[\bm{Z}_P,\bm{Z}_Q]^R=\bm{\theta}^R([\bm{Z}_P,\bm{Z}_Q])$ and
	$[\bm{Z}_P,\bm{Z}_Q]_R=\tensor{g}{_R_S}[\bm{Z}_P,\bm{Z}_Q]^S$.
	Since $\bm{Z}_PF=O(\rho)$ for any function $F\in C^\infty(X)$, we have
	\begin{equation*}
		\tensor{\Gamma}{_R_P_Q}
		=\frac{1}{2}([\bm{Z}_P,\bm{Z}_Q]_R-[\bm{Z}_P,\bm{Z}_R]_Q-[\bm{Z}_Q,\bm{Z}_R]_P)
		\qquad\text{on $M$}.
	\end{equation*}
	By explicit computation we obtain
	\begin{alignat*}{3}
		[\bm{Z}_\tau,\bm{Z}_\tau]&=0,&\qquad
		[\bm{Z}_\tau,\bm{Z}_\alpha]&=\frac{1}{2}\bm{Z}_\alpha,&\qquad
		[\bm{Z}_\alpha,\bm{Z}_\beta]&=0,\\
		[\bm{Z}_\tau,\bm{Z}_{\conj{\tau}}]&=-(\bm{Z}_\tau-\bm{Z}_{\conj{\tau}}),&\qquad
		[\bm{Z}_\tau,\bm{Z}_{\conj{\alpha}}]&=\frac{1}{2}\bm{Z}_{\conj{\alpha}},&\qquad
		[\bm{Z}_\alpha,\bm{Z}_{\conj{\beta}}]
		&=-\frac{1}{2}\tensor{h}{_\alpha_{\conj{\beta}}}(\bm{Z}_\tau-\bm{Z}_{\conj{\tau}})
		\qquad\text{on $M$},
	\end{alignat*}
	and we can conclude from this that the Christoffel symbols of the type $\tensor{\Gamma}{_C_P_B}$
	all vanish on $M$.
	Hence $\tensor{\omega}{_B^{\conj{C}}}=0$ on $M$, which is equivalent to $\nabla J$ being zero on $M$.
\end{proof}

We record here the boundary values of the remaining Christoffel symbols: on $M$,
\begin{subequations}
	\label{eq:Christoffel}
\begin{alignat}{4}
	\tensor{\Gamma}{^\tau_\tau_\tau}&=-1,&\qquad
	\tensor{\Gamma}{^\tau_{\conj{\tau}}_\tau}&=1,&\qquad
	\tensor{\Gamma}{^\tau_\alpha_\tau}&=0,&\qquad
	\tensor{\Gamma}{^\tau_{\conj{\alpha}}_\tau}&=0,\\
	\tensor{\Gamma}{^\gamma_\tau_\tau}&=0,&\qquad
	\tensor{\Gamma}{^\gamma_{\conj{\tau}}_\tau}&=0,&\qquad
	\tensor{\Gamma}{^\gamma_\alpha_\tau}&=-\tensor{\delta}{_\alpha^\gamma},&\qquad
	\tensor{\Gamma}{^\gamma_{\conj{\alpha}}_\tau}&=0,\\
	\tensor{\Gamma}{^\tau_\tau_\beta}&=0,&\qquad
	\tensor{\Gamma}{^\tau_{\conj{\tau}}_\beta}&=0,&\qquad
	\tensor{\Gamma}{^\tau_\alpha_\beta}&=0,&\qquad
	\tensor{\Gamma}{^\tau_{\conj{\alpha}}_\beta}&=\tfrac{1}{2}\tensor{h}{_\beta_{\conj{\alpha}}},\\
	\tensor{\Gamma}{^\gamma_\tau_\beta}&=-\tfrac{1}{2}\tensor{\delta}{_\beta^\gamma},&\qquad
	\tensor{\Gamma}{^\gamma_{\conj{\tau}}_\beta}&=\tfrac{1}{2}\tensor{\delta}{_\beta^\gamma},&\qquad
	\tensor{\Gamma}{^\gamma_\alpha_\beta}&=0,&\qquad
	\tensor{\Gamma}{^\gamma_{\conj{\alpha}}_\beta}&=0.
\end{alignat}
\end{subequations}
We give an observation on the curvature tensor of $g$, which can be seen as a good reason for
the name ``asymptotically complex hyperbolic metric.''

\begin{prop}
	\label{prop:asymptotic_complex_hyperbolicity}
	Let $R$ be the Riemann curvature tensor, regarded as a $\Theta$-tensor,
	of a normalized $C^\infty$-smooth ACH metric $g$.
	Among its components with respect to $\set{\bm{Z}_P}$,
	the only ones that are non-vanishing on $M$ are those of K\"ahler-curvature
	type, i.e., $\tensor{R}{_A_{\conj{B}}_C_{\conj{D}}}$, $\tensor{R}{_{\conj{B}}_A_C_{\conj{D}}}$,
	$\tensor{R}{_A_{\conj{B}}_{\conj{D}}_C}$, and $\tensor{R}{_{\conj{B}}_A_{\conj{D}}_C}$.
	Moreover, the boundary value of $\tensor{R}{_A_{\conj{B}}_C_{\conj{D}}}$ is given as follows:
	\begin{equation}
		\label{eq:curvature_of_ACH}
		\left.(\tensor{R}{_A_{\conj{B}}_C_{\conj{D}}})\right|_M
		=-\frac{1}{2}\left.(\tensor{g}{_A_{\conj{B}}}\tensor{g}{_C_{\conj{D}}}
		+\tensor{g}{_A_{\conj{D}}}\tensor{g}{_C_{\conj{B}}})\right|_M.
	\end{equation}
\end{prop}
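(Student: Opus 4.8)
The plan is to reduce the boundary curvature to a purely algebraic quantity. In the $\Theta$-frame $\set{\bm{Z}_P}$, the Riemann tensor $\tensor{R}{^S_R_P_Q}=\bm{\theta}^S(R(\bm{Z}_P,\bm{Z}_Q)\bm{Z}_R)$ is the usual combination of products $\tensor{\Gamma}{^S_P_T}\tensor{\Gamma}{^T_Q_R}$, frame derivatives $\bm{Z}_P\tensor{\Gamma}{^S_Q_R}$, and structure constants $[\bm{Z}_P,\bm{Z}_Q]^T$, the Christoffel symbols being given by \eqref{eq:Levi_Civita}. Since $\bm{Z}_PF=O(\rho)$ for every $F\in C^\infty(X)$ and the $\tensor{\Gamma}{^S_P_Q}$ are smooth up to $M$, all the frame-derivative terms vanish on the boundary; thus $\tensor{R}{^S_R_P_Q}|_M$ is a universal quadratic expression in the numbers recorded in \eqref{eq:Christoffel}, supplemented by $\tensor{\Gamma}{^{\conj{C}}_P_B}|_M=\tensor{\Gamma}{^C_P_{\conj{B}}}|_M=0$ (the content of Lemma \ref{prop:asymptotic_kahlerity}) and by the boundary brackets displayed in the proof of that lemma.

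First I would show that only the K\"ahler-type components survive on $M$. By Lemma \ref{prop:asymptotic_kahlerity}, any $\Theta$-tensor extension $J$ of $J|_M$ has $\nabla J=O(\rho)$; writing $\nabla J=\rho\cdot(\text{smooth})$ and using $\bm{Z}_P\rho=O(\rho)$ with the Leibniz rule gives $\nabla_P\nabla_Q J|_M=0$, and $\nabla_{[\bm{Z}_P,\bm{Z}_Q]}J|_M=0$ since $[\bm{Z}_P,\bm{Z}_Q]\in\mathcal{V}_\Theta$. The Ricci identity then yields $[R(\bm{Z}_P,\bm{Z}_Q),J|_M]=0$, so $R(\bm{Z}_P,\bm{Z}_Q)|_M$ preserves the splitting $\mathbb{C}\thetatangent|_M=(\thetatangent|_M)^{1,0}\oplus(\thetatangent|_M)^{0,1}$; equivalently $\tensor{R}{^{\conj{C}}_B_P_Q}|_M=\tensor{R}{^C_{\conj{B}}_P_Q}|_M=0$. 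Because $g|_M$ is block-diagonal ($\tensor{g}{_A_B}=\tensor{g}{_{\conj{A}}_{\conj{B}}}=0$ on $M$), lowering the first index kills $\tensor{R}{_A_B_P_Q}|_M$ and $\tensor{R}{_{\conj{A}}_{\conj{B}}_P_Q}|_M$, and the symmetries $\tensor{R}{_S_R_P_Q}=-\tensor{R}{_R_S_P_Q}=\tensor{R}{_P_Q_S_R}$ then force both index pairs to be mixed --- precisely the assertion that the only nonzero boundary components are $\tensor{R}{_A_{\conj{B}}_C_{\conj{D}}}$ and its reorderings.

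Next I would compute $\tensor{R}{_A_{\conj{B}}_C_{\conj{D}}}|_M$ by substituting \eqref{eq:Christoffel} and the boundary brackets into the algebraic curvature formula, computing $\tensor{R}{^{\conj{E}}_{\conj{B}}_C_{\conj{D}}}|_M$ and lowering with $g_{A\conj{E}}|_M$. The nonzero contributions involve $\tensor{\Gamma}{^\tau_\tau_\tau}=-1$, $\tensor{\Gamma}{^\tau_{\conj{\tau}}_\tau}=1$, $\tensor{\Gamma}{^\gamma_\alpha_\tau}=-\tensor{\delta}{_\alpha^\gamma}$, $\tensor{\Gamma}{^\tau_{\conj{\alpha}}_\beta}=\tfrac12\tensor{h}{_\beta_{\conj{\alpha}}}$, $\tensor{\Gamma}{^\gamma_\tau_\beta}=-\tfrac12\tensor{\delta}{_\beta^\gamma}$, $\tensor{\Gamma}{^\gamma_{\conj{\tau}}_\beta}=\tfrac12\tensor{\delta}{_\beta^\gamma}$ together with the brackets $[\bm{Z}_\tau,\bm{Z}_{\conj{\tau}}]=-(\bm{Z}_\tau-\bm{Z}_{\conj{\tau}})$, $[\bm{Z}_\tau,\bm{Z}_{\conj{\alpha}}]=\tfrac12\bm{Z}_{\conj{\alpha}}$, $[\bm{Z}_\alpha,\bm{Z}_{\conj{\beta}}]=-\tfrac12\tensor{h}{_\alpha_{\conj{\beta}}}(\bm{Z}_\tau-\bm{Z}_{\conj{\tau}})$ on $M$. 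Tracking $\tensor{g}{_\tau_{\conj{\tau}}}=2$ against $\tensor{g}{_\alpha_{\conj{\beta}}}=\tensor{h}{_\alpha_{\conj{\beta}}}$, these should assemble into $-\tfrac12(\tensor{g}{_A_{\conj{B}}}\tensor{g}{_C_{\conj{D}}}+\tensor{g}{_A_{\conj{D}}}\tensor{g}{_C_{\conj{B}}})|_M$, which is \eqref{eq:curvature_of_ACH}. As a cross-check (and a shortcut), one may instead observe that this boundary curvature depends only on the universal data above and hence agrees with that of the complex hyperbolic metric of Example \ref{ex:complex_hyperbolic}, for which \eqref{eq:curvature_of_ACH} is the standard constant-holomorphic-sectional-curvature formula.

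The conceptual part is clean once the Ricci-identity observation is noted; the main obstacle is the bookkeeping in the last step --- checking that the $\bm{Z}_\tau$-slots and the $\bm{Z}_\alpha$-slots combine \emph{uniformly} into the single $g$-bilinear expression despite $\tensor{g}{_\tau_{\conj{\tau}}}=2\neq\tensor{g}{_\alpha_{\conj{\beta}}}$, and making sure the neglected frame-derivative terms $\bm{Z}_P\tensor{\Gamma}{^S_Q_R}$ are genuinely $O(\rho)$, which rests on $\bm{Z}_P\rho=O(\rho)$ and the $C^\infty$ boundary regularity of $g$.
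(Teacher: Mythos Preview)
Your proposal is correct and follows essentially the same route as the paper: the paper also invokes Lemma~\ref{prop:asymptotic_kahlerity} to conclude that the curvature endomorphism commutes with $J|_M$ (you have simply made the Ricci-identity step explicit), and then appeals to a direct computation with \eqref{eq:Christoffel} for the formula~\eqref{eq:curvature_of_ACH}. Your additional observation---that the boundary curvature depends only on the universal data \eqref{eq:Christoffel} and the boundary brackets, and therefore coincides with that of the complex hyperbolic model of Example~\ref{ex:complex_hyperbolic}---is a legitimate shortcut that the paper does not spell out, but it is in the same spirit and yields the same result.
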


\begin{proof}
	Lemma \ref{prop:asymptotic_kahlerity} implies that the curvature $R$,
	seen as an $\End(\thetatangent)$-valued 2-$\Theta$-form, respects $J$ on $M$.
	Hence $\tensor{R}{_P_Q_C_D}$ and $\tensor{R}{_P_Q_{\conj{C}}_{\conj{D}}}$ vanish on $M$,
	and by the curvature symmetry, only the K\"ahler-curvature type components survive on $M$.
	The proof of \eqref{eq:curvature_of_ACH} is given by a direct computation using \eqref{eq:Christoffel}.
\end{proof}

\subsection{Dirichlet-to-Neumann-type characterization of $\mathcal{O}^\bullet$}

Let $\sigma$ be an arbitrary symmetric 2-$\Theta$-tensor on $X$ equipped with a $C^\infty$-smooth ACH metric $g$.
Taking the normalization of $g$ with respect to a contact form $\theta$, we obtain a 2-tensor
in $\tensor{\mathcal{E}}{_(_\alpha_\beta_)}$ whose components are $\sigma(\bm{Z}_\alpha,\bm{Z}_\beta)|_M$.
If we consider another contact form $\Hat{\theta}=e^{\Upsilon}\theta$ and
the local frame
$\set{\Hat{\bm{Z}}_P}
=\set{\Hat{\bm{Z}}_\tau,\Hat{\bm{Z}}_\alpha,\Hat{\bm{Z}}_{\conj{\tau}},\Hat{\bm{Z}}_{\conj{\alpha}}}$
associated to the normalization with respect to $\Hat{\theta}$,
then since the model boundary defining functions are related as $\log(\Hat{\rho}/\rho)|_M=\Upsilon/2$,
it holds that
\begin{equation*}
	\sigma(\Hat{\bm{Z}}_\alpha,\Hat{\bm{Z}}_\beta)|_M=e^\Upsilon\cdot\sigma(\bm{Z}_\alpha,\bm{Z}_\beta)|_M.
\end{equation*}
Therefore, $\sigma(\bm{Z}_\alpha,\bm{Z}_\beta)|_M$ is more naturally interpreted as defining a weighted tensor
in $\tensor{\mathcal{E}}{_(_\alpha_\beta_)}(1,1)$, which we write $\tensor{(\sigma|_M)}{_\alpha_\beta}$.
Moreover, if $\sigma=O(\rho^{2k})$, then since
\begin{equation*}
	\Hat{\rho}^{-2k}\sigma(\Hat{\bm{Z}}_\alpha,\Hat{\bm{Z}}_\beta)|_M
	=e^{(1-k)\Upsilon}\cdot\rho^{-2k}\sigma(\bm{Z}_\alpha,\bm{Z}_\beta)|_M,
\end{equation*}
$\rho^{-2k}\sigma(\bm{Z}_\alpha,\bm{Z}_\beta)|_M$ is considered to define a weighted tensor that belongs to
$\tensor{\mathcal{E}}{_(_\alpha_\beta_)}(1-k,1-k)$, which we write $\tensor{(\rho^{-2k}\sigma|_M)}{_\alpha_\beta}$.

\begin{prop}
	\label{prop:lichnerowicz_equation}
	Let $g$ be a $C^\infty$-smooth ACH metric on a $(2n+2)$-dimensional $\Theta$-manifold $X$.
	Then there exists, for any
	$\tensor{\psi}{_\alpha_\beta}\in\tensor{\mathcal{E}}{_(_\alpha_\beta_)}(1,1)$,
	a real $C^\infty$-smooth symmetric 2-$\Theta$-tensor $\sigma$
	such that $\tensor{(\sigma|_M)}{_\alpha_\beta}=\tensor{\psi}{_\alpha_\beta}$ and
	\begin{equation}
		\label{eq:linearized_Einstein_equation}
		(\Delta_\mathrm{L}+n+2)\sigma=O(\rho^{2n+2}).
	\end{equation}
	The $\Theta$-tensor $\sigma$ is uniquely determined modulo $O(\rho^{2n+2})$ by these conditions,
	and it is automatically approximately trace-free and divergence-free:
	\begin{equation}
		\label{eq:transverse_tracelessness}
		\tr\sigma=O(\rho^{2n+2}),\qquad
		\delta\sigma=O(\rho^{2n+2}).
	\end{equation}
	Furthermore, suppose that $g$ satisfies the approximate Einstein condition \eqref{eq:approx_Einstein}.
	In this case, for any $\sigma$ satisfying \eqref{eq:linearized_Einstein_equation},
	if we write $(\Delta_\mathrm{L}+n+2)\sigma=\tilde{\sigma}$, then
	$\tensor{(\rho^{-2n-2}\tilde{\sigma}|_M)}{_\alpha_\beta}=-\tensor*{\mathcal{O}}{^\bullet_\alpha_\beta}$,
	where
	$\tensor*{\mathcal{O}}{^\bullet_\alpha_\beta}=\tensor{(\mathcal{O}^\bullet\psi)}{_\alpha_\beta}$
	is the variation of the obstruction tensor.
\end{prop}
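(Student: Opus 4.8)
The plan is to carry out the scattering-type construction of Theorems~\ref{thm:dn_operator} and~\ref{thm:log_expansion_definition_of_Q} once more, this time with the Lichnerowicz operator $\Delta_\mathrm{L}+n+2$ in place of the scalar Laplacian and with symmetric $2$-$\Theta$-tensors written in the adapted frame $\set{\bm{Z}_P}$. The first task is a tensorial analogue of Proposition~\ref{prop:ACH_Laplacian}: combining Lemma~\ref{prop:asymptotic_kahlerity}, the boundary Christoffel symbols~\eqref{eq:Christoffel}, and the curvature asymptotics~\eqref{eq:curvature_of_ACH} of Proposition~\ref{prop:asymptotic_complex_hyperbolicity}, one writes
\begin{equation*}
	\Delta_\mathrm{L}+n+2=-\tfrac{1}{4}(\rho\partial_\rho)^2+\tfrac{n+1}{2}\rho\partial_\rho+\mathcal{I}+\rho\,\Psi,
\end{equation*}
where $\Psi$ is a $\Theta$-differential operator on $\Sym^2\thetacotangent$ and $\mathcal{I}$, the \emph{indicial endomorphism}, is a bundle endomorphism of $\Sym^2\thetacotangent|_M$. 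The structural point is that $\mathcal{I}$ is block diagonal for the decomposition of $\Sym^2(\mathbb{C}\thetacotangent)|_M$ into its $\sigma_{AB}$-, $\sigma_{A\conj{B}}$-, and $\sigma_{\conj{A}\conj{B}}$-parts (the $(2,0)$, $(1,1)$ and $(0,2)$ parts relative to $J|_M$), and within these into the $\tau$-slots and the $\alpha$-slots: by~\eqref{eq:Christoffel} the connection contributions act diagonally by explicit constants, and by~\eqref{eq:curvature_of_ACH} the curvature term $-2\tensor{R}{_I_K_J_L}\sigma^{KL}$ of the Lichnerowicz operator is a scalar multiple of the identity on each such block. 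Hence $\mathcal{I}$ is an explicitly computable scalar on every block.

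Next I would read off the indicial roots: at weight $j$ the indicial operator on a given block is $-\tfrac14 j(j-2n-2)$ plus the eigenvalue of $\mathcal{I}$ on that block. One checks that on the $\sigma_{\alpha\beta}$-block --- the one carrying the infinitesimal CR deformation $\psi_{\alpha\beta}$ --- the eigenvalue of $\mathcal{I}$ is $0$, so the indicial roots there are $0$ and $2n+2$, whereas on every other block the eigenvalue makes $-\tfrac14 j(j-2n-2)+(\text{eigenvalue})$ invertible for all integers $0\le j<2n+2$. Therefore a formal solution of $(\Delta_\mathrm{L}+n+2)\sigma=O(\rho^\infty)$ with $\sigma(\bm{Z}_\alpha,\bm{Z}_\beta)|_M=\psi_{\alpha\beta}$ is forced to have vanishing boundary value in every slot other than $\sigma_{\alpha\beta}$ and $\sigma_{\conj{\alpha}\conj{\beta}}$, is uniquely determined through order $\rho^{2n+1}$, and at order $\rho^{2n+2}$ meets an obstruction, lying in the $\sigma_{\alpha\beta}$- and $\sigma_{\conj{\alpha}\conj{\beta}}$-slots and depending on $\psi$ by a universal linear formula; by Borel's lemma this yields the real $C^\infty$-smooth $\sigma$, the only ambiguity being the free coefficient of $\rho^{2n+2}$ in the $\sigma_{\alpha\beta}$-slot, which gives existence and uniqueness modulo $O(\rho^{2n+2})$. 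The identities~\eqref{eq:transverse_tracelessness} follow from the structure of the equation: contracting with $g$ and taking the divergence, and using that $\Delta_\mathrm{L}$ commutes with $\tr_g$ and interacts with $\delta$ through the asymptotically complex hyperbolic curvature of $g$ in a controlled way, one finds that $\tr_g\sigma$ and $\delta\sigma$ satisfy equations of the same indicial type whose leading exponents lie outside $[0,2n+2)$; since the non-$\psi$ boundary slots vanish, the Dirichlet values $\tr_g\sigma|_M$ and $\delta\sigma|_M$ are zero, so $\tr_g\sigma,\delta\sigma=O(\rho^{2n+2})$.

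Finally, assuming the approximate Einstein condition~\eqref{eq:approx_Einstein}, I would identify the obstruction with $-\mathcal{O}^\bullet\psi$. Take a smooth $1$-parameter family $T^{1,0}_t$ tangent to $\psi$ and the family $g^t$ of normalized smooth approximate ACH--Einstein metrics of Theorem~\ref{thm:existence} with $g^0=g$, and put $\sigma:=-\tfrac12 g^\bullet$, where $g^\bullet=\frac{d}{dt}g^t\big|_{t=0}$; by the computation of $k_\rho^\bullet$ in the proof of Theorem~\ref{thm:first_variational_formula} one has $\tensor{(\sigma|_M)}{_\alpha_\beta}=\psi_{\alpha\beta}$. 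Since $E^t:=\Ric(g^t)+\tfrac{n+2}{2}g^t=O(\rho^{2n+2})$ is almost divergence-free by the contracted second Bianchi identity and $g^t$ is in the normalized gauge, $g^\bullet$ is trace- and divergence-free to order $2n+2$; combined with $2\,DE_g(h)=(\Delta_\mathrm{L}+n+2)h$ on such $h$, modulo the $O(\rho^{2n+2})$-correction caused by $E(g)=O(\rho^{2n+2})$, this shows that $\sigma$ solves~\eqref{eq:linearized_Einstein_equation} and that $\tilde\sigma=(\Delta_\mathrm{L}+n+2)\sigma=-E^\bullet+O(\rho^{2n+2})$. As $\tensor{\mathcal{O}}{_\alpha_\beta}$ is by definition the boundary value of $\rho^{-2n-2}\tensor{E}{_\alpha_\beta}$, differentiating along the family gives $\tensor{(\rho^{-2n-2}E^\bullet|_M)}{_\alpha_\beta}=\tensor*{\mathcal{O}}{^\bullet_\alpha_\beta}$, and therefore $\tensor{(\rho^{-2n-2}\tilde\sigma|_M)}{_\alpha_\beta}=-\tensor*{\mathcal{O}}{^\bullet_\alpha_\beta}$. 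I expect the bulk of the work --- and the main obstacle --- to be the indicial computation of the first two paragraphs: pinning down the eigenvalues of $\mathcal{I}$ on every block and verifying the non-resonance on $0\le j<2n+2$ that simultaneously makes the construction canonical and places the obstruction exactly at order $\rho^{2n+2}$. The contracted-Bianchi and trace bookkeeping behind~\eqref{eq:transverse_tracelessness}, and the verification that $g^\bullet$ is trace- and divergence-free to the required order, are the remaining delicate points.
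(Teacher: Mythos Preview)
Your first two paragraphs essentially follow the paper's approach: the paper computes the indicial behavior block by block in Lemma~\ref{lem:Laplacian_on_tensors} (your $\mathcal{I}$ is exactly what equations~\eqref{eq:Lichnerowicz_Laplacian_of_2-forms} encode), checks that the $\sigma_{\alpha\beta}$-block has indicial roots $0$ and $2n+2$ while every other block is non-resonant on $0\le j\le 2n+1$, and deduces existence and uniqueness. For~\eqref{eq:transverse_tracelessness} the paper argues precisely as you outline: $(\Delta+n+2)(\tr\sigma)=O(\rho^{2n+2})$ with indicial roots $-2,\,2n+4$, and $(\Delta_\mathrm{H}+n+2)(\delta\sigma)=O(\rho^{2n+2})$ via the commutation formula~\eqref{eq:divergence_Laplacian_commutation}, with indicial data~\eqref{eq:Hodge_Laplacian_on_1-forms}.

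The genuine gap is in your last paragraph. Setting $\sigma_{\mathrm{normal}}=-\tfrac12 g^\bullet$ gives a solution of the \emph{linearized Einstein} equation $2(\Ric^\bullet+\tfrac{n+2}{2})\sigma_{\mathrm{normal}}=O(\rho^{2n+2})$, but this is not the Lichnerowicz equation: by~\eqref{eq:linearized_Einstein_equation_restated} the two differ by $\mathcal{K}\mathcal{B}\sigma_{\mathrm{normal}}$. Your claim that ``$g^\bullet$ is trace- and divergence-free to order $2n+2$'' from the normalized gauge and Bianchi does not hold. The normalized gauge only forces $(g^\bullet)_{\infty I}=0$; it is \emph{not} the Bianchi (harmonic) gauge, and the contracted Bianchi identity constrains $E^t$, not $g^\bullet$ itself. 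In fact the paper only shows $\mathcal{B}\sigma_{\mathrm{normal}}=O(\rho)$ (using~\eqref{eq:divergence_of_2-form} and the fact that only the $\alpha\beta$-slots survive at the boundary), not $O(\rho^{2n+2})$. What the paper actually does is construct a $1$-$\Theta$-form $\eta$ solving $\mathcal{B}\mathcal{K}\eta=(\Delta_\mathrm{H}+n+2)\eta+O(\rho^{2n+2})=-\mathcal{B}\sigma_{\mathrm{normal}}+O(\rho^{2n+2})$ via~\eqref{eq:Hodge_Laplacian_on_1-forms}, sets $\sigma=\sigma_{\mathrm{normal}}+\mathcal{K}\eta$, and then checks two things: that this $\sigma$ satisfies~\eqref{eq:linearized_Einstein_equation}, and that the gauge correction does not disturb the $(\alpha\beta)$-component of the obstruction, i.e.\ $(\mathcal{L}_{\eta^\sharp}E)_{\alpha\beta}=O(\rho^{2n+3})$ and $(\mathcal{K}\mathcal{B}\sigma)_{\alpha\beta}=O(\rho^{2n+3})$. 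The first of these uses crucially that $\eta=O(\rho)$, which in turn comes from $\mathcal{B}\sigma_{\mathrm{normal}}=O(\rho)$. Without this gauge-fixing step, your identification of $\tilde\sigma$ with $-E^\bullet$ breaks down.
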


Note that, if $\sigma$ satisfies \eqref{eq:linearized_Einstein_equation} and \eqref{eq:transverse_tracelessness},
then
\begin{equation}
	\label{eq:linearized_Einstein_equation_restated}
	\Ric^\bullet\sigma+\frac{1}{2}(n+2)\sigma
	=\frac{1}{2}((\Delta_\mathrm{L}+n+2)\sigma-\mathcal{K}\mathcal{B}\sigma)=O(\rho^{2n+2}),
\end{equation}
where $\Ric^\bullet$ denotes the linearized Ricci operator and
$\mathcal{K}$, $\mathcal{B}$ are the Killing and Bianchi operators:
\begin{equation*}
	\tensor{(\mathcal{K}\eta)}{_P_Q}:=2\tensor{\nabla}{_(_P}\tensor{\eta}{_Q_)}\qquad\text{and}\qquad
	\tensor{(\mathcal{B}\sigma)}{_P}
	:=\tensor{(\delta\sigma)}{_P}+\frac{1}{2}\tensor{\nabla}{_P}(\tr\sigma).
\end{equation*}

To prove Proposition \ref{prop:lichnerowicz_equation}, we need the following lemma.

\begin{lem}
	\label{lem:Laplacian_on_tensors}
	Let $g$ be a $C^\infty$-smooth normalized ACH metric and $j\ge 0$ an integer.
	If $\mu$ is an $O(\rho^j)$ 1-$\Theta$-form,
	then the components of $(\Delta_\mathrm{H}+n+2)\mu$ with respect to the local frame
	$\set{\bm{Z}_P}$, where $\Delta_\mathrm{H}$ is the Hodge Laplacian, are
	\begin{subequations}
		\label{eq:Hodge_Laplacian_on_1-forms}
	\begin{align}
		(\Delta_\mathrm{H}+n+2)\tensor{\mu}{_\tau}
		&=-\tfrac{1}{4}(j+2)(j-2n-4)\tensor{\mu}{_\tau}+O(\rho^{j+1}),\\
		(\Delta_\mathrm{H}+n+2)\tensor{\mu}{_\alpha}
		&=-\tfrac{1}{4}\left(j^2-(2n+2)j-2n-7\right)\tensor{\mu}{_\alpha}+O(\rho^{j+1}).
	\end{align}
	\end{subequations}
	If $\sigma$ is an $O(\rho^j)$ symmetric 2-$\Theta$-form, then the components of $\delta\sigma$ are
	\begin{subequations}
		\label{eq:divergence_of_2-form}
	\begin{align}
		\tensor{(\delta\sigma)}{_\tau}
		&=-\tfrac{1}{4}(j-2n-4)\tensor{\sigma}{_\tau_\tau}-\tfrac{1}{4}(j-2n)\tensor{\sigma}{_\tau_{\conj{\tau}}}
		-\tr_h\tensor{\sigma}{_\alpha_{\conj{\beta}}}+O(\rho^{j+1}),\\
		\tensor{(\delta\sigma)}{_\alpha}
		&=-\tfrac{1}{4}(j-2n-5)\tensor{\sigma}{_\tau_\alpha}
		-\tfrac{1}{4}(j-2n-1)\tensor{\sigma}{_{\conj{\tau}}_\alpha}+O(\rho^{j+1}).
	\end{align}
	\end{subequations}
	Under the same assumption, the components of $(\Delta_\mathrm{L}+n+2)\sigma$ are
	\begin{subequations}
		\label{eq:Lichnerowicz_Laplacian_of_2-forms}
	\begin{align}
		(\Delta_\mathrm{L}+n+2)\tensor{\sigma}{_\tau_\tau}
		&=-\tfrac{1}{4}(j+2)(j-2n-4)\tensor{\sigma}{_\tau_\tau}+O(\rho^{j+1}),\\
		(\Delta_\mathrm{L}+n+2)\tensor{\sigma}{_\tau_\alpha}
		&=-\tfrac{1}{4}\left(j^2-(2n+2)j-2n-7\right)\tensor{\sigma}{_\tau_\alpha}+O(\rho^{j+1}),\\
		(\Delta_\mathrm{L}+n+2)\tensor{\sigma}{_\alpha_\beta}
		&=-\tfrac{1}{4}j(j-2n-2)\tensor{\sigma}{_\alpha_\beta}+O(\rho^{j+1}),\\
		(\Delta_\mathrm{L}+n+2)\tensor{\sigma}{_\tau_{\conj{\tau}}}
		&=-\tfrac{1}{4}(j+2)(j-2n-4)\tensor{\sigma}{_\tau_{\conj{\tau}}}+O(\rho^{j+1}),\\
		(\Delta_\mathrm{L}+n+2)\tensor{\sigma}{_\tau_{\conj{\alpha}}}
		&=-\tfrac{1}{4}\left(j^2-(2n+2)j-2n-7\right)\tensor{\sigma}{_\tau_{\conj{\alpha}}}+O(\rho^{j+1}),\\
		\tr_h(\Delta_\mathrm{L}+n+2)\tensor{\sigma}{_\alpha_{\conj{\beta}}}
		&=-\tfrac{1}{4}(j+2)(j-2n-4)\tr_h\tensor{\sigma}{_\alpha_{\conj{\beta}}}+O(\rho^{j+1}),\\
		\tf_h(\Delta_\mathrm{L}+n+2)\tensor{\sigma}{_\alpha_{\conj{\beta}}}
		&=-\tfrac{1}{4}\left(j^2-(2n+2)j-8\right)\tf_h\tensor{\sigma}{_\alpha_{\conj{\beta}}}+O(\rho^{j+1}).
	\end{align}
	\end{subequations}
	Here, $\tr_h$ and $\tf_h$ denote the trace and the trace-free part with respect to
	$\tensor{h}{_\alpha_{\conj{\beta}}}$.
\end{lem}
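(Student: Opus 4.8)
The plan is to reduce all three assertions to the computation of \emph{indicial operators}. The Levi--Civita connection $\nabla$ of $g$ is a $\Theta$-connection, so $\delta$, the rough Laplacian $\nabla^*\nabla$, the Hodge Laplacian $\Delta_\mathrm{H}$ and the Lichnerowicz Laplacian $\Delta_\mathrm{L}$ are all $\Theta$-differential operators expressible through $\nabla$ and the curvature. The basic observation I would set up first is: if a $\Theta$-tensor has all its components with respect to $\set{\bm{Z}_P}$ of order $O(\rho^j)$, then $\nabla_{\bm{Z}_P}$ applied to a component amounts, modulo $O(\rho^{j+1})$, to multiplication by $\tfrac{j}{2}$ when $P\in\set{\tau,\conj{\tau}}$ and by $0$ when $P\in\set{\alpha,\conj{\alpha}}$ --- since $\bm{Z}_\infty\rho^m=m\rho^m$ while $\bm{Z}_0\rho^m=\bm{Z}_\alpha\rho^m=0$, and $\bm{Z}_\tau=\tfrac12\bm{Z}_\infty+i\bm{Z}_0$, $\bm{Z}_{\conj{\tau}}=\tfrac12\bm{Z}_\infty-i\bm{Z}_0$ --- plus the purely algebraic action of the connection coefficients, whose boundary values are \eqref{eq:Christoffel}. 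Combined with the boundary values $\tensor{g}{_\tau_{\conj{\tau}}}=2$, $\tensor{g}{_\alpha_{\conj{\beta}}}=\tensor{h}{_\alpha_{\conj{\beta}}}$, $\tensor{g}{_A_B}=0$ on $M$ (hence $\tensor{g}{^\tau^{\conj{\tau}}}=\tfrac12$, $\tensor{g}{^\alpha^{\conj{\beta}}}=\tensor{h}{^\alpha^{\conj{\beta}}}$, $\tensor{g}{^A^B}=0$ on $M$), this turns the leading behavior of any operator built from $\nabla$ into a finite algebraic computation whose input is just \eqref{eq:Christoffel}.

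I would then carry this out for $\delta$ first, as it is first order: from $\tensor{(\delta\sigma)}{_P}=-\tensor{g}{^Q^R}\nabla_{\bm{Z}_Q}\tensor{\sigma}{_R_P}$, substituting the rule above together with \eqref{eq:Christoffel} gives \eqref{eq:divergence_of_2-form}; here the contributions $\tensor{g}{^\alpha^{\conj{\beta}}}\tensor{\Gamma}{^\gamma_\alpha_\tau}=-\tensor{g}{^\alpha^{\conj{\beta}}}\tensor{\delta}{_\alpha^\gamma}$ and $\tensor{\Gamma}{^\tau_{\conj{\alpha}}_\beta}=\tfrac12\tensor{h}{_\beta_{\conj{\alpha}}}$ are exactly what produce the $\tr_h\tensor{\sigma}{_\alpha_{\conj{\beta}}}$ term. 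For $\Delta_\mathrm{H}$ and $\Delta_\mathrm{L}$ I would invoke the Bochner--Weitzenb\"ock and Lichnerowicz formulas, which express them as $\nabla^*\nabla$ (up to the sign conventions of the paper) modified by zeroth-order terms linear in the curvature --- a Ricci term for $\Delta_\mathrm{H}$ on $1$-forms, and a Ricci term together with the curvature-operator term $\tensor{(\mathring R\sigma)}{_P_Q}=\tensor{R}{_P^R_Q^S}\tensor{\sigma}{_R_S}$ for $\Delta_\mathrm{L}$ on symmetric $2$-tensors. Since $\mu$ and $\sigma$ are $O(\rho^j)$, the curvature terms may be replaced by their boundary values modulo $O(\rho^{j+1})$, and by Proposition \ref{prop:asymptotic_complex_hyperbolicity} these are explicit: the only nonzero boundary components are the K\"ahler-type ones with $\tensor{R}{_A_{\conj{B}}_C_{\conj{D}}}|_M=-\tfrac12(\tensor{g}{_A_{\conj{B}}}\tensor{g}{_C_{\conj{D}}}+\tensor{g}{_A_{\conj{D}}}\tensor{g}{_C_{\conj{B}}})|_M$, whence $\Ric|_M=-\tfrac{n+2}{2}g|_M$. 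The indicial part of $\nabla^*\nabla=-\tensor{g}{^P^Q}\nabla_{\bm{Z}_P}\nabla_{\bm{Z}_Q}$ is read off from \eqref{eq:Christoffel} just as for $\delta$; adding the constant $n+2$ and the boundary curvature contributions then yields \eqref{eq:Hodge_Laplacian_on_1-forms} and \eqref{eq:Lichnerowicz_Laplacian_of_2-forms}.

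The step I expect to be the main obstacle is the Lichnerowicz curvature term $\mathring R\sigma$: one must determine precisely which components it couples at $M$ and with what coefficients. Because the boundary curvature is of K\"ahler type, $\mathring R$ at $M$ respects the decomposition of $\sigma$ into its $\tensor{\sigma}{_A_B}$-part and its Hermitian $\tensor{\sigma}{_A_{\conj{B}}}$-part, and on the latter it acts differently on $\tr_h\tensor{\sigma}{_\alpha_{\conj{\beta}}}$ and on $\tf_h\tensor{\sigma}{_\alpha_{\conj{\beta}}}$ --- this is the origin of the two distinct indicial polynomials in the last two lines of \eqref{eq:Lichnerowicz_Laplacian_of_2-forms}. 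Tracking this coupling, together with the (simpler but lengthier) second-derivative bookkeeping for each of the five component types --- where, using the coefficients $\tensor{\Gamma}{^\tau_\tau_\tau}=-1$, $\tensor{\Gamma}{^\tau_{\conj{\tau}}_\tau}=1$, $\tensor{\Gamma}{^\gamma_\alpha_\tau}=-\tensor{\delta}{_\alpha^\gamma}$, $\tensor{\Gamma}{^\gamma_\tau_\beta}=-\tfrac12\tensor{\delta}{_\beta^\gamma}$, $\tensor{\Gamma}{^\gamma_{\conj{\tau}}_\beta}=\tfrac12\tensor{\delta}{_\beta^\gamma}$, $\tensor{\Gamma}{^\tau_{\conj{\alpha}}_\beta}=\tfrac12\tensor{h}{_\beta_{\conj{\alpha}}}$, one checks that the would-be off-diagonal couplings (between $\tensor{\sigma}{_\tau_\tau}$ and $\tensor{\sigma}{_\tau_{\conj{\tau}}}$, and between $\tensor{\sigma}{_\tau_\alpha}$ and $\tensor{\sigma}{_{\conj{\tau}}_\alpha}$) cancel to leading order --- is the bulk of the work. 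Once the algebra is organized, the remaining computations are mechanical, and the stated indicial polynomials follow by direct evaluation; \eqref{eq:linearized_Einstein_equation_restated} is then an immediate consequence of the Lichnerowicz formula and the definitions of $\mathcal{K}$ and $\mathcal{B}$.
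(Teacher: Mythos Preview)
Your proposal is correct and follows essentially the same route as the paper: both compute the indicial operators by combining the boundary Christoffel symbols \eqref{eq:Christoffel}, the boundary curvature from Proposition~\ref{prop:asymptotic_complex_hyperbolicity}, and the Weitzenb\"ock/Lichnerowicz identities expressing $\Delta_\mathrm{H}$ and $\Delta_\mathrm{L}$ as $\nabla^*\nabla$ plus curvature terms. The only organizational difference is that the paper first strips off the $\rho^j$ factor---proving $\Delta_\mathrm{H}(\rho^j\tilde\nu)=\rho^j\Delta_\mathrm{H}\tilde\nu-\tfrac14 j(j-2n-2)\rho^j\tilde\nu+O(\rho^{j+1})$ once and for all via $\Delta(\log\rho)=(n+1)/2+O(\rho)$---and then does the entire Christoffel-symbol computation at $j=0$, whereas you carry the $j/2$ contributions alongside the Christoffel terms throughout; the arithmetic is identical. (Your final remark about \eqref{eq:linearized_Einstein_equation_restated} is extraneous: that identity is not part of this lemma.)
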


\begin{proof}
	We first explain that it suffices to assume $j=0$. Let $\nu=\rho^j\tilde{\nu}$ be any $O(\rho^j)$
	$\Theta$-tensor. Then $\nabla\nu=\rho^j\nabla\tilde{\nu}+j\rho^jd(\log\rho)\otimes\tilde{\nu}$, so we have
	\begin{equation*}
		\delta\nu
		=\rho^j\delta\tilde{\nu}-\tfrac{1}{4}j\rho^j(\rho\partial_\rho)\contraction\tilde{\nu}
		=\rho^j\delta\tilde{\nu}-\tfrac{1}{4}j\rho^j(\bm{Z}_\tau+\bm{Z}_{\conj{\tau}})\contraction\tilde{\nu},
	\end{equation*}
	which implies that \eqref{eq:divergence_of_2-form} follows from the $j=0$ case. Next we compute
	\begin{equation*}
		\nabla^2\nu=\rho^j\nabla^2\tilde{\nu}+2j\rho^jd(\log\rho)\otimes\nabla\tilde{\nu}
		+j^2\rho^jd(\log\rho)\otimes d(\log\rho)\otimes\tilde{\nu}
		+j\rho^j\nabla^2(\log\rho)\otimes\tilde{\nu},
	\end{equation*}
	and hence
	\begin{equation*}
		\nabla^*\nabla\nu
		=\rho^j\nabla^*\nabla\tilde{\nu}
		-\tfrac{1}{2}j\rho^j\nabla_{\rho\partial_\rho}\tilde{\nu}
		-\tfrac{1}{4}j^2\rho^j\tilde{\nu}
		+j\rho^j\Delta(\log\rho)\cdot\tilde{\nu}.
	\end{equation*}
	Proposition \ref{prop:ACH_Laplacian} shows that $\Delta(\log\rho)=(n+1)/2+O(\rho)$, while
	\eqref{eq:Christoffel} implies that $\nabla_{\rho\partial_\rho}\tilde{\nu}=O(\rho)$.
	Since the difference between $\Delta_\mathrm{H}$ and $\nabla^*\nabla$ is just a linear action of
	the curvature tensor, we obtain
	$\Delta_\mathrm{H}\nu=\rho^j\Delta_\mathrm{H}\tilde{\nu}-\tfrac{1}{4}j(j-2n-2)\rho^j\tilde{\nu}+O(\rho^{j+1})$.

	Now we assume $j=0$, and in the following computation we omit the $O(\rho)$ terms,
	which is symbolically indicated by $\equiv$.
	By \eqref{eq:Christoffel} we obtain
	\begin{alignat*}{4}
		\nabla_\tau\tensor{\mu}{_\tau}&\equiv\tensor{\mu}{_\tau},&\qquad
		\nabla_{\conj{\tau}}\tensor{\mu}{_\tau}&\equiv -\tensor{\mu}{_\tau},&\qquad
		\nabla_\beta\tensor{\mu}{_\tau}&\equiv\tensor{\mu}{_\beta},&\qquad
		\nabla_{\conj{\beta}}\tensor{\mu}{_\tau}&\equiv 0,\\
		\nabla_\tau\tensor{\mu}{_\alpha}&\equiv\tfrac{1}{2}\tensor{\mu}{_\alpha},&\qquad
		\nabla_{\conj{\tau}}\tensor{\mu}{_\alpha}&\equiv -\tfrac{1}{2}\tensor{\mu}{_\alpha},&\qquad
		\nabla_\beta\tensor{\mu}{_\alpha}&\equiv 0,&\qquad
		\nabla_{\conj{\beta}}\tensor{\mu}{_\alpha}
		&\equiv -\tfrac{1}{2}\tensor{h}{_\alpha_{\conj{\beta}}}\tensor{\mu}{_\tau},
	\end{alignat*}
	and hence
	\begin{equation*}
		\nabla_\tau\nabla_{\conj{\tau}}\tensor{\mu}{_\tau} \equiv 0,\qquad
		\nabla_\beta\nabla_{\conj{\gamma}}\tensor{\mu}{_\tau} \equiv 0,\qquad
		\nabla_\tau\nabla_{\conj{\tau}}\tensor{\mu}{_\alpha} \equiv \tfrac{1}{4}\tensor{\mu}{_\alpha},\qquad
		\nabla_\beta\nabla_{\conj{\gamma}}\tensor{\mu}{_\alpha}
		\equiv \tfrac{1}{4}\tensor{h}{_\beta_{\conj{\gamma}}}\tensor{\mu}{_\alpha}.
	\end{equation*}
	Thus we have
	$\tensor{\nabla}{_B}\tensor{\nabla}{^B}\tensor{\mu}{_\tau}\equiv 0$ and
	$\tensor{\nabla}{_B}\tensor{\nabla}{^B}\tensor{\mu}{_\alpha}
	\equiv\tfrac{1}{8}(2n+1)\tensor{\mu}{_\alpha}$.
	Hence we obtain \eqref{eq:Hodge_Laplacian_on_1-forms} in the $j=0$ case, because
	by Proposition \ref{prop:asymptotic_complex_hyperbolicity},
	\begin{equation*}
		\begin{split}
			(\Delta_\mathrm{H}+n+2)\tensor{\mu}{_A}
			&\equiv \nabla^*\nabla\tensor{\mu}{_A}+\tensor{\Ric}{_A^B}\tensor{\mu}{_B}
				+(n+2)\tensor{\mu}{_A}\\
			&\equiv -2\tensor{\nabla}{_B}\tensor{\nabla}{^B}\tensor{\mu}{_A}
				-\tensor{R}{_B^B_A^C}\tensor{\mu}{_C}+\tensor{\Ric}{_A^B}\tensor{\mu}{_B}
				+(n+2)\tensor{\mu}{_A}\\
			&\equiv -2\tensor{\nabla}{_B}\tensor{\nabla}{^B}\tensor{\mu}{_A}+(n+2)\tensor{\mu}{_A}.
		\end{split}
	\end{equation*}
	Similarly,
	\allowdisplaybreaks
	\begin{alignat*}{4}
		\nabla_\tau\tensor{\sigma}{_\tau_\tau}&\equiv 2\tensor{\sigma}{_\tau_\tau},&\quad
		\nabla_{\conj{\tau}}\tensor{\sigma}{_\tau_\tau}&\equiv -2\tensor{\sigma}{_\tau_\tau},&\quad
		\nabla_\gamma\tensor{\sigma}{_\tau_\tau}&\equiv 2\tensor{\sigma}{_\tau_\gamma},&\quad
		\nabla_{\conj{\gamma}}\tensor{\sigma}{_\tau_\tau}&\equiv 0,\\
		\nabla_\tau\tensor{\sigma}{_\tau_\alpha}&\equiv \tfrac{3}{2}\tensor{\sigma}{_\tau_\alpha},&\quad
		\nabla_{\conj{\tau}}\tensor{\sigma}{_\tau_\alpha}
		&\equiv -\tfrac{3}{2}\tensor{\sigma}{_\tau_\alpha},&\quad
		\nabla_\gamma\tensor{\sigma}{_\tau_\alpha}&\equiv \tensor{\sigma}{_\alpha_\gamma},&\quad
		\nabla_{\conj{\gamma}}\tensor{\sigma}{_\tau_\alpha}
		&\equiv -\tfrac{1}{2}\tensor{h}{_\alpha_{\conj{\gamma}}}\tensor{\sigma}{_\tau_\tau},\\
		\nabla_\tau\tensor{\sigma}{_\alpha_\beta}&\equiv\tensor{\sigma}{_\alpha_\beta},&\quad
		\nabla_{\conj{\tau}}\tensor{\sigma}{_\alpha_\beta}&\equiv -\tensor{\sigma}{_\alpha_\beta},&\quad
		\nabla_\gamma\tensor{\sigma}{_\alpha_\beta}&\equiv 0,&\quad
		\nabla_{\conj{\gamma}}\tensor{\sigma}{_\alpha_\beta}
		&\equiv -\tensor{h}{_(_\alpha_|_{\conj{\gamma}}}\tensor{\sigma}{_\tau_|_\beta_)},\\
		\nabla_\tau\tensor{\sigma}{_\tau_{\conj{\tau}}}&\equiv 0,&\quad
		\nabla_{\conj{\tau}}\tensor{\sigma}{_\tau_{\conj{\tau}}}&\equiv 0,&\quad
		\nabla_\gamma\tensor{\sigma}{_\tau_{\conj{\tau}}}&\equiv \tensor{\sigma}{_\gamma_{\conj{\tau}}},&\quad
		\nabla_{\conj{\gamma}}\tensor{\sigma}{_\tau_{\conj{\tau}}}
		&\equiv \tensor{\sigma}{_\tau_{\conj{\gamma}}},\\
		\nabla_\tau\tensor{\sigma}{_\tau_{\conj{\alpha}}}
		&\equiv \tfrac{1}{2}\tensor{\sigma}{_\tau_{\conj{\alpha}}},&\quad
		\nabla_{\conj{\tau}}\tensor{\sigma}{_\tau_{\conj{\alpha}}}
		&\equiv -\tfrac{1}{2}\tensor{\sigma}{_\tau_{\conj{\alpha}}},&\quad
		\nabla_\gamma\tensor{\sigma}{_\tau_{\conj{\alpha}}}
		&\equiv \tensor{\sigma}{_\gamma_{\conj{\alpha}}}
			-\tfrac{1}{2}\tensor{h}{_\gamma_{\conj{\alpha}}}\tensor{\sigma}{_\tau_{\conj{\tau}}},&\quad
		\nabla_{\conj{\gamma}}\tensor{\sigma}{_\tau_{\conj{\alpha}}}&\equiv 0,\\
		\nabla_\tau\tensor{\sigma}{_\alpha_{\conj{\beta}}}&\equiv 0,&\quad
		\nabla_{\conj{\tau}}\tensor{\sigma}{_\alpha_{\conj{\beta}}}&\equiv 0,&\quad
		\nabla_\gamma\tensor{\sigma}{_\alpha_{\conj{\beta}}}
		&\equiv -\tfrac{1}{2}\tensor{h}{_\gamma_{\conj{\beta}}}\tensor{\sigma}{_\alpha_{\conj{\tau}}},&\quad
		\nabla_{\conj{\gamma}}\tensor{\sigma}{_\alpha_{\conj{\beta}}}
		&\equiv -\tfrac{1}{2}\tensor{h}{_\alpha_{\conj{\gamma}}}\tensor{\sigma}{_\tau_{\conj{\beta}}},
	\end{alignat*}
	\allowdisplaybreaks[0]%
	and we obtain \eqref{eq:divergence_of_2-form} for $j=0$. Moreover,
	\allowdisplaybreaks
	\begin{alignat*}{2}
		\nabla_\tau\nabla_{\conj{\tau}}\tensor{\sigma}{_\tau_\tau}
		&\equiv -2\tensor{\sigma}{_\tau_\tau},&\qquad
		\nabla_\gamma\nabla_{\conj{\delta}}\tensor{\sigma}{_\tau_\tau}
		&\equiv 0,\\
		\nabla_\tau\nabla_{\conj{\tau}}\tensor{\sigma}{_\tau_\alpha}
		&\equiv -\tfrac{3}{4}\tensor{\sigma}{_\tau_\alpha},&\qquad
		\nabla_\gamma\nabla_{\conj{\delta}}\tensor{\sigma}{_\tau_\alpha}
		&\equiv \tfrac{1}{4}\tensor{h}{_\gamma_{\conj{\delta}}}\tensor{\sigma}{_\tau_\alpha}
			-\tfrac{1}{2}\tensor{h}{_\alpha_{\conj{\delta}}}\tensor{\sigma}{_\tau_\gamma},\\
		\nabla_\tau\nabla_{\conj{\tau}}\tensor{\sigma}{_\alpha_\beta}
		&\equiv 0,&\qquad
		\nabla_\gamma\nabla_{\conj{\delta}}\tensor{\sigma}{_\alpha_\beta}
		&\equiv \tfrac{1}{2}\tensor{h}{_\gamma_{\conj{\delta}}}\tensor{\sigma}{_\alpha_\beta},\\
		\nabla_\tau\nabla_{\conj{\tau}}\tensor{\sigma}{_\tau_{\conj{\tau}}}
		&\equiv 0,&\qquad
		\nabla_\gamma\nabla_{\conj{\delta}}\tensor{\sigma}{_\tau_{\conj{\tau}}}
		&\equiv \tensor{\sigma}{_\alpha_{\conj{\beta}}}
			-\tfrac{1}{2}\tensor{h}{_\alpha_{\conj{\beta}}}\tensor{\sigma}{_\tau_{\conj{\tau}}},\\
		\nabla_\tau\nabla_{\conj{\tau}}\tensor{\sigma}{_\tau_{\conj{\alpha}}}
		&\equiv \tfrac{1}{4}\tensor{\sigma}{_\tau_{\conj{\alpha}}},&\qquad
		\nabla_\gamma\nabla_{\conj{\delta}}\tensor{\sigma}{_\tau_{\conj{\alpha}}}
		&\equiv -\tfrac{1}{4}\tensor{h}{_\gamma_{\conj{\delta}}}\tensor{\sigma}{_\tau_{\conj{\alpha}}}
			-\tfrac{1}{2}\tensor{h}{_\gamma_{\conj{\alpha}}}\tensor{\sigma}{_\tau_{\conj{\delta}}},\\
		\nabla_\tau\nabla_{\conj{\tau}}\tensor{\sigma}{_\alpha_{\conj{\beta}}}
		&\equiv 0,&\qquad
		\nabla_\gamma\nabla_{\conj{\delta}}\tensor{\sigma}{_\alpha_{\conj{\beta}}}
		&\equiv -\tfrac{1}{2}\tensor{h}{_\gamma_{\conj{\beta}}}\tensor{\sigma}{_\alpha_{\conj{\delta}}}
			+\tfrac{1}{4}\tensor{h}{_\alpha_{\conj{\delta}}}\tensor{h}{_\gamma_{\conj{\beta}}}
			\tensor{\sigma}{_\tau_{\conj{\tau}}},
	\end{alignat*}
	\allowdisplaybreaks[0]%
	and therefore
	\begin{subequations}
		\label{eq:partial_Laplacian_on_2-tensors}
	\begin{alignat}{2}
		\nabla_C\nabla^C\tensor{\sigma}{_\tau_\tau}
		&\equiv -\tensor{\sigma}{_\tau_\tau},&\qquad
		\nabla_C\nabla^C\tensor{\sigma}{_\tau_\alpha}
		&\equiv \tfrac{1}{8}(2n-7)\tensor{\sigma}{_\tau_\alpha},\\
		\nabla_C\nabla^C\tensor{\sigma}{_\alpha_\beta}
		&\equiv \tfrac{1}{2}n\tensor{\sigma}{_\alpha_\beta},&\qquad
		\nabla_C\nabla^C\tensor{\sigma}{_\tau_{\conj{\tau}}}
		&\equiv -\tfrac{1}{2}n\tensor{\sigma}{_\tau_{\conj{\tau}}}+\tr_h\tensor{\sigma}{_\gamma_{\conj{\delta}}},\\
		\nabla_C\nabla^C\tensor{\sigma}{_\tau_{\conj{\alpha}}}
		&\equiv -\tfrac{1}{8}(2n+3)\tensor{\sigma}{_\tau_{\conj{\alpha}}},&\qquad
		\nabla_C\nabla^C\tensor{\sigma}{_\alpha_{\conj{\beta}}}
		&\equiv -\tfrac{1}{2}\tensor{\sigma}{_\alpha_{\conj{\beta}}}
			+\tfrac{1}{4}\tensor{h}{_\alpha_{\conj{\beta}}}\tensor{\sigma}{_\tau_{\conj{\tau}}}.
	\end{alignat}
	\end{subequations}
	Furthermore, by Proposition \ref{prop:asymptotic_complex_hyperbolicity},
	\begin{equation*}
		\begin{split}
			(\Delta_\mathrm{L}+n+2)\tensor{\sigma}{_A_B}
			&\equiv \nabla^*\nabla\tensor{\sigma}{_A_B}
			+2\tensor{\Ric}{_(_A^C}\tensor{\sigma}{_B_)_C}
			+2\tensor{R}{_A^C_B^D}\tensor{\sigma}{_C_D}
			+(n+2)\tensor{\sigma}{_A_B}\\
			&\equiv -2\nabla_C\nabla^C\tensor{\sigma}{_A_B}
			-\tensor{R}{_C^C_A^D}\tensor{\sigma}{_D_B}-\tensor{R}{_C^C_B^D}\tensor{\sigma}{_A_D}\\
			&\phantom{\equiv}\quad+2\tensor{\Ric}{_(_A^C}\tensor{\sigma}{_B_)_C}
			+2\tensor{R}{_A^C_B^D}\tensor{\sigma}{_C_D}+(n+2)\tensor{\sigma}{_A_B}\\
			&\equiv -2\nabla_C\nabla^C\tensor{\sigma}{_A_B}+n\tensor{\sigma}{_A_B}
		\end{split}
	\end{equation*}
	and
	\begin{equation*}
		\begin{split}
			(\Delta_\mathrm{L}+n+2)\tensor{\sigma}{_A_{\conj{B}}}
			&\equiv \nabla^*\nabla\tensor{\sigma}{_A_{\conj{B}}}
			+\tensor{\Ric}{_A^C}\tensor{\sigma}{_C_{\conj{B}}}
			+\tensor{\Ric}{_{\conj{B}}^{\conj{C}}}\tensor{\sigma}{_A_{\conj{C}}}
			+2\tensor{R}{_A^C_{\conj{B}}^{\conj{D}}}\tensor{\sigma}{_C_{\conj{D}}}
			+(n+2)\tensor{\sigma}{_A_{\conj{B}}}\\
			&\equiv -2\nabla_C\nabla^C\tensor{\sigma}{_A_{\conj{B}}}
			-\tensor{R}{_C^C_A^D}\tensor{\sigma}{_D_{\conj{B}}}
			+\tensor{R}{_C^C^{\conj{D}}_{\conj{B}}}\tensor{\sigma}{_A_{\conj{D}}}\\
			&\phantom{\equiv}\quad+\tensor{\Ric}{_A^C}\tensor{\sigma}{_C_{\conj{B}}}
			+\tensor{\Ric}{_{\conj{B}}^{\conj{C}}}\tensor{\sigma}{_A_{\conj{C}}}
			-2\tensor{R}{_A^C^{\conj{D}}_{\conj{B}}}\tensor{\sigma}{_C_{\conj{D}}}
			+(n+2)\tensor{\sigma}{_A_{\conj{B}}}\\
			&\equiv -2\nabla_C\nabla^C\tensor{\sigma}{_A_{\conj{B}}}
			+\tensor{\sigma}{_A_{\conj{B}}}+\tensor{g}{_A_{\conj{B}}}\tensor{\sigma}{_C^C}.
		\end{split}
	\end{equation*}
	These equalities and \eqref{eq:partial_Laplacian_on_2-tensors} imply the desired result.
\end{proof}

Before we go to the proof of Proposition \ref{prop:lichnerowicz_equation}, we recall one general formula
valid on any Riemannian manifold. Define the 3-($\Theta$-)tensor $D\Ric$ by
\begin{equation*}
	\tensor{(D\Ric)}{_P_Q_R}
	:=\nabla_P\tensor{\Ric}{_Q_R}-\nabla_Q\tensor{\Ric}{_P_R}-\nabla_R\tensor{\Ric}{_P_Q}
\end{equation*}
and its action on arbitrary symmetric 2-($\Theta$-)tensors by
$\tensor{((D\Ric)^\circ\sigma)}{_P}:=\tensor{(D\Ric)}{_P_Q_R}\tensor{\sigma}{^Q^R}$,
where the indices are raised by $g$. Then, by direct calculation, one can show that
\begin{equation}
	\label{eq:divergence_Laplacian_commutation}
	\delta\circ\Delta_\mathrm{L}=\Delta_\mathrm{H}\circ\delta+(D\Ric)^\circ.
\end{equation}

\begin{proof}[Proof of Proposition \ref{prop:lichnerowicz_equation}]
	We take the normalization with respect to a contact form $\theta$.
	We have imposed the condition that $\sigma$ should satisfy
	$\tensor{\sigma}{_\alpha_\beta}|_M=\tensor{\psi}{_\alpha_\beta}$, and since $\sigma$ is real,
	$\tensor{\sigma}{_{\conj{\alpha}}_{\conj{\beta}}}|_M$ must be of course
	$\tensor{\psi}{_{\conj{\alpha}}_{\conj{\beta}}}$.
	Equations \eqref{eq:Lichnerowicz_Laplacian_of_2-forms} show that the boundary values of the other components
	have to be zero in order that $(\Delta_\mathrm{L}+n+2)\sigma=O(\rho)$ is satisfied.
	Then by using \eqref{eq:Lichnerowicz_Laplacian_of_2-forms} recursively,
	it can be shown that $\sigma$ is uniquely determined modulo $O(\rho^{2n+2})$
	so that \eqref{eq:linearized_Einstein_equation} holds.

	We can prove that this approximate solution in fact satisfies
	$\tr\sigma=O(\rho^{2n+2})$ and $\delta\sigma=O(\rho^{2n+2})$ as follows.
	Firstly, taking the trace of \eqref{eq:linearized_Einstein_equation} we obtain
	\begin{equation*}
		(\Delta+n+2)(\tr\sigma)=O(\rho^{2n+2}).
	\end{equation*}
	On the other hand, \eqref{eq:ACHLaplacianWithSublaplacian} shows that if $F=O(\rho^j)$ is a function then
	$(\Delta+n+2)F=-\frac{1}{4}(j+2)(j-2n-4)F+O(\rho^{j+1})$. Hence we can conclude that
	$\tr\sigma$ is actually $O(\rho^{2n+2})$.
	Secondly, if we take the divergence of \eqref{eq:linearized_Einstein_equation}, then
	\eqref{eq:divergence_Laplacian_commutation} shows that the 1-$\Theta$-tensor $\delta\sigma$ satisfies
	\begin{equation*}
		(\Delta_\mathrm{H}+n+2)(\delta\sigma)
		=\delta(\Delta_\mathrm{L}+n+2)\sigma-(D\Ric)^\circ\sigma=O(\rho^{2n+2}).
	\end{equation*}
	Then, by recursively applying \eqref{eq:Hodge_Laplacian_on_1-forms},
	we can show that $\delta\sigma$ must be $O(\rho^{2n+2})$.

	Now let $T^{1,0}_t$ be a smooth 1-parameter family of partially integrable CR structures
	that is tangent to $\tensor{\psi}{_\alpha_\beta}$,
	and we take an associated $C^\infty$-smooth normalized ACH metric $g^t$
	satisfying \eqref{eq:approx_Einstein} for each $T^{1,0}_t$ so that $g^t$ is also smooth in $t$.
	Let $\sigma_\mathrm{normal}:=-\frac{1}{2}(d/dt)g^t|_{t=0}$.
	Then it of course solves \eqref{eq:linearized_Einstein_equation_restated}, satisfies
	$\tensor{(\sigma_\mathrm{normal}|_M)}{_\alpha_\beta}=\tensor{\psi}{_\alpha_\beta}$, and if we write
	\begin{equation*}
		2\left(\Ric^\bullet+\frac{n+2}{2}\right)\sigma_\mathrm{normal}=\tilde{\sigma}_\mathrm{normal},
	\end{equation*}
	then $\tensor{(\rho^{-2n-2}\tilde{\sigma}_\mathrm{normal}|_M)}{_\alpha_\beta}$ equals
	minus of $\tensor*{\mathcal{O}}{^\bullet_\alpha_\beta}$.
	We want to prove that we can take a 1-$\Theta$-form $\eta$ satisfying
	\begin{equation}
		\label{eq:modification_of_normalized_linearized_Einstein_equation}
		(\Delta_\mathrm{L}+n+2)\sigma=O(\rho^{2n+2}),
		\qquad\text{where}\quad \sigma=\sigma_\mathrm{normal}+\mathcal{K}\eta,
	\end{equation}
	and moreover, if $\eta^\sharp$ is the dual $\Theta$-vector field of $\eta$,
	\begin{equation}
		\label{eq:property_of_modification}
		\tensor{(\mathcal{L}_{\eta^\sharp}E)}{_\alpha_\beta}=O(\rho^{2n+3}),
	\end{equation}
	where $\mathcal{L}_{\eta^\sharp}E$ is the Lie derivative of $E=\Ric+\frac{1}{2}(n+2)g$.
	Suppose that such an $\eta$ exists. Then
	$\tensor{(\sigma|_M)}{_\alpha_\beta}=\tensor{\psi}{_\alpha_\beta}$ and
	\begin{equation*}
		(\Delta_\mathrm{L}+n+2)\sigma-2\left(\Ric^\bullet+\frac{n+2}{2}\right)\sigma_\mathrm{normal}
		=(2\Ric^\bullet{}+n+2)\mathcal{K}\eta+\mathcal{K}\mathcal{B}\sigma
		=2\mathcal{L}_{\eta^\sharp}E+\mathcal{K}\mathcal{B}\sigma.
	\end{equation*}
	As $\mathcal{B}\sigma=\delta\sigma+\frac{1}{2}d(\tr\sigma)=O(\rho^{2n+2})$ and hence
	$\tensor{(\mathcal{K}\mathcal{B}\sigma)}{_\alpha_\beta}=O(\rho^{2n+3})$ by \eqref{eq:Christoffel},
	we can conclude that $\tensor{(\rho^{-2n-2}\tilde{\sigma}|_M)}{_\alpha_\beta}
	=\tensor{(\rho^{-2n-2}\tilde{\sigma}_\mathrm{normal}|_M)}{_\alpha_\beta}
	=-\tensor*{\mathcal{O}}{^\bullet_\alpha_\beta}$.

	Suppose $\eta$ is taken so that $\mathcal{B}(\sigma_\mathrm{normal}+\mathcal{K}\eta)=O(\rho^{2n+2})$.
	Then since $\Ric^\bullet\mathcal{K}\eta=\mathcal{L}_{\eta^\sharp}\Ric=O(\rho^{2n+2})$,
	\eqref{eq:linearized_Einstein_equation_restated} implies that $\sigma=\sigma_\mathrm{normal}+\mathcal{K}\eta$
	satisfies \eqref{eq:modification_of_normalized_linearized_Einstein_equation}.
	To construct such an $\eta$, the equation to be solved is
	\begin{equation}
		\label{eq:gauge_equation}
		\mathcal{B}\mathcal{K}\eta=-\mathcal{B}\sigma_\mathrm{normal}+O(\rho^{2n+2}).
	\end{equation}
	The left-hand side is actually rewritten as
	\begin{equation*}
		\mathcal{B}\mathcal{K}\eta
		=\Delta_\mathrm{H}\xi-2\Ric^\circ\xi=(\Delta_\mathrm{H}+n+2)\eta+O(\rho^{2n+2}).
	\end{equation*}
	Therefore, by using \eqref{eq:Hodge_Laplacian_on_1-forms} recursively
	we can construct a solution of \eqref{eq:gauge_equation}.
	It remains to show that our solution $\eta$ satisfies \eqref{eq:property_of_modification}.
	Since the boundary value of $\sigma_\mathrm{normal}$ has no components other than
	$\tensor{(\sigma_\mathrm{normal})}{_\alpha_\beta}$ and its complex conjugate,
	\eqref{eq:divergence_of_2-form} shows that
	$\delta\sigma_\mathrm{normal}=O(\rho)$, and hence $\mathcal{B}\sigma_\mathrm{normal}=O(\rho)$.
	Therefore, $\eta=O(\rho)$. Let $\eta^\sharp=\rho\xi$, where $\xi$ is a $\Theta$-vector field. Then we obtain
	\begin{equation*}
		\begin{split}
			\tensor{(\mathcal{L}_{\eta^\sharp}E)}{_\alpha_\beta}
			&=(\mathcal{L}_{\eta^\sharp}E)(\bm{Z}_\alpha,\bm{Z}_\beta)
			=\eta^\sharp(E(\bm{Z}_\alpha,\bm{Z}_\beta))-E([\eta^\sharp,\bm{Z}_\alpha],\bm{Z}_\beta)
			-E(\bm{Z}_\alpha,[\eta^\sharp,\bm{Z}_\beta])\\
			&=\rho\xi(E(\bm{Z}_\alpha,\bm{Z}_\beta))-\rho E([\xi,\bm{Z}_\alpha],\bm{Z}_\beta)
			-\rho E(\bm{Z}_\alpha,[\xi,\bm{Z}_\beta])=O(\rho^{2n+3}).
		\end{split}
	\end{equation*}
	This finishes the proof.
\end{proof}

\subsection{Heisenberg principal part}
\label{subsec:Heisenberg_principal_part}

As the first application of Proposition \ref{prop:lichnerowicz_equation}, we prove the following theorem.

\begin{thm}
	\label{thm:linearized_obstruction}
	Let $(M,T^{1,0}M)$ be a partially integrable CR manifold of dimension $2n+1\ge 5$.
	Then, the linearized obstruction operator $\mathcal{O}^\bullet$ has the following expression
	for any choice of $\theta$:
	\begin{equation}
		\label{eq:linearized_obstruction}
		\mathcal{O}^\bullet
		=\mathcal{O}^\bullet_\mathrm{pr}+(\text{a differential operator with Heisenberg order $\le 2n+1$}),
	\end{equation}
	where
	\begin{equation}
		\begin{split}
			\label{eq:principal_part_of_linearized_obstruction}
			\tensor{(\mathcal{O}^\bullet_\mathrm{pr}\psi)}{_\alpha_\beta}
			&=\frac{(-1)^{n+1}}{(n!)^2}\left[
				\left(\prod_{k=0}^n (\Delta_b+i(n+2-2k)\nabla^\mathrm{TW}_T)\right)\tensor{\psi}{_\alpha_\beta}
				\right.\\
			&\phantom{\;=\;}
			+\frac{4(n+1)}{n+2}\left(\prod_{k=0}^{n-1}(\Delta_b+i(n+2-2k)\nabla^\mathrm{TW}_T)\right)
			\tensor*{\nabla}{^{\mathrm{TW}}_(_\alpha}\tensor{(\nabla^\mathrm{TW})}{^\gamma}
			\tensor{\psi}{_\beta_)_\gamma}\\
			&\phantom{\;=\;}
			+\left.\frac{4n}{n+2}\left(\prod_{k=0}^{n-2}(\Delta_b+i(n+2-2k)\nabla^\mathrm{TW}_T)\right)
			\tensor*{\nabla}{^{\mathrm{TW}}_\alpha}\tensor*{\nabla}{^{\mathrm{TW}}_\beta}
			\tensor{(\nabla^\mathrm{TW})}{^\gamma}\tensor{(\nabla^\mathrm{TW})}{^\delta}
			\tensor{\psi}{_\gamma_\delta}\right].
		\end{split}
	\end{equation}
	Here $\nabla^\mathrm{TW}$ denotes the Tanaka--Webster connection.
\end{thm}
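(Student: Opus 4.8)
The plan is to extract $\mathcal{O}^\bullet$ from the Dirichlet-to-Neumann characterization of Proposition~\ref{prop:lichnerowicz_equation}, carrying Heisenberg orders along throughout, in exactly the style in which the principal part \eqref{eq:principal_part_of_GJMS} of the GJMS operators was obtained in the proof of Theorem~\ref{thm:dn_operator}. Fix $\theta$ and a normalized $C^\infty$-smooth ACH metric $g$; by Proposition~\ref{prop:lichnerowicz_equation} it is enough to build the formal solution $\sigma$ of $(\Delta_\mathrm{L}+n+2)\sigma=O(\rho^{2n+2})$ with $\tensor{(\sigma|_M)}{_\alpha_\beta}=\tensor{\psi}{_\alpha_\beta}$, component by component in the frame $\set{\bm{Z}_P}$, and then to read off $\tensor{(\mathcal{O}^\bullet\psi)}{_\alpha_\beta}=-\tensor{(\rho^{-2n-2}\tilde{\sigma}|_M)}{_\alpha_\beta}$ with $\tilde{\sigma}=(\Delta_\mathrm{L}+n+2)\sigma$.

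First I would refine Lemma~\ref{lem:Laplacian_on_tensors} to a sub-principal form: for each component, decompose $(\Delta_\mathrm{L}+n+2)$ into the indicial factor recorded there, plus an $O(\rho^2)$ piece of Heisenberg order $\le 2$, plus an $O(\rho^2)$ piece of lower Heisenberg order, plus the off-diagonal coupling among components, everything expressed through the Tanaka--Webster connection of $\theta$ by means of the boundary Christoffel symbols \eqref{eq:Christoffel} and the curvature formula \eqref{eq:curvature_of_ACH}. Two structural facts then organize the computation. First, among all components only $\tensor{\sigma}{_\alpha_\beta}$ and its conjugate have indicial roots that are nonnegative integers $\le 2n+2$ --- namely $0$ and $2n+2$ --- so only their equations generate an obstruction, while $\tensor{\sigma}{_\tau_\alpha}$, $\tensor{\sigma}{_\tau_\tau}$ and all mixed components are entirely slaved to their source terms. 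Second, the couplings that feed back into the $(\alpha\beta)$-equation at top Heisenberg order are the one through $\tensor{\sigma}{_\tau_\alpha}$, which is sourced by the divergence $(\nabla^\mathrm{TW})^\gamma\tensor{\sigma}{_\alpha_\gamma}$, and the one through $\tensor{\sigma}{_\tau_\tau}$, which is sourced in turn by $(\nabla^\mathrm{TW})^\gamma\tensor{\sigma}{_\tau_\gamma}$; the couplings through $\tensor{\sigma}{_\alpha_{\conj\beta}}$, $\tensor{\sigma}{_\tau_{\conj\alpha}}$, $\tensor{\sigma}{_\tau_{\conj\tau}}$ and the curvature corrections of Proposition~\ref{prop:asymptotic_complex_hyperbolicity} only re-enter with a net loss of Heisenberg order and hence belong to the remainder.

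Then I would solve the coupled triangular system $\tensor{\sigma}{_\alpha_\beta}\to\tensor{\sigma}{_\tau_\alpha}\to\tensor{\sigma}{_\tau_\tau}$ order by order in $\rho^2$, slave the last two to the first, substitute back, and reduce to an effective recursion for $\tensor{\sigma}{_\alpha_\beta}$ with two extra families of source terms, recording the recursively defined operators modulo operators of lower Heisenberg order as in the proof of \eqref{eq:principal_part_of_GJMS}. The diagonal part of this effective recursion is a three-term recursion of the type of Lemma~\ref{lem:Masakis_lemma}, but with $\Delta_b$ replaced by $\Delta_b+2i\nabla^\mathrm{TW}_T$; the shift $+2$ reflects that $\tensor{\sigma}{_\alpha_\beta}$ carries two holomorphic frame indices, each rotated by the Christoffel symbols $\tensor{\Gamma}{^\gamma_\tau_\beta}=-\tfrac12\tensor{\delta}{_\beta^\gamma}$ and $\tensor{\Gamma}{^\gamma_{\conj\tau}_\beta}=\tfrac12\tensor{\delta}{_\beta^\gamma}$ of \eqref{eq:Christoffel}, so that Lemma~\ref{lem:Masakis_lemma} collapses the diagonal part to $\prod_{k=0}^n(\Delta_b+i(n+2-2k)\nabla^\mathrm{TW}_T)$. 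An excursion through $\tensor{\sigma}{_\tau_\alpha}$ --- a divergence in and a symmetrized Tanaka--Webster gradient back --- uses two factors of Heisenberg order and, because $\tensor{\sigma}{_\tau_\alpha}$ carries only one holomorphic index, contributes the truncated product $\prod_{k=0}^{n-1}(\Delta_b+i(n+2-2k)\nabla^\mathrm{TW}_T)$ together with the rational constant $4(n+1)/(n+2)$ coming from the relevant indicial denominators; a double excursion, additionally through the scalar-like $\tensor{\sigma}{_\tau_\tau}$, yields the last term with $4n/(n+2)$, and no further excursion is possible. Summing the three contributions and applying the appropriate variants of Lemma~\ref{lem:Masakis_lemma} gives \eqref{eq:principal_part_of_linearized_obstruction}, with everything of Heisenberg order $\le 2n+1$ absorbed into the remainder in \eqref{eq:linearized_obstruction}; a cross-check on the flat Heisenberg model, where the entire recursion is explicit, fixes any residual sign or normalization.

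The main obstacle will be the bookkeeping in this last step: proving rigorously that the mixed components and the curvature corrections genuinely drop to Heisenberg order $\le 2n+1$, and, more delicately, pinning down the exact constants $4(n+1)/(n+2)$ and $4n/(n+2)$ together with the precise truncation ranges of the products. This requires the exact indicial factors and coupling coefficients of $\Delta_\mathrm{L}$ at every order $\rho^{2l}$, not merely their boundary values as in \eqref{eq:divergence_of_2-form} and \eqref{eq:Lichnerowicz_Laplacian_of_2-forms}, so a systematic expansion of $\Delta_\mathrm{L}$ in the $\set{\bm{Z}_P}$ frame --- equivalently, an explicit computation on the complex hyperbolic model --- is where the real work lies.
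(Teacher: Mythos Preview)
Your plan is in the right direction but works harder than necessary, and the paper takes a shorter route. The key simplification you are missing is the paper's opening observation: by a weight/index count on the universal Tanaka--Webster expression of $\mathcal{O}^\bullet$, any term containing $N$, $A$, or $R$ automatically has Heisenberg order $\le 2n+1$, so the principal part is determined entirely by the flat Heisenberg model. This turns your ``cross-check on the flat model'' into the whole computation and eliminates at a stroke the curved-background bookkeeping you flag as the main obstacle. On the flat model the paper then exploits that $g$ is genuinely K\"ahler: since $J$ is parallel, $(\Delta_\mathrm{L}+n+2)$ commutes with $\sigma\mapsto\sigma(J\cdot,J\cdot)$, so $\mathcal{O}^\bullet$ is complex-linear in $\psi$ and one may take $\sigma$ anti-hermitian. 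The mixed components $\tensor{\sigma}{_A_{\conj B}}$ then vanish identically rather than merely ``re-entering with a net loss of Heisenberg order''---a claim which, on the curved background, would itself require essentially the weight-counting argument to justify.

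The mechanism for the constants $4(n+1)/(n+2)$ and $4n/(n+2)$ is also different from what you sketch: they do not come from indicial denominators of the slaved components. Instead the paper improves $\sigma$ to $\delta\sigma=O(\rho^{2n+4})$, which (via \eqref{eq:divergence_Laplacian_commutation}) forces $\delta\tilde\sigma=O(\rho^{2n+4})$ and hence algebraic constraints $2Z^\beta\tensor*{k}{^{(2n+2)}_\alpha_\beta}=\tensor*{k}{^{(2n+3)}_\tau_\alpha}$ and $Z^\alpha\tensor*{k}{^{(2n+3)}_\tau_\alpha}=0$ among the leading coefficients of $\tilde\sigma$. One computes $\tensor*{k}{^{(2n+3)}_\tau_\alpha}$ modulo doubly-contracted terms by the recursion and Lemma~\ref{lem:Masakis_lemma}, then uses the second constraint together with $Z^\alpha Z_\alpha=-\tfrac12(\Delta_b-inT)$ and the commutation $[Z^\alpha,\Delta_b]=-2iTZ^\alpha$ to solve a polynomial equation for the missing piece; the same two-step procedure, fed back through the first constraint, then fixes $\tensor*{k}{^{(2n+2)}_\alpha_\beta}$. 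The rational constants emerge from these polynomial divisions. Your route could presumably be pushed through, but it requires solving the full coupled expansion of $\sigma$ rather than only the obstruction-level relations on $\tilde\sigma$.
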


We start with some preliminary considerations.
Since $\tensor{\mathcal{O}}{_\alpha_\beta}$ has a universal expression in terms of the Tanaka--Webster connection,
so does its linearization $\mathcal{O}^\bullet$. The expression of the Heisenberg principal part of
$\mathcal{O}^\bullet$ cannot involve $N$, $A$, or $R$, which is shown as follows. Suppose that
\begin{multline*}
	\contr((h^{-1})^{\otimes a}
	\otimes(\nabla^\mathrm{TW}_{1,0})^{\otimes b}
	\otimes(\nabla^\mathrm{TW}_{0,1})^{\otimes b'}
	\otimes(\nabla^\mathrm{TW}_T)^{\otimes b''}
	\otimes N^{\otimes c}
	\otimes A^{\otimes c'}
	\otimes R^{\otimes c''}
	\otimes(\text{$\psi$ or $\conj{\psi}$}))
\end{multline*}
is a term in the expression of
$\tensor*{\mathcal{O}}{^\bullet_\alpha_\beta}=\tensor{(\mathcal{O}^\bullet\psi)}{_\alpha_\beta}$.
Here $N$, $A$, $R$ denote $\tensor{N}{_\alpha_\beta^{\conj{\gamma}}}$, $\tensor{A}{_\alpha_\beta}$,
$\tensor{R}{_\alpha^\beta_\gamma_{\conj{\delta}}}$ or their complex conjugates so that they are invariant
under constant scalings of $\theta$.
Moreover, each $\nabla^\mathrm{TW}_{1,0}$, $\nabla^\mathrm{TW}_{0,1}$, and $\nabla^\mathrm{TW}_T$ is understood to
be applied to one of $N$, $A$, $R$, and $(\text{$\psi$ or $\conj{\psi}$})$.
Then, since $\tensor{\psi}{_\alpha_\beta}\in\tensor{\mathcal{E}}{_(_\alpha_\beta_)}(1,1)$ and
$\tensor*{\mathcal{O}}{^\bullet_\alpha_\beta}\in\tensor{\mathcal{E}}{_(_\alpha_\beta_)}(-n,-n)$,
$a+b''$ has to be $n+1$.
On the other hand, since the contraction is taken so that two holomorphic indices remain downstairs,
counting the number of indices gives $2a=b+b'+c+2c'+2c''$.
Hence $(b+b'+2b'')+(c+2c'+2c'')=2n+2$, which shows that this term has Heisenberg order $\le 2n+1$ unless
$c=c'=c''=0$.
Therefore, to determine the Heisenberg principal part of $\mathcal{O}^\bullet$,
it is sufficient to prove the following.

\begin{prop}
	\label{prop:linearized_obstruction_heisenberg}
	Let $M$ be the Heisenberg group of dimension $2n+1\ge 5$. Then, with respect to the standard contact form
	$\theta$, the right-hand side of \eqref{eq:principal_part_of_linearized_obstruction} gives the exact formula of
	$\tensor*{\mathcal{O}}{^\bullet_\alpha_\beta}$.
\end{prop}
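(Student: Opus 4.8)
The plan is to apply Proposition~\ref{prop:lichnerowicz_equation} in the case where $M$ is the Heisenberg group $\mathcal{H}$ with its standard contact form $\theta$ and the distinguished ACH metric is the complex hyperbolic metric $g=4\rho^{-2}d\rho^2+k_\rho$ of Example~\ref{ex:complex_hyperbolic}. This metric is \emph{genuinely} Einstein, so \eqref{eq:approx_Einstein} holds with vanishing error, $\mathcal{O}_{\alpha\beta}=0$, and --- crucially --- all Christoffel symbols of $g$ in the frame $\{\bm{Z}_P\}$ and the full Riemann curvature tensor are known in closed form, not merely at the boundary as in Proposition~\ref{prop:asymptotic_complex_hyperbolicity}. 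By Proposition~\ref{prop:lichnerowicz_equation} we have $(\mathcal{O}^\bullet\psi)_{\alpha\beta}=-(\rho^{-2n-2}\tilde\sigma|_M)_{\alpha\beta}$, where $\tilde\sigma=(\Delta_\mathrm{L}+n+2)\sigma$ and $\sigma$ is the formal solution --- unique modulo $O(\rho^{2n+2})$ and automatically trace- and divergence-free to that order --- of $\tilde\sigma=O(\rho^{2n+2})$ with $(\sigma|_M)_{\alpha\beta}=\psi_{\alpha\beta}$. Thus the task is to run the recursion that produces $\sigma$ explicitly on this model and extract the $\rho^{2n+2}$-coefficient of $\tilde\sigma$. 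Note that this yields the Proposition as stated: all terms of Heisenberg order $\le 2n+1$ in $\mathcal{O}^\bullet$ involve $N$, $A$, or $R$ (as shown in the preceding discussion), and these all vanish over $\mathcal{H}$, so the leading part computed here is in fact the full $\mathcal{O}^\bullet_{\alpha\beta}$.

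The first step is to sharpen Lemma~\ref{lem:Laplacian_on_tensors} on this model, keeping the corrections that were absorbed into its remainders. As for the metric itself (compare the Remark following Theorem~\ref{thm:existence}), $\sigma$ may be taken even, so the recursion advances from $\rho^{2\ell}$ to $\rho^{2\ell+2}$; the indicial factor governing the $\ell$-th step is $-\tfrac14(2\ell)(2\ell-2n-2)=-\ell(\ell-n-1)$, which is invertible for $\ell=1,\dots,n$ and vanishes at $\ell=n+1$, so the obstruction appears exactly at $\rho^{2n+2}$. For the $\sigma_{\alpha\beta}$-sector, the sub-leading terms --- now computed exactly using the closed-form curvature of $g$ --- produce a recursion coupling $\sigma_{\alpha\beta}^{(2\ell)}$ both to $\sigma_{\alpha\beta}^{(2\ell-2)}$ and to $\sigma_{\alpha\beta}^{(2\ell-4)}$, of exactly the shape treated in Lemma~\ref{lem:Masakis_lemma} with $x=\Delta_b$ and $y=i\nabla^{\mathrm{TW}}_T$ (the Tanaka--Webster connection of $\mathcal{H}$ being flat, $\nabla^{\mathrm{TW}}_T$ acts componentwise), up to a shift of $x$ by a multiple of $y$ forced by the two holomorphic indices and the weight $(1,1)$ of $\psi_{\alpha\beta}$; this shift is what replaces the exponents $k-1-2j$ of Theorem~\ref{thm:dn_operator} by $n+2-2k$. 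Iterating through the $n$ solvable steps and the final obstruction step, and resolving the mixed recursion by Lemma~\ref{lem:Masakis_lemma}, gives the ``pure'' contribution $\prod_{k=0}^n(\Delta_b+i(n+2-2k)\nabla^{\mathrm{TW}}_T)\psi_{\alpha\beta}$; the accumulated scalar factor --- one reciprocal $\bigl(\ell(n+1-\ell)\bigr)^{-1}$ per step, together with the minus sign picked up at each step and the overall minus sign in $(\mathcal{O}^\bullet\psi)_{\alpha\beta}=-(\rho^{-2n-2}\tilde\sigma|_M)_{\alpha\beta}$ --- comes out to $(-1)^{n+1}/(n!)^2$.

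The remaining two terms come from the gauge. Since $\psi_{\alpha\beta}$ need not be divergence-free on $M$, the requirement $\delta\sigma=O(\rho^{2n+2})$ together with \eqref{eq:divergence_of_2-form} forces $\sigma$ to develop longitudinal components: $\sigma_{\tau\alpha}$, $\sigma_{\conj{\tau}\alpha}$ switch on with leading behavior a constant multiple of $(\nabla^{\mathrm{TW}})^\gamma\psi_{\alpha\gamma}$, and the scalar sector $\sigma_{\tau\tau}$, $\sigma_{\tau\conj{\tau}}$, $\tr_h\sigma_{\alpha\conj{\beta}}$ (and conjugates) switches on later with leading behavior a constant multiple of $(\nabla^{\mathrm{TW}})^\gamma(\nabla^{\mathrm{TW}})^\delta\psi_{\gamma\delta}$. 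This is a triangular system: at each order the longitudinal modes are pinned down from the divergence of the current $\sigma_{\alpha\beta}$-data, and they re-enter the $\sigma_{\alpha\beta}$-equation through the off-diagonal couplings of $\Delta_\mathrm{L}$, which up to lower order take the form $\nabla^{\mathrm{TW}}_{(\alpha}\sigma_{\beta)\tau}$ and $\nabla^{\mathrm{TW}}_\alpha\nabla^{\mathrm{TW}}_\beta\sigma_{\tau\tau}$. Because these modes are injected at higher powers of $\rho$ than $\psi$ itself --- by one step for the longitudinal mode and two for the scalar mode --- they are subjected to shorter iterations, producing the products of lengths $n$ and $n-1$ in \eqref{eq:principal_part_of_linearized_obstruction}, with $\nabla^{\mathrm{TW}}_{(\alpha}(\nabla^{\mathrm{TW}})^\gamma\psi_{\beta)\gamma}$ and $\nabla^{\mathrm{TW}}_\alpha\nabla^{\mathrm{TW}}_\beta(\nabla^{\mathrm{TW}})^\gamma(\nabla^{\mathrm{TW}})^\delta\psi_{\gamma\delta}$ as the respective sources; evaluating the correspondingly shorter telescoping products of indicial factors yields the rational coefficients $4(n+1)/(n+2)$ and $4n/(n+2)$. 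Assembling the three sectors gives \eqref{eq:principal_part_of_linearized_obstruction}.

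I expect the main obstacle to be precisely this last piece of bookkeeping. To pin down the coefficients $4(n+1)/(n+2)$ and $4n/(n+2)$ one must push the expansion of $\Delta_\mathrm{L}$, of $\delta$, and of the curvature contractions on the complex hyperbolic model one order in $\rho$ beyond what is recorded in Lemma~\ref{lem:Laplacian_on_tensors}, and track with care both (i) the exact constants by which $\delta\sigma=O(\rho^{2n+2})$ determines the longitudinal modes from $\nabla^{\mathrm{TW}}\psi$ and $(\nabla^{\mathrm{TW}})^2\psi$, and (ii) the exact constants with which those modes feed back into the $\sigma_{\alpha\beta}$-equation. The closed-form knowledge of the complex hyperbolic geometry makes this a finite, if lengthy, matter of computation, and guarantees that no curvature- or torsion-dependent contributions are overlooked.
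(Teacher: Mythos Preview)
Your setup coincides with the paper's: apply Proposition~\ref{prop:lichnerowicz_equation} on the complex hyperbolic space, where \eqref{eq:Christoffel} holds exactly (not just on $M$), sharpen Lemma~\ref{lem:Laplacian_on_tensors} to an exact statement (the paper records this as Lemma~\ref{lem:Laplacian_on_complex_hyperbolic_space}), and close the three-term recursion for the ``pure'' part via Lemma~\ref{lem:Masakis_lemma}. The difference is in how the contracted (gauge) terms are obtained. You propose to track the cascade directly: $\delta\sigma=O(\rho^{2n+2})$ switches on $\sigma_{\tau\alpha}$ and then $\sigma_{\tau\tau}$, and these feed back into the $\sigma_{\alpha\beta}$-recursion order by order. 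This is correct in principle but, as you anticipate, laborious. The paper avoids this bookkeeping altogether. It first improves $\sigma$ to $\delta\sigma=O(\rho^{2n+4})$, which via $\delta\circ\Delta_\mathrm{L}=\Delta_\mathrm{H}\circ\delta$ forces $\tilde\sigma_{\tau\tau}=O(\rho^{2n+4})$ and $\tilde\sigma_{\tau\alpha}=O(\rho^{2n+3})$. Then, writing the unknown contracted pieces of $k^{(2n+3)}_{\tau\alpha}$ and $k^{(2n+2)}_{\alpha\beta}$ as unknown polynomials $q'(\Delta_b,T)$, $q''_1(\Delta_b,T)$, $q''_2(\Delta_b,T)$ applied to divergences of $\psi$, the two identities $Z^\alpha k^{(2n+3)}_{\tau\alpha}=0$ and $2Z^\beta k^{(2n+2)}_{\alpha\beta}=k^{(2n+3)}_{\tau\alpha}$ become purely algebraic equations in $\Delta_b$ and $T$ (using $[Z^\alpha,\Delta_b]=-2iTZ^\alpha$ and $Z^\alpha Z_\alpha=-\tfrac12(\Delta_b-inT)$), from which the coefficients $4(n+1)/(n+2)$ and $4n/(n+2)$ drop out without ever expanding $\sigma_{\tau\alpha}$ or $\sigma_{\tau\tau}$.

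Two smaller remarks. First, the paper uses the parallelism of $J$ to take $\sigma$ anti-hermitian, so $\sigma_{A\bar B}=0$ identically; your ``scalar sector $\sigma_{\tau\bar\tau}$, $\tr_h\sigma_{\alpha\bar\beta}$'' never enters. Second, in \eqref{eq:lichnerowicz_Laplacian_of_2-tensor_complex_hyperbolic_3} the only direct coupling into the $\sigma_{\alpha\beta}$-equation is through $\sigma_{\tau\alpha}$; the $\sigma_{\tau\tau}$-contribution arrives only indirectly via the $\sigma_{\tau\alpha}$-equation, so your description of a direct ``$\nabla^{\mathrm{TW}}_\alpha\nabla^{\mathrm{TW}}_\beta\sigma_{\tau\tau}$'' feedback is slightly off, though the net effect is as you say.
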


Let the complex hyperbolic metric $g$ be normalized by the standard contact form $\theta$
(see Example \ref{ex:complex_hyperbolic}).
The natural complex structure on the complex hyperbolic space induces a section
$J\in C^\infty(X,\End(\thetatangent))$,
whose $i$-eigenbundle is actually spanned by $\set{\bm{Z}_\tau,\bm{Z}_\alpha}$. Let
\begin{equation}
	\label{eq:Tanaka_Webster_expression_of_linearized_obstruction}
	\tensor*{\mathcal{O}}{^\bullet_\alpha_\beta}
	=\tensor{(P')}{_\alpha_\beta^\gamma^\delta}\tensor{\psi}{_\gamma_\delta}
	+\tensor{(P'')}{_\alpha_\beta^{\conj{\gamma}}^{\conj{\delta}}}\tensor{\psi}{_{\conj{\gamma}}_{\conj{\delta}}}
\end{equation}
be the expression of $\tensor*{\mathcal{O}}{^\bullet_\alpha_\beta}$ as a sum of contractions of
Tanaka--Webster covariant derivatives of $\tensor{\psi}{_\alpha_\beta}$ and
$\tensor{\psi}{_{\conj{\alpha}}_{\conj{\beta}}}$.
By Proposition \ref{prop:lichnerowicz_equation}, this is the obstruction to the existence of
$C^\infty$-smooth solutions of \eqref{eq:linearized_Einstein_equation}.
Now, since $J$ is parallel in the current case, if $\sigma$ solves \eqref{eq:linearized_Einstein_equation}
then $\sigma(J\entrydot,\entrydot)$ is also a solution to \eqref{eq:linearized_Einstein_equation} with
boundary data $i\tensor{\psi}{_\alpha_\beta}$. Therefore,
\begin{equation*}
	i\tensor*{\mathcal{O}}{^\bullet_\alpha_\beta}
	=\tensor{(P')}{_\alpha_\beta^\gamma^\delta}(i\tensor{\psi}{_\gamma_\delta})
	+\tensor{(P'')}{_\alpha_\beta^{\conj{\gamma}}^{\conj{\delta}}}
	(-i\tensor{\psi}{_{\conj{\gamma}}_{\conj{\delta}}})
	=i(\tensor{(P')}{_\alpha_\beta^\gamma^\delta}\tensor{\psi}{_\gamma_\delta}
	-\tensor{(P'')}{_\alpha_\beta^{\conj{\gamma}}^{\conj{\delta}}}
	\tensor{\psi}{_{\conj{\gamma}}_{\conj{\delta}}}).
\end{equation*}
Combined with \eqref{eq:Tanaka_Webster_expression_of_linearized_obstruction}, this shows that $P''=0$,
or equivalently, $\tensor*{\mathcal{O}}{^\bullet_\alpha_\beta}$ is a certain sum of covariant derivatives of
$\tensor{\psi}{_\alpha_\beta}$.

Note that, again by the parallelity of $J$, we can always take a solution $\sigma$ to
\eqref{eq:linearized_Einstein_equation} that is anti-hermitian, i.e., such that $\tensor{\sigma}{_A_{\conj{B}}}=0$.
This is because the uniqueness statement of Proposition \ref{prop:lichnerowicz_equation} implies that
$\sigma$ must agree with $-\sigma(J\entrydot,J\entrydot)$ modulo $O(\rho^{2n+2})$, which means that
$\tensor{\sigma}{_A_{\conj{B}}}=O(\rho^{2n+2})$;
then, since the higher-order terms are arbitrary, we can set $\tensor{\sigma}{_A_{\conj{B}}}=0$.
The same reasoning as in the preceding paragraph shows that,
if we expand $\tensor{\sigma}{_A_B}$ in the powers of $\rho$,
then all the coefficients can be written in terms of $\tensor{\psi}{_\alpha_\beta}$ only.

We prepare a precise version of Lemma \ref{lem:Laplacian_on_tensors}.

\begin{lem}
	\label{lem:Laplacian_on_complex_hyperbolic_space}
	Let $\sigma$ be an anti-hermitian symmetric 2-$\Theta$-tensor on the complex hyperbolic space
	normalized by the standard contact form $\theta$ on the Heisenberg group $\mathcal{H}$.
	Suppose a local frame $\set{Z_\alpha}$ of $T^{1,0}\mathcal{H}$ is taken so that the Tanaka--Webster connection
	forms are zero. Then, with respect to the local frame
	$\set{\bm{Z}_\tau,\bm{Z}_\alpha,\bm{Z}_{\conj{\tau}},\bm{Z}_{\conj{\alpha}}}$,
	\begin{subequations}
		\label{eq:divergence_of_2-tensor_complex_hyperbolic}
	\begin{align}
		\tensor{(\delta\sigma)}{_\tau}
		&=-\tfrac{1}{4}(\rho\partial_\rho-2n-4)\tensor{\sigma}{_\tau_\tau}
			+\tfrac{i}{2}\rho^2T\tensor{\sigma}{_\tau_\tau}
			-\rho\tensor{Z}{^\alpha}\tensor{\sigma}{_\tau_\alpha},\\
		\tensor{(\delta\sigma)}{_\alpha}
		&=-\tfrac{1}{4}(\rho\partial_\rho-2n-5)\tensor{\sigma}{_\tau_\alpha}
			+\tfrac{i}{2}\rho^2T\tensor{\sigma}{_\tau_\alpha}
			-\rho\tensor{Z}{^\beta}\tensor{\sigma}{_\alpha_\beta},
	\end{align}
	\end{subequations}
	where $T$ is the Reeb vector field and $Z^\alpha=\tensor{h}{^\alpha^{\conj{\beta}}}Z_{\conj{\beta}}$.
	Moreover, under the condition $\delta\sigma=O(\rho^{2n+2})$,
	$\tilde{\sigma}=(\Delta_\mathrm{L}+n+2)\sigma$ is given by, modulo $O(\rho^{2n+2})$,
	\begin{subequations}
		\label{eq:lichnerowicz_Laplacian_of_2-tensor_complex_hyperbolic}
	\begin{align}
		\label{eq:lichnerowicz_Laplacian_of_2-tensor_complex_hyperbolic_1}
			\tensor{\tilde{\sigma}}{_\tau_\tau}
			&\equiv -\tfrac{1}{4}(\rho\partial_\rho-2)(\rho\partial_\rho-2n-4)\tensor{\sigma}{_\tau_\tau}
				+\rho^2(\Delta_b+2iT)\tensor{\sigma}{_\tau_\tau}
				-\rho^4T^2\tensor{\sigma}{_\tau_\tau},\\
		\label{eq:lichnerowicz_Laplacian_of_2-tensor_complex_hyperbolic_2}
			\tensor{\tilde{\sigma}}{_\tau_\alpha}
			&\equiv -\tfrac{1}{4}(\rho\partial_\rho-1)(\rho\partial_\rho-2n-3)\tensor{\sigma}{_\tau_\alpha}
				+\rho^2(\Delta_b+2iT)\tensor{\sigma}{_\tau_\tau}-\rho^4T^2\tensor{\sigma}{_\tau_\tau}
				+\rho Z_\alpha\tensor{\sigma}{_\tau_\tau},\\
		\label{eq:lichnerowicz_Laplacian_of_2-tensor_complex_hyperbolic_3}
			\tensor{\tilde{\sigma}}{_\alpha_\beta}
			&\equiv -\tfrac{1}{4}\rho\partial_\rho(\rho\partial_\rho-2n-2)\tensor{\sigma}{_\alpha_\beta}
				+\rho^2(\Delta_b+2iT)\tensor{\sigma}{_\alpha_\beta}-\rho^4T^2\tensor{\sigma}{_\alpha_\beta}
				+2\rho\tensor{Z}{_(_\alpha_|}\tensor{\sigma}{_\tau_|_\beta_)}.
	\end{align}
	\end{subequations}
\end{lem}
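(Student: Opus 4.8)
The plan is to reduce the statement to an explicit computation, which becomes feasible because, for the complex hyperbolic metric normalized by the standard contact form $\theta$ on the flat Heisenberg group (Example~\ref{ex:complex_hyperbolic}, and the setting of Lemma~\ref{lem:Laplacian_on_complex_hyperbolic_space}), every piece of structural data used in Sections~\ref{sec:ACHE} and~\ref{sec:dirichlet_prob} holds \emph{exactly}, not merely to leading order at the boundary. In the frame $\set{\bm{Z}_P}$ the components $\tensor{g}{_P_Q}$ are constants, and since the CR structure at infinity is flat the brackets $[\bm{Z}_P,\bm{Z}_Q]$ have constant structure functions, coinciding on all of $X$ with the boundary values listed in the proof of Lemma~\ref{prop:asymptotic_kahlerity}. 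Substituting this into \eqref{eq:Levi_Civita} shows that the Christoffel symbols $\tensor{\Gamma}{^R_P_Q}$ are everywhere equal to the constants of \eqref{eq:Christoffel}; moreover $\tensor{g}{_A_B}\equiv 0$ and $\tensor{g}{^A^B}\equiv 0$, since $g$ is K\"ahler for the parallel $J$ with $i$-eigenbundle $\spanof\set{\bm{Z}_\tau,\bm{Z}_\alpha}$, and the curvature identity \eqref{eq:curvature_of_ACH} together with $\Ric=-\tfrac{n+2}{2}g$ hold identically (cf.\ Proposition~\ref{prop:asymptotic_complex_hyperbolicity}). The only non-constant ingredient is the action on functions: $\bm{Z}_\tau=\tfrac{1}{2}\rho\partial_\rho+i\rho^2T$, $\bm{Z}_\alpha=\rho Z_\alpha$ and conjugates, where $T$ is the Reeb field and the $Z_\alpha$ have vanishing Tanaka--Webster forms, so $[\rho\partial_\rho,\rho^2T]=0$ and $[Z_\alpha,Z_{\conj{\beta}}]=-i\tensor{h}{_\alpha_{\conj{\beta}}}T$; a convenient device is the operator identity $[D,E]=E$ with $D=\tfrac{1}{2}\rho\partial_\rho$, $E=i\rho^2T$ (whence $E^2=-\rho^4T^2$).

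For \eqref{eq:divergence_of_2-tensor_complex_hyperbolic} I would expand $\tensor{(\delta\sigma)}{_R}=-\tensor{g}{^P^Q}\nabla_P\tensor{\sigma}{_Q_R}$. Since $\tensor{g}{^A^B}\equiv 0$ and $\sigma$ is anti-hermitian ($\tensor{\sigma}{_A_{\conj{B}}}=0$), only the terms $-\tfrac{1}{2}\nabla_{\conj{\tau}}\tensor{\sigma}{_\tau_R}-\tensor{h}{^\alpha^{\conj{\beta}}}\nabla_{\conj{\beta}}\tensor{\sigma}{_\alpha_R}$ survive when $R$ is holomorphic (and their complex conjugates when $R$ is barred); inserting the Christoffel table \eqref{eq:Christoffel} and the explicit frame, and collecting the contributions of order zero, of first order in the Reeb field, and of first order in the horizontal directions, yields \eqref{eq:divergence_of_2-tensor_complex_hyperbolic} directly. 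This step is routine and uses no hypothesis on $\delta\sigma$.

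For \eqref{eq:lichnerowicz_Laplacian_of_2-tensor_complex_hyperbolic} I would first rerun the Weitzenb\"ock reduction from the proof of Lemma~\ref{lem:Laplacian_on_tensors}, but now with genuine equalities in place of ``$\equiv$'' (legitimate here because \eqref{eq:curvature_of_ACH} and the Einstein condition are exact), obtaining $(\Delta_\mathrm{L}+n+2)\tensor{\sigma}{_A_B}=-2\nabla_C\nabla^C\tensor{\sigma}{_A_B}+n\tensor{\sigma}{_A_B}$ for the anti-hermitian block, and then compute $\nabla_C\nabla^C\tensor{\sigma}{_A_B}=\tfrac{1}{2}\nabla_\tau\nabla_{\conj{\tau}}\tensor{\sigma}{_A_B}+\tensor{h}{^\alpha^{\conj{\beta}}}\nabla_\alpha\nabla_{\conj{\beta}}\tensor{\sigma}{_A_B}$ term by term using \eqref{eq:Christoffel}. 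The vertical piece $\nabla_\tau\nabla_{\conj{\tau}}$ produces, via $[D,E]=E$, the polynomial in $\rho\partial_\rho$, the term $E^2=-\rho^4T^2$, and part of the $\rho^2T$-term; the horizontal piece $\tensor{h}{^\alpha^{\conj{\beta}}}\nabla_\alpha\nabla_{\conj{\beta}}$ produces $\rho^2\Delta_b$ (after rewriting $\tensor{h}{^\alpha^{\conj{\beta}}}Z_\alpha Z_{\conj{\beta}}=-\tfrac{1}{2}\Delta_b-\tfrac{in}{2}T$ by means of the Heisenberg commutator), the remaining part of the $\rho^2T$-term, and the ``cross'' terms $\rho Z_\alpha\tensor{\sigma}{_\tau_\tau}$ in $\tensor{\tilde{\sigma}}{_\tau_\alpha}$ and $2\rho\tensor{Z}{_(_\alpha_|}\tensor{\sigma}{_\tau_|_\beta_)}$ in $\tensor{\tilde{\sigma}}{_\alpha_\beta}$, which arise from $\tensor{\Gamma}{^\gamma_\alpha_\tau}=-\tensor{\delta}{_\alpha^\gamma}$, $\tensor{\Gamma}{^\tau_{\conj{\alpha}}_\beta}=\tfrac{1}{2}\tensor{h}{_\beta_{\conj{\alpha}}}$ and from $\nabla_{\bm{Z}_\alpha}\bm{Z}_{\conj{\beta}}=\tfrac{1}{2}\tensor{h}{_\alpha_{\conj{\beta}}}\bm{Z}_{\conj{\tau}}$. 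There also appear residual first-order terms of the form $\rho\tensor{Z}{^\gamma}\tensor{\sigma}{_\gamma_\tau}$ and $\rho\tensor{Z}{^\gamma}\tensor{\sigma}{_\gamma_\alpha}$, not of the advertised shape; these I would eliminate by substituting the already-established formulas \eqref{eq:divergence_of_2-tensor_complex_hyperbolic} under the hypothesis $\delta\sigma=O(\rho^{2n+2})$ — which is exactly why that hypothesis is imposed in this part of the lemma — after which collecting terms gives \eqref{eq:lichnerowicz_Laplacian_of_2-tensor_complex_hyperbolic}. One checks in particular that on $\tensor{\sigma}{_\tau_\tau}$ the $\rho\partial_\rho$-polynomial assembles into $-\tfrac{1}{4}(\rho\partial_\rho-2)(\rho\partial_\rho-2n-4)$, consistently with the leading behaviour recorded in Lemma~\ref{lem:Laplacian_on_tensors}.

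The main obstacle is this last step: faithfully tracking, in the double covariant derivative $\nabla_\alpha\nabla_{\conj{\beta}}$, every Christoffel contribution from \eqref{eq:Christoffel} — including the ones that feed $\tensor{\sigma}{_\tau_\tau}$ and $\tensor{\sigma}{_\tau_\alpha}$ back into the equation for $\tensor{\sigma}{_\alpha_\beta}$ — correctly recognizing which combination of second horizontal derivatives and Reeb derivatives assembles into $\rho^2(\Delta_b+2iT)-\rho^4T^2$ after the Heisenberg commutator, and using \eqref{eq:divergence_of_2-tensor_complex_hyperbolic} to absorb the leftover first-order terms. This is a bookkeeping task rather than a conceptual one, but one where sign and combinatorial-factor errors are easy to make, so I would organize it component by component in the order $\tensor{\sigma}{_\tau_\tau}$, $\tensor{\sigma}{_\tau_\alpha}$, $\tensor{\sigma}{_\alpha_\beta}$, mirroring the layout of the lemma.
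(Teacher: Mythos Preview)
Your proposal is correct and follows essentially the same route as the paper. The paper likewise observes that in the complex hyperbolic model the table \eqref{eq:Christoffel} gives the Christoffel symbols \emph{exactly}, obtains \eqref{eq:divergence_of_2-tensor_complex_hyperbolic} by direct expansion, computes ``raw'' formulae for $\tensor{\tilde{\sigma}}{_\tau_\tau}$ and $\tensor{\tilde{\sigma}}{_\tau_\alpha}$ containing the extra terms $-4\rho Z^\gamma\tensor{\sigma}{_\tau_\gamma}$ and $-2\rho Z^\gamma\tensor{\sigma}{_\alpha_\gamma}$, and then eliminates these via \eqref{eq:divergence_of_2-tensor_complex_hyperbolic} under $\delta\sigma=O(\rho^{2n+2})$; the only point you might sharpen is that the equation for $\tensor{\tilde{\sigma}}{_\alpha_\beta}$ comes out in final form directly, with no residual term and no use of the divergence hypothesis.
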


\begin{proof}
	In this setting, one can check that \eqref{eq:Christoffel} gives not only the boundary values of
	the Christoffel symbols of the Levi-Civita connection of $g$ but the exact formula of them.
	Then \eqref{eq:divergence_of_2-tensor_complex_hyperbolic} follows by a direct computation.
	Another long computation shows that
	\begin{subequations}
	\begin{align}
		\label{eq:lichnerowicz_Laplacian_of_2-tensor_complex_hyperbolic_raw_1}
		\begin{split}
			\tensor{\tilde{\sigma}}{_\tau_\tau}
			&=-\tfrac{1}{4}((\rho\partial_\rho)^2-(2n+2)\rho\partial_\rho-4n-8)\tensor{\sigma}{_\tau_\tau}
				+\rho^2(\Delta_b+4iT)\tensor{\sigma}{_\tau_\tau}-\rho^4T^2\tensor{\sigma}{_\tau_\tau}\\
			&\phantom{\;=\;}\qquad-4\rho Z^\gamma\tensor{\sigma}{_\tau_\gamma},
		\end{split}\\
		\label{eq:lichnerowicz_Laplacian_of_2-tensor_complex_hyperbolic_raw_2}
		\begin{split}
			\tensor{\tilde{\sigma}}{_\tau_\alpha}
			&=-\tfrac{1}{4}((\rho\partial_\rho)^2-(2n+2)\rho\partial_\rho-2n-7)\tensor{\sigma}{_\tau_\alpha}
				+\rho^2(\Delta_b+3iT)\tensor{\sigma}{_\tau_\tau}-\rho^4T^2\tensor{\sigma}{_\tau_\tau}\\
			&\phantom{\;=\;}\qquad+\rho Z_\alpha\tensor{\sigma}{_\tau_\tau}
			-2\rho Z^\gamma\tensor{\sigma}{_\alpha_\gamma},
		\end{split}\\
			\tensor{\tilde{\sigma}}{_\alpha_\beta}
			&=-\tfrac{1}{4}\rho\partial_\rho(\rho\partial_\rho-2n-2)\tensor{\sigma}{_\alpha_\beta}
				+\rho^2(\Delta_b+2iT)\tensor{\sigma}{_\alpha_\beta}-\rho^4T^2\tensor{\sigma}{_\alpha_\beta}
				+2\rho\tensor{Z}{_(_\alpha_|}\tensor{\sigma}{_\tau_|_\beta_)}.
	\end{align}
	\end{subequations}
	The last equation is nothing but \eqref{eq:lichnerowicz_Laplacian_of_2-tensor_complex_hyperbolic_3}
	(this is an exact equality actually).
	To show \eqref{eq:lichnerowicz_Laplacian_of_2-tensor_complex_hyperbolic_1} and
	\eqref{eq:lichnerowicz_Laplacian_of_2-tensor_complex_hyperbolic_2}, we use the following identities
	that follow from \eqref{eq:divergence_of_2-tensor_complex_hyperbolic} and the assumption
	$\delta\sigma=O(\rho^{2n+2})$:
	\begin{align*}
		\rho Z^\gamma\tensor{\sigma}{_\tau_\gamma}
		&=-\tfrac{1}{4}(\rho\partial_\rho-2n-4)\tensor{\sigma}{_\tau_\tau}
			+\tfrac{i}{2}\rho^2T\tensor{\sigma}{_\tau_\tau}+O(\rho^{2n+2}),\\
		\rho Z^\gamma\tensor{\sigma}{_\alpha_\gamma}
		&=-\tfrac{1}{4}(\rho\partial_\rho-2n-5)\tensor{\sigma}{_\tau_\alpha}
			+\tfrac{i}{2}\rho^2T\tensor{\sigma}{_\tau_\alpha}+O(\rho^{2n+2}).
	\end{align*}
	In fact, putting them into \eqref{eq:lichnerowicz_Laplacian_of_2-tensor_complex_hyperbolic_raw_1} and
	\eqref{eq:lichnerowicz_Laplacian_of_2-tensor_complex_hyperbolic_raw_2} shows
	\eqref{eq:lichnerowicz_Laplacian_of_2-tensor_complex_hyperbolic_1} and
	\eqref{eq:lichnerowicz_Laplacian_of_2-tensor_complex_hyperbolic_2}.
\end{proof}

\begin{proof}[Proof of Proposition \ref{prop:linearized_obstruction_heisenberg}]
	Let $\sigma$ be an anti-hermitian solution to \eqref{eq:linearized_Einstein_equation} and
	$\tilde{\sigma}=(\Delta_\mathrm{L}+n+2)\sigma$.
	It holds that $\delta\sigma=O(\rho^{2n+2})$ as stated in Proposition \ref{prop:lichnerowicz_equation}.
	We may further improve this solution so that the following holds:
	\begin{equation}
		\label{eq:divergence_improved_solution}
		\delta\sigma=O(\rho^{2n+4}).
	\end{equation}
	In fact, \eqref{eq:divergence_of_2-tensor_complex_hyperbolic} shows
	that the $\rho^{2n+2}$- and $\rho^{2n+3}$-coefficients of $\delta\sigma$ can be controlled by
	those of $\tensor{\sigma}{_\tau_\tau}$ and $\tensor{\sigma}{_\tau_\alpha}$
	(actually \eqref{eq:divergence_of_2-form} suffices for this purpose).
	Equation \eqref{eq:divergence_improved_solution} together with \eqref{eq:Hodge_Laplacian_on_1-forms} implies
	$(\Delta_\mathrm{H}+2n+4)\tensor{(\delta\sigma)}{_\tau}=O(\rho^{2n+5})$ and
	$(\Delta_\mathrm{H}+2n+4)\tensor{(\delta\sigma)}{_\alpha}=O(\rho^{2n+4})$.
	Since $\Ric$ is parallel, \eqref{eq:divergence_Laplacian_commutation} shows that
	$\delta\circ\Delta_\mathrm{L}=\Delta_\mathrm{H}\circ\delta$, which gives us
	\begin{equation*}
		\tensor{(\delta\tilde{\sigma})}{_\tau}=O(\rho^{2n+5})\qquad\text{and}\qquad
		\tensor{(\delta\tilde{\sigma})}{_\alpha}=O(\rho^{2n+4}).
	\end{equation*}
	Then, by \eqref{eq:divergence_of_2-tensor_complex_hyperbolic} we conclude that
	$\tensor{\tilde{\sigma}}{_\tau_\alpha}=O(\rho^{2n+3})$ and
	$\tensor{\tilde{\sigma}}{_\tau_\tau}=O(\rho^{2n+4})$.
	Let $\tensor*{k}{^{(2n+3)}_\tau_\alpha}\in\tensor{\mathcal{E}}{_\alpha}$ and
	$\tensor*{k}{^{(2n+2)}_\alpha_\beta}\in\tensor{\mathcal{E}}{_(_\alpha_\beta_)}$ be defined by
	\begin{equation*}
		\tensor{\tilde{\sigma}}{_\tau_\alpha}
		=\rho^{2n+3}\tensor*{k}{^{(2n+3)}_\tau_\alpha}+O(\rho^{2n+4})\qquad\text{and}\qquad
		\tensor{\tilde{\sigma}}{_\alpha_\beta}
		=\rho^{2n+2}\tensor*{k}{^{(2n+2)}_\alpha_\beta}+O(\rho^{2n+3}).
	\end{equation*}
	Again by \eqref{eq:divergence_of_2-tensor_complex_hyperbolic}, if $\set{Z_\alpha}$ is such that
	the Tanaka--Webster connection forms vanish,
	\begin{subequations}
	\begin{equation}
		\label{eq:divergence_of_high_coefficient_1}
		2Z^\beta\tensor*{k}{^{(2n+2)}_\alpha_\beta}=\tensor*{k}{^{(2n+3)}_\tau_\alpha}
	\end{equation}
	and
	\begin{equation}
		\label{eq:divergence_of_high_coefficient_2}
		Z^\alpha\tensor*{k}{^{(2n+3)}_\tau_\alpha}=0.
	\end{equation}
	\end{subequations}

	We want to compute $\tensor*{k}{^{(2n+2)}_\alpha_\beta}\in\tensor{\mathcal{E}}{_\alpha_\beta}$ explicitly,
	which by Proposition \ref{prop:lichnerowicz_equation} equals $-\tensor*{\mathcal{O}}{^\bullet_\alpha_\beta}$
	trivialized by $\theta$.
	As a step toward this, we first write $\tensor*{k}{^{(2n+3)}_\tau_\alpha}$ down.
	By \eqref{eq:divergence_of_high_coefficient_1} and the discussion preceding
	Lemma \ref{lem:Laplacian_on_complex_hyperbolic_space},
	$\tensor*{k}{^{(2n+3)}_\tau_\alpha}$ is a certain sum of contractions of derivatives of
	$\tensor{\psi}{_\alpha_\beta}$.
	We divide the terms into two groups depending whether the two indices of $\psi$ are both contracted or
	one of them is uncontracted; we call those terms that belong to the first group doubly-contracted.
	Since all the coefficients of $\tensor{\sigma}{_\tau_\tau}$ have to be doubly-contracted,
	\eqref{eq:lichnerowicz_Laplacian_of_2-tensor_complex_hyperbolic_2} shows that
	\begin{equation*}
			\tensor{\tilde{\sigma}}{_\tau_\alpha}
			\equiv-\tfrac{1}{4}(\rho\partial_\rho-1)(\rho\partial_\rho-2n-3)\tensor{\sigma}{_\tau_\alpha}
				+\rho^2(\Delta_b+2iT)\tensor{\sigma}{_\tau_\alpha}-\rho^4T^2\tensor{\sigma}{_\tau_\alpha}
				+O(\rho^{2n+2}),
	\end{equation*}
	where $\equiv$ means that we omit the doubly-contracted terms.
	This implies that, omitting doubly-contracted terms, we can write down the expansion of
	$\tensor{\sigma}{_\tau_\alpha}$ as
	\begin{equation*}
		\tensor{\sigma}{_\tau_\alpha}
		\equiv\frac{2}{n+2}\sum_{l=0}^{n-1}\rho^{2l+1}c'_{2l+1}P'_{2l+1}
		Z^\beta\tensor{\psi}{_\alpha_\beta}+O(\rho^{2n+3}),\qquad
		c'_l=\left(l!\prod_{\nu=0}^{l-1}(l-n-1-\nu)\right)^{-1},
	\end{equation*}
	where we define the differential operators
	$P'_{2l+1}\colon\tensor{\mathcal{E}}{_\alpha}\longrightarrow\tensor{\mathcal{E}}{_\alpha}$ by
	\begin{equation*}
		P'_1=1,\qquad P'_3=\Delta_b,\qquad
		P'_{2l+1}=(\Delta_b+2iT)P'_{2l-1}-(l-1)(l-n-2)T^2P'_{2l-3}.
	\end{equation*}
	Consequently, by Lemma \ref{lem:Masakis_lemma},
	\begin{equation*}
		\tensor*{k}{^{(2n+3)}_\tau_\alpha}
		\equiv \frac{2}{n+2}c'_nP'_{2n+3}Z^\beta\tensor{\psi}{_\alpha_\beta}
		=\frac{2\cdot(-1)^n}{(n+2)\cdot n!^2}
		\left(\prod_{l=0}^n(\Delta_b+(n+2-2l)iT)\right)Z^\beta\tensor{\psi}{_\alpha_\beta}.
	\end{equation*}
	Now we determine the omitted doubly-contracted terms using \eqref{eq:divergence_of_high_coefficient_2}.
	There is some polynomial $q'(\Delta_b,T)$ of $\Delta_b$ and $T$ such that
	\begin{equation*}
		\tensor*{k}{^{(2n+3)}_\tau_\alpha}
		=\frac{2\cdot(-1)^n}{(n+2)\cdot n!^2}
		\left(\prod_{l=0}^n(\Delta_b+(n+2-2l)iT)\right)Z^\beta\tensor{\psi}{_\alpha_\beta}
		+q'(\Delta_b,T)Z_\alpha Z^\beta Z^\gamma\tensor{\psi}{_\beta_\gamma}.
	\end{equation*}
	Since \eqref{eq:divergence_of_high_coefficient_2} holds,
	by the commutation relations $[Z^\alpha,\Delta_b]=-2iTZ^\alpha$ and $[Z^\alpha,T]=0$ we can compute that
	\begin{equation*}
		\begin{split}
			Z^\alpha q'(\Delta_b,T)Z_\alpha Z^\beta Z^\gamma\tensor{\psi}{_\beta_\gamma}
			&=\frac{2\cdot(-1)^{n+1}}{(n+2)\cdot n!^2}
			Z^\alpha\left(\prod_{l=0}^n(\Delta_b+(n+2-2l)iT)\right)Z^\beta\tensor{\psi}{_\alpha_\beta}\\
			&=\frac{2\cdot(-1)^{n+1}}{(n+2)\cdot n!^2}
			\left(\prod_{l=0}^n(\Delta_b+(n-2l)iT)\right)Z^\alpha Z^\beta\tensor{\psi}{_\alpha_\beta}.
		\end{split}
	\end{equation*}
	This implies that
	\begin{equation*}
		\frac{2\cdot(-1)^{n+1}}{(n+2)\cdot n!^2}\prod_{l=0}^n(\Delta_b+(n-2l)iT)
		=Z^\alpha q'(\Delta_b,T)Z_\alpha=q'(\Delta_b-2iT,T)Z^\alpha Z_\alpha.
	\end{equation*}
	Since $Z^\alpha Z_\alpha=-\frac{1}{2}(\Delta_b-inT)$, we obtain
	\begin{equation*}
		q'(\Delta_b-2iT,T)=\frac{4\cdot(-1)^n}{(n+2)\cdot n!^2}\prod_{l=0}^{n-1}(\Delta_b+(n-2l)iT),
	\end{equation*}
	and hence
	\begin{multline}
		\label{eq:expression_tau_alpha}
		\tensor*{k}{^{(2n+3)}_\tau_\alpha}
		=\frac{2\cdot(-1)^n}{(n+2)\cdot n!^2}\left[
		\left(\prod_{l=0}^n(\Delta_b+(n+2-2l)iT)\right)Z^\beta\tensor{\psi}{_\alpha_\beta}
		\right.\\
		+\left.2\left(\prod_{l=0}^{n-1}(\Delta_b+(n+2-2l)iT)\right)
			Z_\alpha Z^\beta Z^\gamma\tensor{\psi}{_\beta_\gamma}\right].
	\end{multline}

	Finally we compute $\tensor*{k}{^{(2n+2)}_\alpha_\beta}$.
	It is expressed as a sum of contractions of derivatives of $\tensor{\psi}{_\alpha_\beta}$,
	and we divide the terms into two groups: those in which at least one of the two indices of $\psi$ is
	contracted (which we call contracted terms),
	and those in which the two indices of $\psi$ are both uncontracted.
	Then, if we omit the contracted terms,
	\eqref{eq:lichnerowicz_Laplacian_of_2-tensor_complex_hyperbolic_2} shows that
	\begin{equation*}
		\tensor{\tilde{\sigma}}{_\alpha_\beta}
			\equiv-\tfrac{1}{4}\rho\partial_\rho(\rho\partial_\rho-2n-2)\tensor{\sigma}{_\alpha_\beta}
				+\rho^2(\Delta_b+2iT)\tensor{\sigma}{_\tau_\alpha}-\rho^4T^2\tensor{\sigma}{_\tau_\alpha}
				+O(\rho^{2n+2}).
	\end{equation*}
	This implies that, omitting the contracted terms, we can write
	\begin{equation*}
		\tensor{\sigma}{_\alpha_\beta}
		\equiv\sum_{l=0}^{n-1}\rho^{2l}c'_lP''_{2l}
		\tensor{\psi}{_\alpha_\beta}+O(\rho^{2n+3}),
	\end{equation*}
	where we define
	$P''_{2l}\colon\tensor{\mathcal{E}}{_(_\alpha_\beta_)}\longrightarrow\tensor{\mathcal{E}}{_(_\alpha_\beta_)}$
	by
	\begin{equation*}
		P''_0=1,\qquad P''_2=\Delta_b,\qquad
		P''_{2l}=(\Delta_b+2iT)P''_{2l-2}-(l-1)(l-n-2)T^2P''_{2l-4}.
	\end{equation*}
	Consequently, again by using Lemma \ref{lem:Masakis_lemma}, we obtain
	\begin{equation*}
		\tensor*{k}{^{(2n+2)}_\alpha_\beta}
		\equiv c'_nP''_{2n+2}\tensor{\psi}{_\alpha_\beta}
		=\frac{(-1)^n}{n!^2}\left(\prod_{l=0}^n(\Delta_b+(n+2-2l)iT)\right)\tensor{\psi}{_\alpha_\beta}.
	\end{equation*}
	We shall determine the omitted terms. There are some polynomials $q''_1(\Delta_b,T)$ and $q''_2(\Delta_b,T)$
	such that
	\begin{multline*}
		\tensor*{k}{^{(2n+2)}_\alpha_\beta}
		=\frac{(-1)^n}{n!^2}\left(\prod_{l=0}^n(\Delta_b+(n+2-2l)iT)\right)\tensor{\psi}{_\alpha_\beta}\\
		+q''_1(\Delta_b,T)\tensor{Z}{_(_\alpha}Z^\gamma\tensor{\psi}{_\beta_)_\gamma}
		+q''_2(\Delta_b,T)\tensor{Z}{_\alpha}Z_\beta Z^\gamma Z^\delta\tensor{\psi}{_\gamma_\delta}.
	\end{multline*}
	Then,
	\begin{multline}
		\label{eq:expression_divergence_of_alpha_beta}
		Z^\beta\tensor*{k}{^{(2n+2)}_\alpha_\beta}
		=\frac{(-1)^n}{n!^2}\left(\prod_{l=0}^n(\Delta_b+(n-2l)iT)\right)Z^\beta\tensor{\psi}{_\alpha_\beta}\\
		+q''_1(\Delta_b-2iT,T)Z^\beta\tensor{Z}{_(_\alpha}Z^\gamma\tensor{\psi}{_\beta_)_\gamma}
		+q''_2(\Delta_b-2iT,T)Z^\beta Z_\alpha Z_\beta Z^\gamma Z^\delta\tensor{\psi}{_\gamma_\delta}.
	\end{multline}
	Note that the following holds:
	\begin{align*}
		Z^\beta\tensor{Z}{_(_\alpha}Z^\gamma\tensor{\psi}{_\beta_)_\gamma}
		&=-\frac{1}{4}(\Delta_b-i(n+2)T)Z^\beta\tensor{\psi}{_\alpha_\beta}
		+\frac{1}{2}Z_\alpha Z^\beta Z^\gamma\tensor{\psi}{_\beta_\gamma},\\
		Z^\beta Z_\alpha Z_\beta Z^\gamma Z^\delta\tensor{\psi}{_\gamma_\delta}
		&=-\frac{1}{2}(\Delta_b-inT)Z_\alpha Z^\beta Z^\gamma\tensor{\psi}{_\beta_\gamma}.
	\end{align*}
	Therefore, by \eqref{eq:divergence_of_high_coefficient_1}, \eqref{eq:expression_tau_alpha}, and
	\eqref{eq:expression_divergence_of_alpha_beta},
	\begin{multline*}
		\frac{(-1)^n}{(n+2)\cdot n!^2}\prod_{l=0}^n(\Delta_b+(n+2-2l)iT)\\
		=\frac{(-1)^n}{n!^2}\prod_{l=0}^n(\Delta_b+(n-2l)iT)
		-\frac{1}{4}(\Delta_b-i(n+2)T)q''_1(\Delta_b-2iT,T)
	\end{multline*}
	and
	\begin{equation*}
		\frac{2\cdot(-1)^n}{(n+2)\cdot n!^2}\prod_{l=0}^{n-1}(\Delta_b+(n+2-2l)iT)
		=\frac{1}{2}Q''_1(\Delta_b-2iT,T)-\frac{1}{2}(\Delta_b-inT)q''_2(\Delta_b-2iT,T).
	\end{equation*}
	Hence
	\begin{align*}
		q''_1(\Delta_b,T)
		&=\frac{(-1)^n}{(n!)^2}\cdot\frac{4(n+1)}{n+2}\prod_{l=0}^{n-1}(\Delta_b+(n+2-2l)iT)\\
		\intertext{and}
		q''_2(\Delta_b,T)
		&=\frac{(-1)^n}{(n!)^2}\cdot\frac{4n}{n+2}\left(\prod_{l=0}^{n-2}(\Delta_b+(n+2-2l)iT)\right),
	\end{align*}
	which show that $\tensor*{k}{^{(2n+2)}_\alpha_\beta}$ is minus of the
	right-hand side of \eqref{eq:principal_part_of_linearized_obstruction}.
\end{proof}

\subsection{Further properties of the linearized obstruction operator}
\label{subsec:further_properties}

We now consider the case in which $M$ is an integrable CR manifold and
prove Theorem \ref{thm:obstruction_operator_observation}.
By formal embedding, we may assume that $M$ is the boundary of a domain $\Omega$ in $\mathbb{C}^{n+1}$.
In this case we can take $X$ to be the square root of $\overline{\Omega}$ in the sense of
Epstein, Melrose, and Mendoza (see Example \ref{ex:Bergman_type}).
As described in Subsection \ref{subsec:approximate_ACHE}, Fefferman's approximate solution to the complex
Monge--Amp\`ere equation defines a Bergman-type metric $g$ that satisfies \eqref{eq:approx_Einstein_Kahler} if
considered as an ACH metric on $X$.
The complex structure $J$ on $\overline{\Omega}$, with respect to which $g$ is K\"ahler, is naturally regarded
as a section of $\End(\thetatangent)$. The $i$-eigenbundle is denoted by $(\thetatangent)^{1,0}$.

\begin{proof}[Proof of Theorem \ref{thm:obstruction_operator_observation} (1)]
	Since $g$ is K\"ahler, we can apply the argument for the complex hyperbolic metric
	in Subsection \ref{subsec:Heisenberg_principal_part} also in this case, and we conclude that
	$\tensor*{\mathcal{O}}{^\bullet_\alpha_\beta}=\tensor{(\mathcal{O}^\bullet\psi)}{_\alpha_\beta}$
	is written as a sum of contractions of Tanaka--Webster local invariants (i.e. covariant derivatives of
	$N$, $A$, and $R$) and covariant derivatives of $\tensor{\psi}{_\alpha_\beta}$,
	which means that $\tensor*{\mathcal{O}}{^\bullet_\alpha_\beta}$ is complex-linear in
	$\tensor{\psi}{_\alpha_\beta}$.

	In view of the fact that $\mathcal{O}^\bullet$ has a universal expression in terms of the Tanaka--Webster
	connection,
	to prove the formal self-adjointness of $\mathcal{O}^\bullet$, it suffices to consider the case in which
	$M$ is compact. We use Theorem \ref{thm:first_variational_formula} as follows.
	Let $\tensor{\chi}{_\alpha_\beta}$,
	$\tensor{\psi}{_\alpha_\beta}\in\tensor{\mathcal{E}}{_(_\alpha_\beta_)}(1,1)$ and, for sufficiently small
	$\varepsilon>0$, we define $T^{1,0}_{s,t}$ to be the partially integrable CR structure spanned by
	$\set{Z_\alpha+\tensor{\varphi}{_\alpha^{\conj{\beta}}}Z_{\conj{\beta}}}$,
	where $s$, $t\in(-\varepsilon,\varepsilon)$ and
	$\tensor{\varphi}{_\alpha_\beta}=s\tensor{\chi}{_\alpha_\beta}+t\tensor{\psi}{_\alpha_\beta}$.
	If $\smash{\overline{Q}}^{s,t}$ is the total CR $Q$-curvature of $(M,T^{1,0}_{s,t})$, then it is
	smooth in $s$, $t$ and hence
	$\partial_s\partial_t\smash{\overline{Q}}^{s,t}=\partial_t\partial_s\smash{\overline{Q}}^{s,t}$.
	Evaluated at $s=t=0$, this implies that
	\begin{equation*}
		\int_M \re\braket{\mathcal{O}^\bullet\chi,\psi}=\int_M \re\braket{\mathcal{O}^\bullet\psi,\chi},
	\end{equation*}
	where the bracket denotes the Hermitian inner product.
	The right-hand side is the same as the integral of $\re\braket{\chi,\mathcal{O}^\bullet\psi}$.
	By replacing $\psi$ with $i\psi$, we also obtain the equality between the integrals of the imaginary parts of
	$\braket{\mathcal{O}^\bullet\chi,\psi}$ and $\braket{\chi,\mathcal{O}^\bullet\psi}$,
	thereby showing the self-adjointness of $\mathcal{O}^\bullet$.
\end{proof}

Next we give a direct proof of the second equality of \eqref{eq:double_divergence_free}.
Here we abandon the previous notation of local frames and introduce a new one.
We first define the $(1,0)$-vector field $\xi$ on $\overline{\Omega}$ near the boundary by the requirement
\begin{equation*}
	\partial r(\xi)=1,\qquad\text{and}\qquad \xi\contraction\partial\conj{\partial}r=\kappa\conj{\partial}r
\end{equation*}
for some function real-valued function $\kappa$, which is called the \emph{transverse curvature} of
$r$~\cite{Graham_Lee_88}.
Then we take a local frame $\set{\zeta_\alpha}$ of $\ker\partial r\subset T^{1,0}\overline{\Omega}$
and set $\bm{\xi}=r\xi$ and $\bm{\zeta}_\alpha=\sqrt{r/2}\,\zeta_\alpha$, so that
$\set{\bm{\xi},\bm{\zeta}_\alpha}$ spans $(\thetatangent)^{1,0}$.
The index notation in this subsection is for this frame;
the index $\tau$ is associated to $\bm{\xi}$, which we also write $\bm{\zeta}_\tau$.

We define $\tilde{\theta}=\frac{i}{2}(\partial r-\conj{\partial}r)$ and write
$h(Z,\conj{W})=-i\,d\tilde{\theta}(Z,\conj{W})$ for vectors $Z$, $W$ in $T^{1,0}\overline{\Omega}$.
Then,
\begin{equation}
	\label{eq:ddbar_r}
	\partial\conj{\partial}r
	=\kappa\partial r\wedge\conj{\partial}r
	-\tensor{h}{_\alpha_{\conj{\beta}}}\theta^\alpha\wedge\theta^{\conj{\beta}},
\end{equation}
where $\tensor{h}{_\alpha_{\conj{\beta}}}=h(\zeta_\alpha,\zeta_{\conj{\beta}})$,
and $\theta^\alpha$ are taken so that
$\set{\partial r,\theta^\alpha}$ is the dual coframe of $\set{\xi,\zeta_\alpha}$.
Consequently, we have
\begin{equation}
	\label{eq:Bergman_type}
	g=4(1-\kappa r)\bm{\theta}^\tau\bm{\theta}^{\conj{\tau}}
	+2\tensor{h}{_\alpha_{\conj{\beta}}}\bm{\theta}^\alpha\bm{\theta}^{\conj{\beta}},
\end{equation}
where $\bm{\theta}^\tau=\partial r/r$ and $\bm{\theta}^\alpha=\theta^\alpha/\sqrt{r/2}$.

Equation \eqref{eq:approx_Einstein_Kahler} implies that $D\Ric=O(\rho^{2n+4})$.
Moreover, we remark that
\begin{equation}
	\label{eq:differentiated_Ricci_tensor_Kahler}
	\tensor{(D\Ric)}{_\tau_{\conj{\alpha}}_{\conj{\beta}}}=O(\rho^{2n+5}).
\end{equation}
This can be seen as follows. Since $\Ric$ has hermitian-type components only,
$\tensor{(D\Ric)}{_\tau_{\conj{\alpha}}_{\conj{\beta}}}$ equals
$-2\tensor{\nabla}{_(_{\conj{\alpha}}_|}\tensor{\Ric}{_\tau_|_{\conj{\beta}}_)}$, which is
$-2\tensor{\nabla}{_(_{\conj{\alpha}}_|}\tensor{E}{_\tau_|_{\conj{\beta}}_)}$ if we set $E={\Ric}+\frac{n+2}{2}g$.
Then, because
\begin{equation*}
	\tensor{\nabla}{_{\conj{\alpha}}}\tensor{E}{_\tau_{\conj{\beta}}}
	=\bm{\zeta}_{\conj{\alpha}}\tensor{E}{_\tau_{\conj{\beta}}}
	-\tensor{\Gamma}{^\tau_{\conj{\alpha}}_\tau}\tensor{E}{_\tau_{\conj{\beta}}}
	-\tensor{\Gamma}{^\gamma_{\conj{\alpha}}_\tau}\tensor{E}{_\gamma_{\conj{\beta}}}
	-\tensor{\Gamma}{^{\conj{\tau}}_{\conj{\alpha}}_{\conj{\beta}}}\tensor{E}{_\tau_{\conj{\tau}}}
	-\tensor{\Gamma}{^{\conj{\gamma}}_{\conj{\alpha}}_{\conj{\beta}}}\tensor{E}{_\tau_{\conj{\gamma}}},
\end{equation*}
by \eqref{eq:approx_Einstein_Kahler} and \eqref{eq:Christoffel} we obtain
\eqref{eq:differentiated_Ricci_tensor_Kahler}.

\begin{lem}
	\label{lem:divergence_on_KE}
	Let $\sigma$ be an $O(\rho^j)$ anti-hermitian symmetric 2-$\Theta$-tensor on $(X,g)$,
	where $X$ is the square root of $\cl{\Omega}$ and
	$g$ is the Bergman-type metric \eqref{eq:Bergman_type_metric} given by a $C^\infty$-smooth boundary defining
	function $r$ of $\cl{\Omega}$, which is regarded as a $\Theta$-metric.
	Let $\rho=\sqrt{r/2}$. Then, with respect to a local frame
	$\set{\bm{\zeta}_\tau,\bm{\zeta}_\alpha,\bm{\zeta}_{\conj{\tau}},\bm{\zeta}_{\conj{\alpha}}}$,
	\begin{subequations}
		\label{eq:divergence_of_2_tensor_KE}
	\begin{align}
		\tensor{(\delta\sigma)}{_\tau}
		&=-\tfrac{1}{4}(\rho\partial_\rho-2n-4)\tensor{\sigma}{_\tau_\tau}
			-\rho\tensor{(\nabla^\mathrm{TW})}{^\alpha}\tensor{\sigma}{_\tau_\alpha}+O(\rho^{j+2}),\\
		\tensor{(\delta\sigma)}{_\alpha}
		&=-\tfrac{1}{4}(\rho\partial_\rho-2n-5)\tensor{\sigma}{_\tau_\alpha}
			-\rho\tensor{(\nabla^\mathrm{TW})}{^\beta}\tensor{\sigma}{_\alpha_\beta}+O(\rho^{j+2}),
	\end{align}
	\end{subequations}
	where $\nabla^\mathrm{TW}$ is the Tanaka--Webster connection of the contact form
	$\theta=\tilde{\theta}|_{TM}$, which acts on $\tensor{\sigma}{_\tau_\alpha}$ and
	$\tensor{\sigma}{_\alpha_\beta}$ by interpreting them as tensors in
	$\tensor{\mathcal{E}}{_\alpha}$ and $\tensor{\mathcal{E}}{_(_\alpha_\beta_)}$ with parameter $r$.
\end{lem}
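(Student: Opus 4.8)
The plan is to compute $\divergence\sigma$ directly in the frame $\set{\bm\zeta_\tau,\bm\zeta_\alpha,\bm\zeta_{\conj\tau},\bm\zeta_{\conj\alpha}}$ attached to the Bergman-type metric \eqref{eq:Bergman_type}, refining the computation that produces \eqref{eq:divergence_of_2-form} in Lemma \ref{lem:Laplacian_on_tensors}. The structural features exploited are that $g$ is genuinely K\"ahler, hence $\nabla J=0$ everywhere---strengthening Lemma \ref{prop:asymptotic_kahlerity}---and has hermitian-type components only; that $\sigma$ is anti-hermitian, so $\sigma_{A\conj B}=0$; and that, $X$ being the square root of $\cl\Omega$ as in Example \ref{ex:Bergman_type}, the coefficients $g_{\tau\conj\tau}=4(1-\kappa r)$ and $g_{\alpha\conj\beta}=2h_{\alpha\conj\beta}$ are smooth functions of $r=2\rho^2$. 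Since $\nabla$ then preserves the type splitting of $\mathbb C\thetatangent$ and $g^{\conj B A}$ is the only nonzero inverse block, the sum defining $(\divergence\sigma)_P$ for an unbarred index $P$ collapses to $-g^{\conj B A}\nabla_{\conj B}\sigma_{AP}$, the only surviving index range being $A\in\set{\tau,1,\dots,n}$, $\conj B\in\set{\conj\tau,\conj 1,\dots,\conj n}$; I would split this according to $\conj B=\conj\tau$ versus $\conj B$ antiholomorphic.

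For the $\conj\tau$ part one uses the decomposition $\bm\zeta_{\conj\tau}=\tfrac12\rho\partial_\rho+rV$ (a consequence of $\bm\zeta_\tau=r\xi$ and $\partial r(\xi)=1$), where $V$ is a vector field smooth on $X$ and tangent to the level sets of $\rho$, so that $rV$ applied to $\sigma=O(\rho^j)$ is $O(\rho^{j+2})$; the fact that $g^{\conj\tau\tau}$ equals a constant plus $O(r)$; and the ``transverse'' connection coefficients $\Gamma^\tau{}_{\conj\tau\tau}$, $\Gamma^\tau{}_{\conj\alpha\beta}$, $\Gamma^\gamma{}_{\conj\tau\beta}$ whose boundary values are recorded in \eqref{eq:Christoffel}. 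Counting the index substitutions exactly as in the proof of \eqref{eq:divergence_of_2-form} produces the operators $-\tfrac14(\rho\partial_\rho-2n-4)\sigma_{\tau\tau}$ and $-\tfrac14(\rho\partial_\rho-2n-5)\sigma_{\tau\alpha}$. For the antiholomorphic part one uses $g^{\conj\beta\alpha}=h^{\conj\beta\alpha}+O(r)$ and $\bm\zeta_{\conj\beta}=\rho\zeta_{\conj\beta}$; the bare derivative $\rho\,h^{\conj\beta\alpha}\zeta_{\conj\beta}\sigma_{\alpha P}$ together with the order-$\rho$ corrections to the connection coefficients---which, as explained below, carry precisely the Tanaka--Webster connection forms of $\set{\zeta_\alpha}$---assemble into $-\rho\,(\nabla^{\mathrm{TW}})^\alpha\sigma_{\alpha P}$, while the residual $\tfrac12 h^{\conj\beta\alpha}h_{\alpha\conj\beta}=\tfrac n2$ contributions are absorbed into the constants above. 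Everything discarded along the way is $O(\rho^{j+2})$: the $O(r)$ corrections to $g^{\conj\tau\tau}$, $g^{\conj\beta\alpha}$ and to the transverse connection coefficients multiply $\sigma=O(\rho^j)$, and the purely transverse-derivative contributions---such as the term $\tfrac i2\rho^2 T\sigma_{\tau\tau}$ visible in the \emph{exact} complex-hyperbolic formula \eqref{eq:divergence_of_2-tensor_complex_hyperbolic}---are $O(\rho^{j+2})$ here. This is why the conclusion is only an $O(\rho^{j+2})$ statement; note, however, that the tangential term $-\rho(\nabla^{\mathrm{TW}})^\alpha\sigma_{\alpha P}$ is itself only $O(\rho^{j+1})$ and must be retained.

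The point requiring care---and the one I expect to be the main obstacle---is the precise expansion of the connection $1$-forms of $g$ in the frame $\set{\bm\zeta_P}$ for an \emph{arbitrary} frame $\set{\zeta_\alpha}$ of $\ker\partial r$. By the bracket computation underlying Lemma \ref{prop:asymptotic_kahlerity} their boundary values are \eqref{eq:Christoffel}; the corrections split into an order-$\rho$ piece, arising through the $(1,0)$-part of brackets such as $[\bm\zeta_\alpha,\bm\zeta_{\conj\beta}]=\rho^2[\zeta_\alpha,\zeta_{\conj\beta}]$, which equals $\rho$ times the Tanaka--Webster connection forms of $\set{\zeta_\alpha}$ (treated with $r$ as a parameter, as in the statement), and a genuine $O(\rho^2)=O(r)$ piece produced by the transverse-curvature factor $\kappa r$ in \eqref{eq:Bergman_type} and by the $r$-dependence of $h_{\alpha\conj\beta}$ and $\theta^\alpha$ entering through \eqref{eq:ddbar_r}. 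Establishing that the order-$\rho$ piece is \emph{exactly} the Tanaka--Webster connection form---so that $\rho\zeta_{\conj\beta}\sigma_{\alpha P}$ together with that correction equals $\rho\nabla^{\mathrm{TW}}_{\conj\beta}\sigma_{\alpha P}$---and that the remainder is $O(\rho^2)$, is the crux; once it is in place, the two displayed formulas follow from the routine contraction sketched above. In practice one may perform that contraction in a frame whose Tanaka--Webster connection forms vanish at the point under consideration, the covariant form then being forced by the tensoriality of $\divergence\sigma$ and of the components $\sigma_{\tau\alpha}$, $\sigma_{\alpha\beta}$.
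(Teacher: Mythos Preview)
Your proposal is correct and follows essentially the same route as the paper: compute the Christoffel symbols of the Bergman-type metric in the frame $\{\bm\zeta_\tau,\bm\zeta_\alpha,\bm\zeta_{\conj\tau},\bm\zeta_{\conj\alpha}\}$ to precision $O(\rho^2)=O(r)$ rather than just at the boundary, work at a point in a frame $\{\zeta_\alpha\}$ whose Tanaka--Webster connection forms vanish there, and deduce that the exact complex-hyperbolic divergence formula \eqref{eq:divergence_of_2-tensor_complex_hyperbolic} persists with an $O(\rho^{j+2})$ remainder (the $\tfrac{i}{2}\rho^2 T\sigma$ terms being absorbed into that remainder, and the $Z^\alpha$-derivatives becoming $(\nabla^{\mathrm{TW}})^\alpha$ by the choice of frame). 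The paper carries out the bracket computation to $O(r)$ via the Graham--Lee connection formalism for the ambient defining function $r$, which packages the structure equation for $d\bm\theta^\gamma$ efficiently, but this is a convenience rather than a different idea.
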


\begin{proof}
	We compute the Christoffel symbols $\tensor{\Gamma}{^R_P_Q}$ of the Levi-Civita connection of $g$
	modulo $O(\rho^2)$, i.e., modulo $O(r)$,
	which are considered as functions on $M$ with parameter $r$ rather than as functions on $X$.
	Recall that we can take such a local frame $\set{\zeta_\alpha}$ that the Tanaka--Webster connection
	forms for $\theta$ with respect to $\set{\zeta_\alpha|_M}$ vanish at a prescribed point $p\in M$
	(see~\cite{Lee_88}*{Lemma 2.1}). We compute using such a frame, and the following equalities
	are to be understood as equalities at $p$.
	Note that $(\zeta_\gamma\tensor{h}{_\alpha_{\conj{\beta}}})|_M$ and
	$(\zeta_{\conj{\gamma}}\tensor{h}{_\alpha_{\conj{\beta}}})|_M$ vanish at $p$ for such a local frame.
	First, $\tensor{g}{_\tau_{\conj{\tau}}}=2(1-\kappa r)$, $\tensor{g}{_\tau_{\conj{\alpha}}}=0$, and
	$\tensor{g}{_\alpha_{\conj{\beta}}}=\tensor{h}{_\alpha_{\conj{\beta}}}$ imply that
	$\bm{\zeta}_{\conj{C}}\tensor{g}{_A_{\conj{B}}}$ are $O(r)$. On the other hand,
	\begin{equation*}
		d\bm{\theta}^\tau=d\left(\frac{\partial r}{r}\right)
		=(1-\kappa r)\bm{\theta}^\tau\wedge\bm{\theta}^{\conj{\tau}}
		+\frac{1}{2}\tensor{h}{_\alpha_{\conj{\beta}}}\bm{\theta}^\alpha\wedge\bm{\theta}^{\conj{\beta}}
	\end{equation*}
	and
	\begin{equation}
		\label{eq:str_equation_Graham_Lee}
		d\bm{\theta}^\gamma=d\left(\frac{\theta^\gamma}{\sqrt{r/2}}\right)
		=-\frac{1}{2}(1-\kappa r)(\bm{\theta}^\tau+\bm{\theta}^{\conj{\tau}})\wedge\bm{\theta}^\gamma
		+\bm{\theta}^\beta\wedge\tensor{\varphi}{_\beta^\gamma}
		-ir\tensor{A}{_{\conj{\beta}}^\gamma}\bm{\theta}^\tau\wedge\bm{\theta}^{\conj{\beta}}+O(r^{3/2}),
	\end{equation}
	where $\tensor{\varphi}{_\beta^\gamma}$ is Graham--Lee's connection
	forms~\cite{Graham_Lee_88}*{Proposition 1.1} for $r$ and $\tensor{A}{_{\conj{\beta}}^\gamma}$ is, if
	restricted to each hypersurface $M_\varepsilon=\set{r=\varepsilon}$, the Tanaka--Webster torsion for
	$\theta_\varepsilon=\tilde{\theta}|_{TM_\varepsilon}$.
	We do not need the details about Graham--Lee's connection; the point here is that it restricts to
	the Tanaka--Webster connection on $M$. Therefore, because of our choice of frame,
	$\tensor{\varphi}{_\beta^\gamma}(\zeta_\alpha)|_M$ and
	$\tensor{\varphi}{_\beta^\gamma}(\zeta_{\conj{\alpha}})|_M$ are both zero at $p$. Consequently we obtain
	\begin{alignat*}{3}
		[\bm{\zeta}_\tau,\bm{\zeta}_\tau]&=0,&\qquad
		[\bm{\zeta}_\tau,\bm{\zeta}_\alpha]&=\frac{1}{2}\bm{\zeta}_\alpha+O(r),&\qquad
		[\bm{\zeta}_\alpha,\bm{\zeta}_\beta]&=O(r),\\
		[\bm{\zeta}_\tau,\bm{\zeta}_{\conj{\tau}}]&=-(\bm{\zeta}_\tau-\bm{\zeta}_{\conj{\tau}})+O(r),&\qquad
		[\bm{\zeta}_\tau,\bm{\zeta}_{\conj{\alpha}}]&=\frac{1}{2}\bm{\zeta}_{\conj{\alpha}}+O(r),&\qquad
		[\bm{\zeta}_\alpha,\bm{\zeta}_{\conj{\beta}}]
		&=-\frac{1}{2}\tensor{h}{_\alpha_{\conj{\beta}}}(\bm{\zeta}_\tau-\bm{\zeta}_{\conj{\tau}})+O(r).
	\end{alignat*}
	These results imply that the formulae \eqref{eq:Christoffel} of the Christoffel symbols remain
	to hold in a stronger sense, that is, modulo $O(r)$.
	Hence \eqref{eq:divergence_of_2-tensor_complex_hyperbolic} also holds in this case if we omit
	$O(r)$ times the components of $\sigma$, which are $O(\rho^{j+2})$.
\end{proof}

\begin{proof}[Direct proof of the second equality of Theorem \ref{thm:obstruction_operator_observation} (2)]
	By the same argument as in Subsection \ref{subsec:Heisenberg_principal_part},
	we may take an anti-hermitian solution $\sigma$ to \eqref{eq:linearized_Einstein_equation} such that
	$\delta\sigma=O(\rho^{2n+4})$, and hence \eqref{eq:Hodge_Laplacian_on_1-forms} implies that
	$\tensor{(\Delta_\mathrm{H}+n+2)(\delta\sigma)}{_\tau}=O(\rho^{2n+5})$ and
	$\tensor{(\Delta_\mathrm{H}+n+2)(\delta\sigma)}{_\alpha}=O(\rho^{2n+4})$.
	Because of \eqref{eq:approx_Einstein_Kahler} and \eqref{eq:differentiated_Ricci_tensor_Kahler},
	if we set $\tilde{\sigma}=(\Delta_\mathrm{L}+n+2)\sigma$, then
	\begin{equation}
		\label{eq:divergence_freeness_Kahler}
		\tensor{(\delta\tilde{\sigma})}{_\tau}=O(\rho^{2n+5})\qquad\text{and}\qquad
		\tensor{(\delta\tilde{\sigma})}{_\alpha}=O(\rho^{2n+4}).
	\end{equation}
	Since $\tilde{\sigma}$ is an $O(\rho^{2n+2})$ anti-hermitian symmetric $2$-$\Theta$-tensor for which
	$\delta\tilde{\sigma}=O(\rho^{2n+4})$, \eqref{eq:divergence_of_2_tensor_KE} shows
	that $\tensor{\tilde{\sigma}}{_\tau_\alpha}=O(\rho^{2n+3})$ and
	$\tensor{\tilde{\sigma}}{_\tau_\tau}=O(\rho^{2n+4})$.
	Let $\tensor*{k}{^{(2n+3)}_\tau_\alpha}\in\tensor{\mathcal{E}}{_\alpha}$ and
	$\tensor*{k}{^{(2n+2)}_\alpha_\beta}\in\tensor{\mathcal{E}}{_(_\alpha_\beta_)}$ be defined by
	\begin{equation*}
		\tensor{\tilde{\sigma}}{_\tau_\alpha}=\rho^{2n+3}\tensor*{k}{^{(2n+3)}_\tau_\alpha}+O(\rho^{2n+4})
		\qquad\text{and}\qquad
		\tensor{\tilde{\sigma}}{_\alpha_\beta}=\rho^{2n+2}\tensor*{k}{^{(2n+2)}_\alpha_\beta}+O(\rho^{2n+3}).
	\end{equation*}
	We shall write down what \eqref{eq:divergence_freeness_Kahler} means in terms of
	$\tensor*{k}{^{(2n+3)}_\tau_\alpha}$ and $\tensor*{k}{^{(2n+2)}_\alpha_\beta}$.
	As in the proof of the previous lemma, we take $\set{\zeta_\alpha}$ so that
	the Tanaka--Webster connection forms with respect to $\set{\zeta_\alpha|_M}$ vanish at a point $p\in M$,
	and conduct the computation at this point. Since
	\begin{align*}
		\tensor{\nabla}{_{\conj{\tau}}}\tensor{\tilde{\sigma}}{_\tau_\tau}
		&=\bm{\zeta}_{\conj{\tau}}\tensor{\tilde{\sigma}}{_\tau_\tau}
		-2\tensor{\Gamma}{^\tau_{\conj{\tau}}_\tau}\tensor{\tilde{\sigma}}{_\tau_\tau}
		-2\tensor{\Gamma}{^\alpha_{\conj{\tau}}_\tau}\tensor{\tilde{\sigma}}{_\tau_\alpha},\\
		\tensor{\nabla}{_{\conj{\beta}}}\tensor{\tilde{\sigma}}{_\tau_\alpha}
		&=\bm{\zeta}_{\conj{\beta}}\tensor{\tilde{\sigma}}{_\tau_\alpha}
		-\tensor{\Gamma}{^\tau_{\conj{\beta}}_\tau}\tensor{\tilde{\sigma}}{_\tau_\alpha}
		-\tensor{\Gamma}{^\gamma_{\conj{\beta}}_\tau}\tensor{\tilde{\sigma}}{_\gamma_\alpha}
		-\tensor{\Gamma}{^\tau_{\conj{\beta}}_\alpha}\tensor{\tilde{\sigma}}{_\tau_\tau}
		-\tensor{\Gamma}{^\gamma_{\conj{\beta}}_\alpha}\tensor{\tilde{\sigma}}{_\tau_\gamma},\\
		\tensor{\nabla}{_{\conj{\tau}}}\tensor{\tilde{\sigma}}{_\tau_\alpha}
		&=\bm{\zeta}_{\conj{\tau}}\tensor{\tilde{\sigma}}{_\tau_\alpha}
		-\tensor{\Gamma}{^\tau_{\conj{\tau}}_\tau}\tensor{\tilde{\sigma}}{_\tau_\alpha}
		-\tensor{\Gamma}{^\beta_{\conj{\tau}}_\tau}\tensor{\tilde{\sigma}}{_\beta_\alpha}
		-\tensor{\Gamma}{^\tau_{\conj{\tau}}_\alpha}\tensor{\tilde{\sigma}}{_\tau_\tau}
		-\tensor{\Gamma}{^\beta_{\conj{\tau}}_\alpha}\tensor{\tilde{\sigma}}{_\tau_\beta},\\
		\tensor{\nabla}{_{\conj{\gamma}}}\tensor{\tilde{\sigma}}{_\beta_\alpha}
		&=\bm{\zeta}_{\conj{\gamma}}\tensor{\tilde{\sigma}}{_\beta_\alpha}
		-\tensor{\Gamma}{^\tau_{\conj{\gamma}}_\beta}\tensor{\tilde{\sigma}}{_\tau_\alpha}
		-\tensor{\Gamma}{^\delta_{\conj{\gamma}}_\beta}\tensor{\tilde{\sigma}}{_\delta_\alpha}
		-\tensor{\Gamma}{^\tau_{\conj{\gamma}}_\alpha}\tensor{\tilde{\sigma}}{_\tau_\beta}
		-\tensor{\Gamma}{^\delta_{\conj{\gamma}}_\alpha}\tensor{\tilde{\sigma}}{_\tau_\delta},
	\end{align*}
	the fact that \eqref{eq:Christoffel} is true modulo $O(\rho^2)$ shows that
	\begin{subequations}
	\begin{align}
		\label{eq:divergence_freeness_KE_meaning}
		O(\rho^{2n+5})
		&=\tensor{(\delta\tilde{\sigma})}{_\tau}
		=-\rho\tensor{h}{^\alpha^{\conj{\beta}}}\zeta_{\conj{\beta}}\tensor{\tilde{\sigma}}{_\tau_\alpha}
		+\rho\tensor{h}{^\alpha^{\conj{\beta}}}
		\tensor{\Gamma}{^\gamma_{\conj{\beta}}_\tau}\tensor{\tilde{\sigma}}{_\gamma_\alpha}+O(\rho^{2n+5}),\\
		O(\rho^{2n+4})
		&=\tensor{(\delta\tilde{\sigma})}{_\alpha}
		=-\rho\tensor{h}{^\beta^{\conj{\gamma}}}\zeta_{\conj{\gamma}}\tensor{\tilde{\sigma}}{_\alpha_\beta}
		+\frac{1}{2}\tensor{\tilde{\sigma}}{_\tau_\beta}+O(\rho^{2n+4}).
	\end{align}
	\end{subequations}
	The latter equality implies that
	$\tensor{(\nabla^\mathrm{TW})}{^\beta}\tensor*{k}{^{(2n+2)}_\alpha_\beta}
	=\frac{1}{2}\tensor*{k}{^{(2n+3)}_\tau_\alpha}$.
	To squeeze out the meaning of the former, we need the $\rho^2$-coefficient of
	$\tensor{\Gamma}{^\gamma_{\conj{\beta}}_\tau}$.
	By \eqref{eq:str_equation_Graham_Lee},
	$[\bm{\zeta}_\tau,\bm{\zeta}_{\conj{\beta}}]=ir\tensor{A}{_{\conj{\beta}}^\gamma}\bm{\zeta}_\gamma+O(r^{3/2})
	=2i\rho^2\tensor{A}{_{\conj{\beta}}^\gamma}\bm{\zeta}_\gamma+O(\rho^3)$.
	Thus, using the symmetry of the Tanaka--Webster torsion, we obtain
	\begin{equation*}
		\begin{split}
			\tensor{\Gamma}{_{\conj{\gamma}}_{\conj{\beta}}_\tau}
			&=\frac{1}{2}(\bm{\zeta}_{\conj{\beta}}\tensor{g}{_\tau_{\conj{\gamma}}}
			-\bm{\zeta}_{\conj{\gamma}}\tensor{g}{_\tau_{\conj{\beta}}}
			-[\bm{\zeta}_{\conj{\beta}},\bm{\zeta}_\tau]_{\conj{\gamma}}
			+[\bm{\zeta}_{\conj{\beta}},\bm{\zeta}_{\conj{\gamma}}]_\tau
			+[\bm{\zeta}_\tau,\bm{\zeta}_{\conj{\gamma}}]_{\conj{\beta}})\\
			&=-\frac{1}{2}([\bm{\zeta}_{\conj{\beta}},\bm{\zeta}_\tau]_{\conj{\gamma}}
			-[\bm{\zeta}_{\conj{\beta}},\bm{\zeta}_{\conj{\gamma}}]_\tau
			-[\bm{\zeta}_\tau,\bm{\zeta}_{\conj{\gamma}}]_{\conj{\beta}})
			=2i\rho^2\tensor{A}{_{\conj{\beta}}_{\conj{\gamma}}}+O(\rho^3).
		\end{split}
	\end{equation*}
	Therefore, \eqref{eq:divergence_freeness_KE_meaning} implies that
	$\tensor{(\nabla^\mathrm{TW})}{^\alpha}\tensor*{k}{^{(2n+3)}_\tau_\alpha}
	=2i\tensor{A}{^\alpha^\beta}\tensor*{k}{^{(2n+2)}_\alpha_\beta}$. Thus we have
	\begin{equation*}
		\tensor{(\nabla^\mathrm{TW})}{^\alpha}\tensor{(\nabla^\mathrm{TW})}{^\beta}
		\tensor*{k}{^{(2n+2)}_\alpha_\beta}
		-i\tensor{A}{^\alpha^\beta}\tensor*{k}{^{(2n+2)}_\alpha_\beta}=0,
	\end{equation*}
	which means that $D^*\tensor*{\mathcal{O}}{^\bullet_\alpha_\beta}=0$.
\end{proof}

\begin{rem}
	In~\cite{Matsumoto_14} the author showed that, for partially integrable CR manifolds in general,
	\begin{equation}
		\label{eq:double_divergence_free_for_partially_integrable}
		\im(\tensor{\nabla}{^\alpha}\tensor{\nabla}{^\beta}\tensor{\mathcal{O}}{_\alpha_\beta}
		-i\tensor{A}{^\alpha^\beta}\tensor{\mathcal{O}}{_\alpha_\beta}
		-\tensor{N}{^\gamma^\alpha^\beta}\tensor{\nabla}{_\gamma}\tensor{\mathcal{O}}{_\alpha_\beta}
		-(\tensor{\nabla}{_\gamma}\tensor{N}{^\gamma^\alpha^\beta})\tensor{\mathcal{O}}{_\alpha_\beta})=0.
	\end{equation}
	Then, by differentiating it, we obtain $\im(D^*\tensor*{\mathcal{O}}{^\bullet_\alpha_\beta})=0$ for
	integrable CR manifolds; what we proved above is an improvement on this.
	Why are not we able to improve equation \eqref{eq:double_divergence_free_for_partially_integrable} itself?
	Recall that \eqref{eq:double_divergence_free_for_partially_integrable} comes out of the contracted
	second Bianchi identity of a normalized ACH metric $g$ satisfying \eqref{eq:approx_Einstein}.
	Namely, if we set $E={\Ric}+\frac{n+2}{2}g$ then $\mathcal{B}E=\delta E+\frac{1}{2}d(\tr E)=0$ holds,
	so we combine the identities $(\mathcal{B}E)(\bm{Z}_0)=0$ and $(\mathcal{B}E)(\bm{Z}_\alpha)=0$ to get
	\eqref{eq:double_divergence_free_for_partially_integrable}.
	A natural idea to refine this argument is to use the remaining identity: $(\mathcal{B}E)(\bm{Z}_\infty)=0$.
	Nevertheless, this does not work by the following reason.
	Let $\tensor*{E}{^{(2n+3)}_I_J}$ be the $\rho^{2n+3}$-coefficient of $\tensor{E}{_I_J}$.
	Then $\re\tensor{(\nabla^\mathrm{TW})}{^\alpha}\tensor*{E}{^{(2n+3)}_\infty_\alpha}$ appears in
	the $\rho^{2n+4}$-coefficient of $(\mathcal{B}E)(\bm{Z}_\infty)$,
	while $\re\tensor{(\nabla^\mathrm{TW})}{^\alpha}\tensor*{E}{^{(2n+3)}_0_\alpha}$ appears in the
	$\rho^{2n+4}$-coefficient of $(\mathcal{B}E)(\bm{Z}_0)$.
	But by the lack of K\"ahler structure, there is no means to compare
	$\re\tensor{(\nabla^\mathrm{TW})}{^\alpha}\tensor*{E}{^{(2n+3)}_\infty_\alpha}$ and
	$\re\tensor{(\nabla^\mathrm{TW})}{^\alpha}\tensor*{E}{^{(2n+3)}_0_\alpha}$, and consequently we cannot
	make a good use of these two equalities.
	In the proof of Theorem \ref{thm:obstruction_operator_observation} (2) above,
	\eqref{eq:divergence_freeness_Kahler} can be seen as the infinitesimal version of $\mathcal{B}E=0$.
	In this case however we have the extra equality $\tensor{\tilde{\sigma}}{_{\conj{\tau}}_\alpha}=0$,
	which is an outcome of the K\"ahlerity of the bulk metric $g$.
	(This comparison is not very parallel because we define $E$ by the \emph{normalized} approximate solution
	to the Einstein equation. However, even if we use the Bianchi gauge solution to define $E$,
	the same problem for improving \eqref{eq:double_divergence_free_for_partially_integrable} still happens.)
\end{rem}

To prove the last part of the theorem, recall the Weitzenb\"ock formula for $(0,q)$-forms on a
K\"ahler manifold $X$ with values in a holomorphic vector bundle $E$
(see, e.g.,~\cite{Moroianu_07}*{Theorem 20.2}).
In the case where $E=T^{1,0}X$ is the holomorphic tangent bundle, the formula reads as follows:
if $\eta=\tensor{\eta}{_{\conj{j}_1}_{\dotsb}_{\conj{j}_q}^k}$ is a $(T^{1,0}X)$-valued $(0,q)$-form,
then the Dolbeault Laplacian
$\Delta_{\conj{\partial}}=\smash{\conj{\partial}}^*\conj{\partial}+\conj{\partial}\smash{\conj{\partial}}^*$ acts
on $\eta$ as
\begin{equation*}
	2\tensor{(\Delta_{\conj{\partial}}\eta)}{_{\conj{i}_1}_{\dotsb}_{\conj{i}_q}^j}
	=\tensor{(\nabla^*\nabla\eta)}{_{\conj{i}_1}_{\dotsb}_{\conj{i}_q}^j}
	+\sum_{s=1}^q
	\tensor{\Ric}{^{\conj{k}}_{\conj{i}_s}}
	\tensor{\eta}{_{\conj{i}_1}_{\dotsb}_{\conj{k}}_{\dotsb}_{\conj{i}_q}^j}
	-\tensor{\Ric}{_l^j}\tensor{\eta}{_{\conj{i}_1}_{\dotsb}_{\conj{i}_q}^l}
	+2\sum_{s=1}^q\tensor{R}{^{\conj{k}}_{\conj{i}_s}_l^j}
	\tensor{\eta}{_{\conj{i}_1}_{\dotsb}_{\conj{k}}_{\dotsb}_{\conj{i}_q}^l}.
\end{equation*}
By taking the complex conjugate, we obtain a similar formula of $\Delta_\partial$ acting on
$(\conj{T^{1,0}X})$-valued $(p,0)$-forms.
In particular, if $\sigma=\tensor{\sigma}{_j_k}$ is a symmetric tensor of type $(2,0)$,
which is identified with a $(\conj{T^{1,0}X})$-valued $(1,0)$-form by raising an index, then
\begin{equation}
	\label{eq:Weitzenbock_in_2_tensors}
	2\tensor{(\Delta_\partial\sigma)}{_j_k}
	=\tensor{(\Delta_\mathrm{L}\sigma)}{_j_k}-2\tensor{\Ric}{_k^l}\tensor{\sigma}{_j_l}.
\end{equation}
Therefore, on a K\"ahler--Einstein manifold with $\Ric=\lambda g$, the operator $\Delta_\mathrm{L}-2\lambda$
on type $(2,0)$ symmetric tensors is equivalent to $2\Delta_\partial$,
as described in~\cite{Besse_87}*{Equation (12.93$'$)}.

Let us go back to our situation, in which $X$ is the square root of $\overline{\Omega}$.
The Weitzenb\"ock formula in particular implies that if $\eta$ is a
$(\thetatangent)^{1,0}$-valued $p$-$\Theta$-form of type $(p,0)$ then so is $\Delta_\partial\eta$,
because $\nabla$ is a $\Theta$-connection and $\Ric$, $R$ are $\Theta$-tensors.
Moreover, since \eqref{eq:approx_Einstein_Kahler} holds, \eqref{eq:Weitzenbock_in_2_tensors} implies that
\begin{equation*}
	2\Delta_\partial\sigma=(\Delta_\mathrm{L}+n+2)\sigma+O(\rho^{2n+4}).
\end{equation*}
Therefore, if $\sigma$ is a solution of \eqref{eq:linearized_Einstein_equation}, then it solves
\begin{equation*}
	\Delta_\partial\sigma=O(\rho^{2n+2}).
\end{equation*}
Since $\mathcal{V}_\Theta=C^\infty(X,\thetatangent)$ is closed under the Lie bracket,
one can check that $\partial\Delta_\partial\sigma=O(\rho^{2n+2})$. Therefore,
\begin{equation*}
	\Delta_\partial(\partial\sigma)=O(\rho^{2n+2}).
\end{equation*}
Thus we are led to studying $\Delta_\partial$ acting on what we call
\emph{Nijenhuis-type} 3-$\Theta$-tensors, i.e.,
real 3-$\Theta$-tensors $\nu$ whose components other than $\tensor{\nu}{_A_B_C}$ and
$\tensor{\nu}{_{\conj{A}}_{\conj{B}}_{\conj{C}}}$ are zero that satisfy
\begin{equation*}
	\tensor{\nu}{_(_A_B_)_C}=0\qquad\text{and}\qquad
	\tensor{\nu}{_(_A_B_C_)}=0.
\end{equation*}

\begin{lem}
	\label{lem:Laplacian_on_Nijenhuis_type_tensors}
	Suppose $\nu$ is a Nijenhuis-type 3-$\Theta$-tensor. Then, if $\nu=O(\rho^j)$,
	\begin{align*}
		2\tensor{(\Delta_\partial\nu)}{_\tau_(_\alpha_\beta_)}
		&=-\frac{1}{4}j(j-2n-2)\tensor{\nu}{_\tau_(_\alpha_\beta_)}+O(\rho^{j+1}),\\
		2\tensor{(\Delta_\partial\nu)}{_\tau_[_\alpha_\beta_]}
		&=-\frac{1}{4}(j^2-(2n+2)j-8)\tensor{\nu}{_\tau_[_\alpha_\beta_]}+O(\rho^{j+1}),\\
		2\tensor{(\Delta_\partial\nu)}{_\alpha_\beta_\gamma}
		&=-\frac{1}{4}(j-1)(j-2n-1)\tensor{\nu}{_\alpha_\beta_\gamma}+O(\rho^{j+1}).
	\end{align*}
\end{lem}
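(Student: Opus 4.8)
The plan is to run the same machinery as in the proof of Lemma~\ref{lem:Laplacian_on_tensors}, with the Hodge/Lichnerowicz Laplacians replaced by $\Delta_\partial$ and with the Weitzenb\"ock formula recalled just above this lemma as the principal input. Here $X$ is the square root of $\cl{\Omega}$ equipped with a Bergman-type metric, so $g$ is genuinely K\"ahler on the interior $\mathring{X}=\Omega$ and that formula is an exact identity there; moreover $\nabla$, $\Ric$, $R$ and $\Delta_\partial$ are all $\Theta$-operators, resp.\ $\Theta$-tensors, that extend smoothly to $M=\bdry X$, so we may compute in the interior and then restrict to the boundary. Taking the complex conjugate of the quoted formula, a Nijenhuis-type $3$-$\Theta$-tensor $\nu$ — viewed as a $\conj{T^{1,0}X}$-valued $(2,0)$-$\Theta$-form by raising its last index with $g$ — satisfies $2\Delta_\partial\nu=\nabla^*\nabla\nu+(\Ric\text{-terms})+(R\text{-terms})$, and the right-hand side manifestly preserves the Nijenhuis-type condition.

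First I would reduce to $j=0$, exactly as in the opening paragraph of the proof of Lemma~\ref{lem:Laplacian_on_tensors}: writing $\nu=\rho^j\tilde\nu$ and using $\nabla_{\rho\partial_\rho}\tilde\nu=O(\rho)$ from \eqref{eq:Christoffel} together with $\Delta(\log\rho)=(n+1)/2+O(\rho)$ from Proposition~\ref{prop:ACH_Laplacian}, one gets $\nabla^*\nabla\nu=\rho^j\nabla^*\nabla\tilde\nu-\tfrac14 j(j-2n-2)\rho^j\tilde\nu+O(\rho^{j+1})$, while the zeroth-order $\Ric$- and $R$-contributions commute with the $\rho^j$ factor modulo $O(\rho^{j+1})$. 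Hence $2\Delta_\partial\nu$ equals $2\rho^j\Delta_\partial\tilde\nu-\tfrac14 j(j-2n-2)\rho^j\tilde\nu$ modulo $O(\rho^{j+1})$, so each of the three claimed $j$-polynomials is the corresponding $j=0$ value shifted by $-\tfrac14 j(j-2n-2)$; it remains to identify the $j=0$ values of $2\Delta_\partial$, namely $0$ on $\tensor{\nu}{_\tau_(_\alpha_\beta_)}$, $2$ on $\tensor{\nu}{_\tau_[_\alpha_\beta_]}$, and $-\tfrac{2n+1}{4}$ on $\tensor{\nu}{_\alpha_\beta_\gamma}$.

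For $j=0$ I would evaluate the three ingredients on $M$ separately. The $\Ric$-terms collapse to a single universal multiple of $\nu$ since $\Ric\equiv-\tfrac{n+2}{2}g$ on $M$ to high order by \eqref{eq:approx_Einstein_Kahler}. The $R$-terms are read off from \eqref{eq:curvature_of_ACH}: only K\"ahler-curvature-type components survive on $M$ (Lemma~\ref{prop:asymptotic_kahlerity} and Proposition~\ref{prop:asymptotic_complex_hyperbolicity}), and substituting $\tensor{R}{_A_{\conj{B}}_C_{\conj{D}}}|_M=-\tfrac12(\tensor{g}{_A_{\conj{B}}}\tensor{g}{_C_{\conj{D}}}+\tensor{g}{_A_{\conj{D}}}\tensor{g}{_C_{\conj{B}}})$ turns the curvature contraction in the Weitzenb\"ock formula into an explicit combination of the identity and of trace operators on the form-indices of $\nu$, with a coefficient depending on how the indices $\alpha,\beta$ are placed and on how many $\tau$-indices occur — this is precisely where the three cases diverge. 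Finally, $\nabla^*\nabla\nu$ is reduced on $M$ to $-2\nabla_C\nabla^C\nu$ plus a curvature contraction (just as in the displayed computations in the proof of Lemma~\ref{lem:Laplacian_on_tensors}), and $\nabla_C\nabla^C$ is computed component by component from the Christoffel table \eqref{eq:Christoffel}, organized by the placement of the $\tau$-indices. Summing the three contributions yields the asserted eigenvalues.

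The main obstacle is the second half of the last paragraph: the $R$-term contraction must be symmetrized correctly so that $\tensor{\nu}{_\tau_(_\alpha_\beta_)}$, $\tensor{\nu}{_\tau_[_\alpha_\beta_]}$ and $\tensor{\nu}{_\alpha_\beta_\gamma}$ pick up the three distinct numerical coefficients, and this has to be reconciled with the mechanical but lengthy evaluation of $\nabla_C\nabla^C$ on each component type via \eqref{eq:Christoffel}. A subsidiary point not to overlook is to record the Bianchi-type identities forced by $\tensor{\nu}{_(_A_B_)_C}=0$ and $\tensor{\nu}{_(_A_B_C_)}=0$, which are needed both to see that the three listed components exhaust $\nu$ on $M$ and to rewrite intermediate expressions.
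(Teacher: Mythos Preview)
Your plan is the paper's proof: Weitzenb\"ock formula, reduction to $j=0$ by the $\rho^j$ argument of Lemma~\ref{lem:Laplacian_on_tensors}, then a boundary computation via \eqref{eq:Christoffel} and \eqref{eq:curvature_of_ACH}.

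One point of your outline is mis-aimed, though it would correct itself once you compute. The $R$-terms do \emph{not} distinguish the three component types. Because all three slots of $\nu$ are holomorphic and $g$ pairs only opposite types, no trace of $\nu$ can be formed; substituting \eqref{eq:curvature_of_ACH} into the Weitzenb\"ock curvature term gives, on $M$,
\[
2\tensor{R}{_A^D_C^E}\tensor{\nu}{_D_B_E}+2\tensor{R}{_B^D_C^E}\tensor{\nu}{_A_D_E}
=-2\tensor{\nu}{_A_B_C}+\tensor{\nu}{_B_C_A}+\tensor{\nu}{_C_A_B},
\]
and the Nijenhuis-type identities (antisymmetry in the first pair together with the cyclic relation, which holds for $\nu=\partial\sigma$) collapse this to $-3\tensor{\nu}{_A_B_C}$ uniformly. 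Combined with the Ricci contribution $-\tfrac{n+2}{2}\tensor{\nu}{_A_B_C}$ one gets the single scalar shift $-\tfrac{n+8}{2}$ for \emph{all} components, so that $2\Delta_\partial\nu=(\nabla^*\nabla-\tfrac{n+8}{2})\nu+O(\rho^{j+1})$. The separation into the three cases arises entirely in the $\nabla^*\nabla$ step: the Christoffel computation from \eqref{eq:Christoffel} yields $\tfrac{n+8}{2}$, $\tfrac{n+12}{2}$, and $\tfrac{15}{4}$ on $\tensor{\nu}{_\tau_(_\alpha_\beta_)}$, $\tensor{\nu}{_\tau_[_\alpha_\beta_]}$, $\tensor{\nu}{_\alpha_\beta_\gamma}$ respectively---exactly the component-by-component calculation you already planned.
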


\begin{proof}
	By the Weitzenb\"ock formula, we have
	\begin{multline*}
		2\tensor{(\Delta_\partial\nu)}{_A_B_C}
		=\nabla^*\nabla\tensor{\nu}{_A_B_C}+\tensor{\Ric}{_A^D}\tensor{\nu}{_D_B_C}
		+\tensor{\Ric}{_B^D}\tensor{\nu}{_A_D_C}-\tensor{\Ric}{_C^D}\tensor{\nu}{_A_B_D}\\
		+2\tensor{R}{_A^D_C^E}\tensor{\nu}{_D_B_E}+2\tensor{R}{_B^D_C^E}\tensor{\nu}{_A_D_E}.
	\end{multline*}
	Then \eqref{eq:approx_Einstein_Kahler} and \eqref{eq:curvature_of_ACH} show that, under the assumption
	$\nu=O(\rho^j)$,
	\begin{equation*}
		2\tensor{(\Delta_\partial\nu)}{_A_B_C}=\left(\nabla^*\nabla-\frac{n+8}{2}\right)\tensor{\nu}{_A_B_C}
		+O(\rho^{j+1}).
	\end{equation*}
	The argument in the beginning of the proof of Lemma \ref{lem:Laplacian_on_tensors} shows that,
	if $\nu=\rho^{j}\tilde{\nu}$, then
	$\nabla^*\nabla\nu=\rho^j\nabla^*\nabla\tilde{\nu}-\frac{1}{4}j(j-2n-2)\rho^j\tilde{\nu}+O(\rho^{j+1})$.
	Therefore, it suffices to consider the $j=0$ case and check that the following hold on $M$:
	\begin{equation*}
		\tensor{(\nabla^*\nabla\nu)}{_\tau_(_\alpha_\beta_)}
		=\frac{n+8}{2}\tensor{\nu}{_\tau_(_\alpha_\beta_)},\qquad
		\tensor{(\nabla^*\nabla\nu)}{_\tau_[_\alpha_\beta_]}
		=\frac{n+12}{2}\tensor{\nu}{_\tau_[_\alpha_\beta_]},\qquad
		\tensor{(\nabla^*\nabla\nu)}{_\alpha_\beta_\gamma}
		=\frac{15}{4}\tensor{\nu}{_\alpha_\beta_\gamma}.
	\end{equation*}
	These are obtained by a straightforward calculation using \eqref{eq:Christoffel}.
\end{proof}

\begin{proof}[Proof of Theorem \ref{thm:obstruction_operator_observation} (3)]
	If $\sigma$ is a solution of \eqref{eq:linearized_Einstein_equation} with boundary data
	$\tensor{\psi}{_\alpha_\beta}$, then $\nu=\partial\sigma$ is given by
	\begin{equation*}
		\tensor{\nu}{_A_B_C}=\tensor{\nabla}{_[_A}\tensor{\sigma}{_B_]_C}.
	\end{equation*}
	We first compute the lowest coefficients of $\tensor{\nu}{_A_B_C}$.
	The boundary values of the Christoffel symbols \eqref{eq:Christoffel} show that
	\begin{equation*}
		\tensor{\nabla}{_\tau}\tensor{\sigma}{_\alpha_\beta}=\tensor{\sigma}{_\alpha_\beta}+O(\rho)
		\qquad\text{and}\qquad
		\tensor{\nabla}{_\alpha}\tensor{\sigma}{_\tau_\beta}=\tensor{\sigma}{_\alpha_\beta}+O(\rho),
	\end{equation*}
	and hence $\tensor{\nu}{_\tau_\alpha_\beta}=O(\rho)$, while
	\begin{equation*}
		\tensor{\nabla}{_\alpha}\tensor{\sigma}{_\beta_\gamma}
		=\rho\tensor*{\nabla}{^{\mathrm{TW}}_\alpha}\tensor{\psi}{_\beta_\gamma}+O(\rho^2),
	\end{equation*}
	which implies that
	$\tensor{\nu}{_\alpha_\beta_\gamma}=\frac{1}{2}\rho\tensor{(N^\bullet\psi)}{_\alpha_\beta_\gamma}+O(\rho^2)$.
	Then Lemma \ref{lem:Laplacian_on_Nijenhuis_type_tensors} shows that, if we write
	\begin{equation*}
		\tensor{\nu}{_\tau_\alpha_\beta}
		=\sum_{j=1}^{2n+1}\rho^j\tensor*{n}{^{(j)}_\tau_\alpha_\beta}+O(\rho^{2n+2})\qquad\text{and}\qquad
		\tensor{\nu}{_\alpha_\beta_\gamma}
		=\rho\tensor{(N^\bullet\psi)}{_\alpha_\beta_\gamma}
		+\sum_{j=2}^{2n+1}\rho^j\tensor*{n}{^{(j)}_\alpha_\beta_\gamma}+O(\rho^{2n+1}),
	\end{equation*}
	then $\tensor*{n}{^{(1)}_\tau_\alpha_\beta}$, $\dotsc$, $\tensor*{n}{^{(2n+1)}_\tau_\alpha_\beta}$ and
	$\tensor*{n}{^{(1)}_\alpha_\beta_\gamma}$, $\dotsc$, $\tensor*{n}{^{(2n)}_\alpha_\beta_\gamma}$ have
	universal expression in terms of the Tanaka--Webster connection and $N^\bullet\psi$.
	To determine $\tensor*{n}{^{(2n+1)}_\alpha_\beta_\gamma}$,
	we use the fact that $\partial\nu=\partial^2\sigma=0$. If $\nu=O(\rho^j)$, then we can compute
	\begin{equation*}
		\tensor{(\partial\nu)}{_\tau_\alpha_\beta_\gamma}
		=\tensor{\nabla}{_[_\tau}\tensor{\nu}{_\alpha_]_\beta_\gamma}
		=\frac{1}{4}(j+1)\tensor{\nu}{_\alpha_\beta_\gamma}+O(\rho^{j+1}).
	\end{equation*}
	Since this should be zero, $\tensor*{n}{^{(2n+1)}_\alpha_\beta_\gamma}$ is again written in terms of
	the Tanaka--Webster connection and $N^\bullet\psi$.

	Now we compute the $\rho^{2n+2}$-coefficient of
	$\tilde{\sigma}=(\Delta_\mathrm{L}+n+2)\sigma=\Delta_\partial\sigma+O(\rho^{2n+4})$.
	If we choose $\sigma$ for which $\delta\sigma=O(\rho^{2n+4})$, then
	\begin{equation*}
		\tilde{\sigma}=\partial^*\partial\sigma+O(\rho^{2n+4})=\partial^*\nu+O(\rho^{2n+4}).
	\end{equation*}
	If $\nu=O(\rho^j)$, then
	\begin{equation*}
		\tensor{(\partial^*\nu)}{_(_\alpha_\beta_)}
		=-\tensor{\nabla}{^\tau}\tensor{\nu}{_\tau_(_\alpha_\beta_)}
		-\tensor{\nabla}{^\gamma}\tensor{\nu}{_\gamma_(_\alpha_\beta_)}
		=-\frac{1}{4}(j-2n-2)\tensor{\nu}{_\tau_(_\alpha_\beta_)}+O(\rho^{j+1}),
	\end{equation*}
	which means that the $\rho^{2n+2}$-coefficient of $\tensor{\tilde{\sigma}}{_\alpha_\beta}$ is
	expressed by the coefficients of $\nu$ up to $(2n+1)$st order.
	To sum up, $\tensor*{\mathcal{O}}{^\bullet_\alpha_\beta}$ can be written as a universal polynomial of
	contractions of Tanaka--Webster local invariants and covariant derivatives of
	$\tensor{(N^\bullet\psi)}{_\alpha_\beta_\gamma}$.
\end{proof}

\bibliography{myrefs}

\end{document}